\newcommand{\norm}[1]{\left\| #1 \right\|}  %Norm
\newcommand{\scprd}[1]{\left\langle #1 \right\rangle}  %Skalarprodukt
\renewcommand{\d}{\,\mathrm{d}} %Differenzial in Integralen
\newcommand{\N}{\mathbb{N}}  %Zahlbereiche
\newcommand{\Z}{\mathbb{Z}}
\newcommand{\Q}{\mathbb{Q}}
\newcommand{\R}{\mathbb{R}}
\newcommand{\supp}{\operatorname{supp}}
\DeclareMathOperator*{\esssup}{ess-sup}  %den Stern brauche ich, damit unter den Operator Grenzen geschrieben werden koennen (wie bei \lim oder \sup)
\DeclareMathOperator*{\essinf}{ess-inf}
\DeclareMathOperator*{\argmin}{argmin}
\newcommand{\rint}{\operatorname{r-int}}
\newcommand{\Av}{\operatorname{Av}}
\newcommand{\eps}{\varepsilon}
\renewcommand{\phi}{\varphi}
\newcommand{\ul}{\underline}
\newcommand{\ol}{\overline}
\numberwithin{equation}{section}
\newtheorem{thm}{Theorem}[section]
\newtheorem{cor}[thm]{Corollary}
\newtheorem{prop}[thm]{Proposition}
\newtheorem{lm}[thm]{Lemma}
\theoremstyle{definition} 
\theoremstyle{definition}
\title{Approximation, characterization, and continuity of multivariate monotonic regression functions}
\author{Jochen Schmid\\  
\small Fraunhofer Institute for Industrial Mathematics (ITWM), 67663 Kaiserslautern, Germany\\ 
\small jochen.schmid@itwm.fraunhofer.de}  
\date{}
\begin{document}

\maketitle

\begin{abstract}
\small{ \noindent 
We deal with monotonic regression of multivariate functions $f: Q \to \R$ on a compact rectangular domain $Q$ in $\R^d$, where monotonicity is understood in a generalized sense: as isotonicity in some coordinate directions and antitonicity in some other coordinate directions. As usual, the monotonic regression of a given function $f$ is the monotonic function $f^*$ that %among all monotonic functions 
has the smallest (weighted) mean-squared distance from $f$.
We establish %rigorously establish/develop
a simple general approach to %approximately 
compute monotonic regression functions: namely, we show that the monotonic regression $f^*$ of a given function $f$ can be approximated arbitrarily well -- with simple bounds on the approximation error in both the $2$-norm and the $\infty$-norm -- by the monotonic regression $f_n^*$ of grid-constant functions $f_n$. %and that $f_n^*$, in turn, can be computed using any of the well-known discrete monotonic regression algorithms. 
We also establish the continuity of the monotonic regression $f^*$ of a continuous function $f$ %with continuous weight $w$
along with an explicit averaging formula for $f^*$. 
And finally, we deal with generalized monotonic regression where the mean-squared distance from standard monotonic regression is replaced by more complex distance measures which arise, for instance, in maximum smoothed likelihood estimation. We will see that the solution of such generalized monotonic regression problems is simply given by the standard monotonic regression $f^*$. %coincides with the    
}
\end{abstract}

{ \small \noindent 
%\emph{AMS Subject Classification (2010):} 
%\\
Index terms:  isotonic regression, generalized isotonic regression, multivariate functions on continuous (non-discrete) domains, informed machine learning under monotonicity constraints
}

\section{Introduction}

When trying to learn an unknown functional relationship $Q \ni x \mapsto y(x) \in \R$ on some rectangular (cuboid) domain $Q$ in $\R^d$, one can often rely not only on experimental data but also on theoretical prior knowledge. %one can often rely on theoretical prior knowledge in addition to experimental data. An important example of such prior knowledge ... 
Such prior knowledge can consist, for instance, in expert knowledge about the monotonicity behavior of the unknown function $y$. It could be known, for instance, that $y$ is isotonic in some (possibly all or no) coordinate directions and antitonic in some other (possibly all or no) coordinate directions. %A bit more precisely, %In other words,
In slightly more precise and concise terms, it could be known that $y$ is monotonic with monotonicity signature $\sigma \in \{-1,0,1\}^d$, %(or $\sigma$-monotonic, for short), 
where $\sigma_i = \pm 1$ %for a given $i$
means that $y$ is isotonic or antitonic in the $i$th coordinate direction, respectively, whereas $\sigma_i =0$ means that nothing is known about the monotonicity of $y$ in the $i$th coordinate direction. %$y$ is of unknown monotonicity in the $i$th coordinate direction.
We will %also call such functions $\sigma$-monotonic for brevity. 
refer to such functions as $\sigma$-monotonic, for brevity.  
\smallskip

Incorporating such general monotonicity knowledge can be achieved, for instance, by first training an initial model $f$ based only on experimental data %purely data-based model $f$
and then monotonizing this initial model appropriately, that is, to replace it by an appropriate $\sigma$-monotonic model. 
A prominent way of monotonization is by rearrangement: here, one replaces the initial model $f$ by a suitably defined 
$\sigma$-monotonic rearrangement $r_{\sigma}(f)$ of $f$. See, for instance,~\cite{DeNePi06}, \cite{DeSc06}, \cite{ChFeGa09} for this rearrangement-based approach to monotonization.
Another even more prominent way of monotonization is by projection: here, one replaces the initial square-integrable model $f$ by the projection $p_{\sigma}(f)$ of $f$ onto the set 
\begin{align}
L^2_{\sigma}(Q,w\cdot\lambda, \R) := \{ g \in L^2(Q,w\cdot\lambda,\R): g \text{ has a $\sigma$-monotonic representative} \}
\end{align}
where $w$ is a weight function bounded above and below by positive finite constants and $\lambda$ is Lebesgue measure on $Q$. In other words, one replaces $f$ by the $\sigma$-monotonic square-integrable function $p_{\sigma}(f) = p_{\sigma}^w(f)$ that has the smallest weighted mean-squared distance from $f$, that is, %or, for short, 
\begin{align} \label{eq:def-p_sigma(f)-intro}
p_{\sigma}^w(f) = \argmin_{g \in L^2_{\sigma}(Q,w\cdot\lambda,\R)} \int_Q |f(x)-g(x)|^2 w(x)\d x.
\end{align}  
Commonly, the projection $p_{\sigma}^w(f)$ is also called the $\sigma$-monotonic regression (function) of $f$ with weight $w$ and, by extension, the underlying minimization problem for~\eqref{eq:def-p_sigma(f)-intro}  is referred to as monotonic regression (problem). See, for instance, \cite{BaBaBrBr}, \cite{RoWrDy}, \cite{Ma91}, \cite{MaMaTuWa01}, \cite{LiDu14} for this projection-based approach to monotonization. 
Alternative ways of incorporating monotonicity knowledge in models of various complexities are developed in~\cite{HaHu01}, \cite{RiVe10}, \cite{GuCo16}, and the references from Table~1 of~\cite{GuCo16}, for instance.
\smallskip

In this paper, we will be exclusively dealing with the monotonic regression of multivariate functions $f: Q \to \R$ on some compact rectangular (cuboid) domain %$Q$ in $\R^d$. 
\begin{align} \label{eq:Q}
Q := [a_1,b_1] \times \dotsb \times [a_d,b_d] %\subset \R^d
\end{align}
in $\R^d$
with $a_i < b_i$.  
%we will be dealing with the monotonization of multivariate functions $f: Q \to \R$ by projection. %we will be taking the projection-based approach to monotonization of multivariate functions $f: Q \to \R$. 
We will develop a simple general approach to compute monotonic regression functions $p_{\sigma}^w(f)$ and we will establish theoretical results on the characterization and continuity of $p_{\sigma}^w(f)$. As can be expected from the discrete case, treating multivariate functions is considerably more complex than treating univariate functions. 
In detail, the contents and contributions of the present paper can be described as follows.
\smallskip

Section~2 provides some general facts and definitions that will be used again and again throughout the text. 
In Section~3, we will show that the monotonic regression $p_{\sigma}^w(f)$  for grid-constant functions $f$ and $w$ is grid-constant too and determined %given 
by the corresponding discrete monotonic regression 
\begin{align} \label{eq:discrete-monotonization}
\argmin_{g \in \ell^2_{\sigma}(G,\R)} \sum_{x\in G} |f(x)-g(x)|^2 w(x),
\end{align}
where $\ell^2_{\sigma}(G,\R) := \{g \in \ell^2(G,\R): g \text{ is $\sigma$-monotonic}\}$ and $G$ is the grid on the cells of which %on whose cells 
$f$ and $w$ are constant. 
In order to compute~\eqref{eq:discrete-monotonization}, one can use any of the well-known discrete algorithms from the literature~\cite{BaBaBrBr} (Section~2.3), \cite{RoWrDy} (Section~1.4), \cite{MaMu85}, \cite{QiEd96}, \cite{HoQu03}, \cite{SpWaWi03}, \cite{St13}, \cite{St15}, \cite{KyRaSa15}. See, in particular, Table~1 of~\cite{St13} and of~\cite{KyRaSa15} as well as~\cite{St19} for a nice overview and a comparison of the computational complexity of various of these algorithms. 
\smallskip

In Section~4, we establish two approximation results saying that the monotonic regression $p_{\sigma}^w(f)$ can be approximated arbitarirly well %-- in the $2$-norm or the $\infty$-norm -- %depending on whether $f$ is only square-integrable or even essentially bounded --
by monotonic regression functions $p_{\sigma}^{w_n}(f_n)$ of suitable grid-constant functions $f_n$ and $w_n$. In case $f$ is merely square-integrable the approximation is w.r.t.~the $2$-norm, while it is w.r.t.~the $\infty$-norm in case $f$ is even essentially bounded. We also establish simple upper bounds on the approximation errors in both norms. Combining the results from Section~3 and~4, we obtain a simple general computational methodology that reduces the computation of monotonic regressions of functions $f$ and $w$ on the continuous (non-discrete) domain $Q$ to the computation of corresponding discrete monotonic regressions. Compared to the fairly different computational method from~\cite{LiDu14} our methodolgy has several advantages:
\begin{itemize}
\item it is simpler, both conceptionally and computationally
\item it is more informative, as it yields %contains 
information on the rate of convergence
\item it is more flexible, because for the computation of the discrete approximants one can choose any of the known discrete algorithms.   
\end{itemize} 

In Section~5, we deal with generalized monotonic regression where the mean-squared distance from standard monotonic regression~\eqref{eq:def-p_sigma(f)-intro} is replaced by more complex distance measures which arise, for instance, in maximum smoothed likelihood estimation. Specifically, generalized monotonic regression is the problem to find, for a given square-integrable function $f$ and a weight function $w$, the minimizer
\begin{align} \label{eq:generalized-mon-regr-intro}
\argmin_{g \in L^2_{\sigma}(Q,w\cdot\lambda,\R)} \int_Q \Delta_{\Phi}(f(x),g(x))w(x) \d x
\end{align}
with $\Delta_{\Phi}(u,v)$ being a generalized distance measure between $u,v \in \R$ based on some convex function $\Phi$ on $\R$. We will show that, in spite of the generally much more complex objective function in~\eqref{eq:generalized-mon-regr-intro}, the minimizer~\eqref{eq:generalized-mon-regr-intro} coincides with the minimizer $p_{\sigma}^w(f)$ of the standard monotonic regression problem~\eqref{eq:def-p_sigma(f)-intro}. In doing so, we generalize a well-known result from the discrete case~\cite{BaBaBrBr}, \cite{RoWrDy} as well as a result from the recent paper~\cite{GrJo10}. In that paper, the case of univariate and continuous functions $f$ and $w$ is considered, but the strategies of proof from~\cite{GrJo10} do not carry over to our case of multivariate functions. In our proof, we make essential use of our computational methodology from Section~3 and~4 (especially, the approximation result w.r.t.~the $2$-norm).
\smallskip

In Section~6, we finally deal with the special case of continuous functions $f$ and $w$. We establish the continuity of the monotonic regression function $p_{\sigma}^w(f)$ in that case -- along with an explicit averaging formula for $p_{\sigma}^w(f)$, which in the univariate special case reduces to the well-known formula
\begin{align}
p_{\sigma}^w(f)(x) = \inf_{u < x} \sup_{v > x} \bigg( \int_u^v f(t) w(t)\d t \bigg) \bigg/ \bigg( \int_u^v w(t)\d t \bigg) 
\qquad (x \in (a,b))
\end{align}
from~\cite{Ma91}, \cite{AnSo11}, \cite{MaMaTuWa01}. We thus generalize a result from~\cite{GrJo10} for univariate functions, but the multivariate case requires a completely different strategy of proof. In particular, in contrast to the univariate case, the continuity of $p_{\sigma}^w(f)$ can no longer -- at least not conveniently -- be %easily %conveniently 
inferred from the averaging formula in the multivariate case. In our proof, we make essential use of our computational methodology from Section~3 and~4 (especially, the approximation result w.r.t.~the $\infty$-norm along with a well-known discrete averaging formula). 
\smallskip

Apart from the specific notations and terminology explained in Section~2 and along the way in later sections, we will use the following general notational conventions. 
When speaking of monotonically increasing or decreasing functions, we will mean a monotonically non-decreasing or monotonically non-increasing function defined on a subset of $\R$. With $d$ we will always denote an arbitary non-negative integer and, unless explicitly stated otherwise, $\sigma$ will be a tuple from $\{-1,0,1\}^d$. With the symbol $Q$ we will always denote a cuboid set in $\R^d$ of the form~\eqref{eq:Q} and $\operatorname{int} E$ and $\rint F$ will denote the interior of a set $E\subset \R^d$ and the relative interior of a subset $F \subset Q$. Also, 
%$|\cdot|$ and $|\cdot|_{\infty}$ are the $2$-norm and the maximum norm on $\R^d$, respectively, while
%\begin{align}
%B_{\eps}(x) := \{ y \in \R^d: |y-x|_{\infty} < \eps \}.
%\end{align}
for $x \in \R^d$,
\begin{align}
B_{\eps}(x) := \{ y \in \R^d: |y-x|_{\infty} < \eps \}
\end{align}
is the $\eps$-ball around $x$ w.r.t.~the maximum norm $|\cdot|_{\infty}$ on $\R^d$. %where $|\cdot|_{\infty}$ is the maximum norm on $\R^d$. 
And finally, for a measurable subset $X$ of $\R^d$ %$\norm{\cdot}_q$, $\norm{\cdot}_{\infty}$ and $\norm{\cdot}_{\mathrm{sup}}$ denote, respectively, the $q$-norm, the essential supremum norm and the supremum norm on $X$,
and a function $f: X\to \R$, 
\begin{align}
\norm{f}_q := \bigg( \int_X |f(x)|^q \d x \bigg)^{1/q} \qquad (q \in [1,\infty))
\quad \text{and} \quad
\norm{f}_{\infty} := \esssup_{x \in X} |f(x)|,
\end{align}
%for $q \in [1,\infty)$, 
$\norm{f}_{\mathrm{sup}} := \sup_{x\in X} |f(x)|$, 
while $\chi_E$ stands for the characteristic function of a subset $E \subset X$, and $\mathcal{Q}_X$ denotes the Lebesgue sigma-algebra of $X$ (the more common symbol $\mathcal{L}$ is reserved for lower sets in this paper).

\section{Some preliminaries} %{Some general facts}

In this section, we collect the definitions and general facts we will need for our main results later on. We begin with the definition of isotonic and, in particular, $\sigma$-monotonic functions. 
Suppose $X$ is a set endowed with a partial order $\le$ (that is, a reflexive, antisymmetric, and transitive relation on $X$). A function $f:X \to \R$ is then called \emph{isotonic} iff %for all $x,y \in X$ with $x \le y$ one has $f(x) \le f(y)$. 
\begin{align} \label{eq:def-isotonic-fct}
\text{for all $x,y \in X$ with $x \le y$ one has $f(x) \le f(y)$.}
\end{align}
It is called \emph{antitonic} iff $-f$ is isotonic. 
We will occasionally need the following well-known and trivial %easily verified 
characterization of isotonic functions in terms of lower or upper sets. As usual, a subset $L$ or $U$ of $X$ is called a lower (upper) set on $X$ iff for all $x,y \in X$ with $x\le y$ the inclusion $y \in L$ ($x \in U$) implies $x \in L$ ($y \in U$). We will use the following short-hand notations: %write
\begin{align*}
\mathcal{L}_{\le}(X) := \{L: L \text{ is a lower set on } X\},
\qquad
\mathcal{U}_{\le}(X) := \{U: U \text{ is an upper set on } X\}.
\end{align*}

\begin{lm} \label{lm:char-isotonic-fcts}
Suppose $X$ is a set with a partial order $\le$. A function $f:X \to \R$ is then isotonic w.r.t.~$\le$ if and only if one of the following four conditions is satisfied:
\begin{multicols}{2} %\raggedcolumns
\begin{itemize}
\item[(i)] $f^{-1}((-\infty,c]) \in \mathcal{L}_{\le}(X)$ for all $c \in \R$
\item[(ii)] $f^{-1}((-\infty,c)) \in \mathcal{L}_{\le}(X)$ for all $c \in \R$
\item[(iii)] $f^{-1}([c,\infty)) \in \mathcal{U}_{\le}(X)$ for all $c \in \R$
\item[(iv)] $f^{-1}((c,\infty)) \in \mathcal{U}_{\le}(X)$ for all $c \in \R$.
\end{itemize}
\end{multicols}
\noindent Additionally, a characteristic function $\chi_U$ defined on $X$ is isotonic w.r.t.~$\le$ if and only if $U \in \mathcal{U}_{\le}(X)$. And finally, 
\begin{align}
\mathcal{U}_{\le}(X) = \{X\setminus L: L \in \mathcal{L}_{\le}(X) \}.
\end{align} 
\end{lm}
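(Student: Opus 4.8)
The plan is to prove the final (complement) identity first, since it lets the four level-set conditions be bundled into two dual pairs, and then to verify the equivalence with isotonicity directly from the definitions. Every statement here is elementary, so the work is essentially one of organizing the cases economically. I would begin with the identity $\mathcal{U}_{\le}(X) = \{X \setminus L : L \in \mathcal{L}_{\le}(X)\}$: the key observation is that ``$U$ is an upper set'' is just the contrapositive of ``$X \setminus U$ is a lower set'', because for $x \le y$ the implication $x \in U$ forcing $y \in U$ is logically the same as $y \in X \setminus U$ forcing $x \in X \setminus U$. Hence $U \mapsto X \setminus U$ is a bijection between $\mathcal{U}_{\le}(X)$ and $\mathcal{L}_{\le}(X)$, yielding the claimed equality.

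Next I would exploit the set identities $f^{-1}([c,\infty)) = X \setminus f^{-1}((-\infty,c))$ and $f^{-1}((c,\infty)) = X \setminus f^{-1}((-\infty,c])$. By the complement identity just proved, $f^{-1}([c,\infty))$ is an upper set if and only if $f^{-1}((-\infty,c))$ is a lower set, and likewise $f^{-1}((c,\infty))$ is an upper set if and only if $f^{-1}((-\infty,c])$ is a lower set. Since $c$ ranges over all of $\R$ in each condition, this shows that (iii) is equivalent to (ii) and (iv) is equivalent to (i) at once, so it remains only to relate (i) and (ii) to isotonicity.

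For the forward directions I would simply push a comparison $x \le y$ through $f$: if $y$ lies in the sublevel set $f^{-1}((-\infty,c])$ (respectively $f^{-1}((-\infty,c))$), then $f(x) \le f(y) \le c$ (respectively $f(x) \le f(y) < c$), so $x$ lies in it too, giving the lower-set property. For the converses the only mildly delicate point is the choice of threshold. Assuming (i), given $x \le y$ I take $c := f(y)$, so that $y \in f^{-1}((-\infty,c])$ and the lower-set property forces $f(x) \le c = f(y)$. Assuming (ii), I argue by contradiction: if $f(x) > f(y)$, then with $c := f(x)$ one has $y \in f^{-1}((-\infty,c))$ yet $x \notin f^{-1}((-\infty,c))$, contradicting the lower-set property. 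These parallel arguments complete the four-way equivalence.

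Finally, the characteristic-function statement follows either directly, since for $x \le y$ isotonicity of $\chi_U$ is exactly the implication that $x \in U$ forces $y \in U$, or as the special case $c \in [0,1)$ of condition (iv), because there $\chi_U^{-1}((c,\infty)) = U$. I anticipate no genuine obstacle: the whole argument is bookkeeping, and the only place demanding a moment's care is selecting the right cutoff $c$ (namely $f(y)$ or $f(x)$) in the converse directions for the non-strict and strict sublevel sets.
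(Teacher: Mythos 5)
Your proof is correct and complete. Note that the paper itself offers no proof of this lemma at all---it is introduced as a ``well-known and trivial'' characterization and stated without argument---so there is nothing to compare against on the paper's side; your write-up simply supplies the omitted verification. Your organization is efficient: proving the complement identity $\mathcal{U}_{\le}(X) = \{X\setminus L : L \in \mathcal{L}_{\le}(X)\}$ first, then using the set identities $f^{-1}([c,\infty)) = X \setminus f^{-1}((-\infty,c))$ and $f^{-1}((c,\infty)) = X \setminus f^{-1}((-\infty,c])$ to collapse (iii) and (iv) onto (ii) and (i), leaves only the two sublevel-set conditions to check against isotonicity, and your threshold choices ($c = f(y)$ for the closed sublevel sets, $c = f(x)$ with a contradiction for the open ones) are exactly right. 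The only cosmetic slip is the phrase ``special case $c \in [0,1)$ of condition (iv)'' for the characteristic-function claim: to invoke the equivalence (iv) $\Leftrightarrow$ isotonicity one needs (iv) for \emph{all} $c \in \R$, so one should add that for $c < 0$ and $c \ge 1$ the preimages are $X$ and $\emptyset$, which are trivially upper sets; but your direct argument for $\chi_U$ already covers this, so the proof stands without it.
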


In this paper, we will be dealing a lot with multivariate functions on subsets of $\R^d$ that are monotonic -- isotonic or antitonic -- in some coordinate directions and non-monotonic -- or, more precisely, not known to be monotonic -- in some other coordinate directions. %monotonically increasing or decreasing in some coordinate directions and 
Such a general monotonicity behavior of multivariate functions can be expressed as isotonicity w.r.t.~a suitable partial order on $\R^d$. Specifically, we will call a function $f: X \to \R$ with $X \subset \R^d$ \emph{$\sigma$-monotonic with (monotonicity) signature $\sigma \in \{-1,0,1\}^d$} iff it is isotonic w.r.t.~the partial order $\le_{\sigma}$ on $X$ defined in the following way: %$x \le_{\sigma} y$
\begin{align} \label{eq:def-le_sigma}
x \le_{\sigma} y \qquad \text{iff} \qquad \sigma_i x_i \le \sigma_i y_i \qquad (i \in \hat{I}) \qquad \text{and} \qquad x_i = y_i \qquad (i \in \dot{I}),
\end{align} 
where $\hat{I} := \{i \in \{1,\dots,d\}: \sigma_i = \pm 1\}$ and $\dot{I} := \{ i \in \{1,\dots,d\}: \sigma_i = 0\}$. 
In all our results, $X$ will be a rectangular set in $\R^d$ minus, possibly, some null set. %If $X = S_1\times \dotsb S_d$ is genuinely rectangular (with no null set subtracted), then a function $f$ on $X$ is $\sigma$-monotonic if and only if 
As is easily verified, a multivariate function $f$ on a rectangular set $X = S_1 \times \dots \times S_d$ (with no null set subtracted) is $\sigma$-monotonic if and only if for every coordinate direction $i \in \{1,\dots,d\}$  the univariate functions
\begin{align*}
S_i \ni \xi \mapsto f(x_1, \dots, x_{i-1}, \xi, x_{i+1}, \dots, x_d)
\end{align*}
are $\sigma_i$-monotonic for all fixed $x_j \in S_j$ with $j \ne i$, where a $(\pm 1)$-monotonic univariate function is just a monotonically increasing or decreasing function, respectively, and a $0$-monotonic univariate function is an arbitrary function (no monotonicity imposed). %where $(\pm 1)$-monotonicity is nothing but isotonicity or antitonicity, respectively, and $0$-monotonicity imposes no monotonicity constraint.  

\subsection{Case of general measure-space domains}%{Case of general measure spaces}

We now consider the case where the set $X$, in addition to a partial order $\le$, is endowed with a complete measure $\mu$. In this case, we call a function $f:X \to \R$ \emph{essentially isotonic w.r.t.~$\le$} iff there is a $\mu$-null set $N$ such that the restriction $f|_{X\setminus N}$ is isotonic w.r.t.~$\le$ (restricted to $X\setminus N$).
It is clear that if $f$ is essentially isotonic, then so is every function $f'$ that coincides with $f$ $\mu$-almost everywhere. %is $\mu$-almost everywhere equal to $f$. 

\begin{lm} \label{lm:L^q_sigma-convex}
Suppose $(X,\mathcal{A},\mu)$ is a complete measure space with a partial order $\le$ and $I\subset \R$ is a closed interval. Then
\begin{align} \label{eq:def-isotonic-el-of-L^q}
L_{\le}^q(X,\mu,I) := \big\{ f \in L^q(X,\mu,I): &\text{ some (hence every) representative of } f \notag \\
&\text{ is essentially isotonic w.r.t.} \le \big\}
%L_{\le}^q(X,\mu,I) := \big\{ f \in L^q(X,\mu,I): f \text{ has an essentially $\le$-isotonic representative } f_0 \big\}
\end{align}
is a closed convex subset of $L^q(X,\mu,\R)$ for every $q \in [1,\infty)$. 
\end{lm}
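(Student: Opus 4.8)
The plan is to treat convexity and closedness separately, with closedness being the substantial part. Throughout I would fix $q \in [1,\infty)$ and freely use the remark preceding the lemma, which guarantees that essential isotonicity is a property of the whole $L^q$-class rather than of a chosen representative (this is precisely the ``hence every'' in the definition): if one representative is essentially isotonic, so is every other, since they differ only on a null set.

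For convexity, I would take $f,g \in L_{\le}^q(X,\mu,I)$ and $t \in [0,1]$, pick essentially isotonic representatives $\tilde f$ and $\tilde g$, and let $N_f, N_g$ be the associated $\mu$-null sets outside of which $\tilde f$ and $\tilde g$ are genuinely isotonic. On the complement of the null set $N_f \cup N_g$, the function $t\tilde f + (1-t)\tilde g$ is isotonic, since a nonnegative linear combination of isotonic functions is isotonic; hence it is an essentially isotonic representative of $tf+(1-t)g$. Because $I$ is an interval and therefore convex, this representative also takes values in $I$ almost everywhere, so $tf+(1-t)g \in L_{\le}^q(X,\mu,I)$.

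For closedness, I would start from a sequence $f_n \in L_{\le}^q(X,\mu,I)$ with $f_n \to f$ in $L^q(X,\mu,\R)$ and show $f \in L_{\le}^q(X,\mu,I)$. The key tool is that $L^q$-convergence yields a subsequence $f_{n_k}$ converging to $f$ $\mu$-almost everywhere; I would fix the representative of $f$ given by this pointwise limit (set to some fixed value of $I$ on the exceptional null set $N$ where the limit fails to exist), which is legitimate because an a.e.\ limit of a subsequence agrees $\mu$-a.e.\ with the $L^q$-limit. For each $k$ choose an essentially isotonic representative of $f_{n_k}$ with associated null set $N_k$. Then on the complement of the single $\mu$-null set $N \cup \bigcup_k N_k$ — null because a countable union of null sets is null — every $f_{n_k}$ is isotonic and $f_{n_k} \to f$ pointwise. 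For $x \le y$ lying in this complement, passing to the limit in $f_{n_k}(x) \le f_{n_k}(y)$ gives $f(x) \le f(y)$, so the chosen representative of $f$ is isotonic off a null set, i.e.\ essentially isotonic; and since $I$ is closed, the pointwise limit of values in $I$ again lies in $I$, so in fact $f \in L^q(X,\mu,I)$.

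The main obstacle is the closedness argument, and within it the only genuinely delicate point is the bookkeeping of null sets: one must first pass to an a.e.-convergent subsequence (pointwise isotonicity does not survive mere $L^q$-convergence, which gives no pointwise control), and then amalgamate the countably many exceptional null sets into one. The hypothesis that $I$ is \emph{closed} is exactly what keeps the pointwise limit inside $I$ and would fail for a half-open interval, while convexity uses only that $I$ is an interval together with the stability of isotonicity under nonnegative linear combinations.
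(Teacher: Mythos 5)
Your proposal is correct and follows essentially the same route as the paper's proof: convexity from the convexity of intervals and stability of isotonicity under convex combinations, and closedness by extracting an a.e.-convergent subsequence, amalgamating the countably many exceptional null sets, passing to the limit in the pointwise isotonicity inequality, and using closedness of $I$ to keep the limit values in $I$. Your write-up merely makes explicit the null-set bookkeeping that the paper leaves implicit.
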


\begin{proof}
In view of the convexity of intervals, it is clear that $L_{\le}^q(X,\mu,I)$ is a convex subset of $L^q(X,\mu,\R)$ and it remains to prove the closedness of $L_{\le}^q(X,\mu,I)$. So, let $f_n \in L_{\le}^q(X,\mu,I)$ and $f \in L^q(X,\mu,\R)$ with $f_n \longrightarrow f$ w.r.t.~$\norm{\cdot}_{q,\mu}$ and let $f_0, f_{n0}$ be arbitrary representatives of $f$, $f_n$. Then there is a subsequence $(n_k)$ such that 
\begin{align} \label{eq:L^q_sigma-convex-1}
f_{n_k 0}(x) \longrightarrow f_0(x) \qquad (k\to\infty)
\end{align}  
for $\mu$-almost every $x \in X$. Since the functions $f_{n_k 0}$ are all essentially $\le$-isotonic, it follows from~\eqref{eq:L^q_sigma-convex-1} that $f_0$ is essentially $\le$-isotonic as well. Since $f_{n_k 0}(x) \in I$ for $\mu$-almost every $x \in X$ and since $I$ is closed, it further follows from~\eqref{eq:L^q_sigma-convex-1} that $f_0(x) \in I$ for $\mu$-almost every $x \in X$. So, $f_0$ is an essentially $\le$-isotonic representative of $f$ and $f \in L^q(X,\mu,I)$. In other words, $f \in L_{\le}^q(X,\mu,I)$, as desired. 
\end{proof}

With the above lemma at hand, we can now apply the well-known approximation theorem for closed convex sets in uniformly convex spaces in order to get, for every given $f \in L^q(X,\mu,\R)$, the existence of a unique isotonic element of $L^q(X,\mu,I)$ that is closest to $f$ in $q$-norm. 

\begin{thm} \label{thm:ex-and-uniqueness-of-p}
Suppose $(X,\mathcal{A},\mu)$ is a complete measure space with a partial order $\le$, $I\subset \R$ is a non-empty closed interval, $q \in (1,\infty)$ and $f \in L^q(X,\mu,\R)$.
%Suppose $(X,\mathcal{A},\mu)$ is a complete measure space with a partial order $\le$ and $I\subset \R$ is a closed interval. Also, let $q \in (1,\infty)$ and $f \in L^q(X,\mu,\R)$. 
Then there exists a unique element $p(f) \in L^q_{\le}(X,\mu,I)$ such that 
\begin{align}
\norm{f-p(f)}_{q,\mu} = \inf_{g\in L^q_{\le}(X,\mu,I)} \norm{f-g}_{q,\mu}.
\end{align}
In other words, the functional $J_f|_{L^q_{\le}(X,\mu,I)}$ with
\begin{align}
J_f(g) := \norm{f-g}_{q,\mu}^q = \int_X |f-g|^q \d\mu \qquad (g \in L^q(X,\mu,\R))
\end{align}
has a unique minimizer $p(f)$. Additionally, every minimizing sequence for $J_f|_{L^q_{\le}(X,\mu,I)}$, that is, every sequence $(p_n)$ with 
\begin{align}
p_n \in   L^q_{\le}(X,\mu,I) 
\qquad \text{and} \qquad 
J_{f}(p_n) \longrightarrow \inf_{g\in L^q_{\le}(X,\mu,I)} J_{f}(g),
\end{align}
%with $p_n \in   L^q_{\le}(X,\mu,I)$ and with $J_{f}(p_n) \longrightarrow \inf_{g\in L^q_{\le}(X,\mu,I)} J_{f}(g)$, 
converges %in the norm~$\norm{\cdot}_{q,\mu}$ to the minimizer $p(f)$
to the minimizer $p(f)$ in the norm~$\norm{\cdot}_{q,\mu}$.
\end{thm}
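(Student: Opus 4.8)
The plan is to derive the entire statement from the classical best-approximation theorem for non-empty closed convex subsets of uniformly convex Banach spaces, which is exactly the tool announced in the paragraph preceding the theorem. Two of the three hypotheses of that theorem are already in place: for $q \in (1,\infty)$ the space $L^q(X,\mu,\R)$ is uniformly convex by Clarkson's inequalities, and by Lemma~\ref{lm:L^q_sigma-convex} the admissible set $C := L^q_{\le}(X,\mu,I)$ is closed and convex. The only preliminary point I would check is that $C$ is non-empty, so that $m := \inf_{g \in C} J_f(g)$ is a finite non-negative number: since $I \ne \emptyset$, pick $c \in I$; the constant function equal to $c$ is trivially essentially isotonic and (as the measure is finite in all applications) lies in $L^q_{\le}(X,\mu,I)$, whence $C \ne \emptyset$ and $m \le J_f(c) < \infty$.

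The heart of the argument is to show that every minimizing sequence is Cauchy, since existence, uniqueness, and the asserted convergence will all follow from this single fact. So I would take a minimizing sequence $(p_n)$ in $C$, set $r := m^{1/q}$, and put $u_n := f - p_n$, so that $\norm{u_n}_{q,\mu} \to r$. By convexity of $C$ the midpoints $\tfrac{1}{2}(p_n + p_m)$ lie in $C$, hence
\[ r \le \norm{f - \tfrac12(p_n+p_m)}_{q,\mu} = \norm{\tfrac12(u_n+u_m)}_{q,\mu} \le \tfrac12\big(\norm{u_n}_{q,\mu} + \norm{u_m}_{q,\mu}\big). \]
As both outer bounds tend to $r$, the midpoint norms tend to $r$ as well, and uniform convexity then forces $\norm{u_n - u_m}_{q,\mu} = \norm{p_m - p_n}_{q,\mu} \to 0$. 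Thus $(p_n)$ is Cauchy.

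From here the conclusions are routine. By completeness of $L^q(X,\mu,\R)$ the Cauchy sequence $(p_n)$ converges to some $p \in L^q(X,\mu,\R)$; since $C$ is closed we get $p \in C$, and continuity of the norm gives $J_f(p) = \norm{f-p}_{q,\mu}^q = r^q = m$, so $p =: p(f)$ is a minimizer. This simultaneously establishes existence and shows that \emph{every} minimizing sequence converges to a minimizer. For uniqueness, I would note that if $p$ and $p'$ are both minimizers, then the interleaved sequence $p, p', p, p', \dots$ is again minimizing, hence Cauchy by the argument above, which forces $p = p'$; in particular every minimizing sequence converges to the one and the same limit $p(f)$.

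The step I expect to be the only genuine obstacle is the passage from uniform convexity to the Cauchy property, i.e.\ making precise that $\norm{u_n}_{q,\mu},\norm{u_m}_{q,\mu} \to r$ together with $\norm{\tfrac12(u_n+u_m)}_{q,\mu} \to r$ imply $\norm{u_n-u_m}_{q,\mu} \to 0$. This needs a small case distinction: for $r = 0$ it is immediate from the triangle inequality, while for $r > 0$ it follows by applying the definition of the modulus of convexity $\delta(\eps)$ to the normalized vectors $u_n/\norm{u_n}_{q,\mu}$ and $u_m/\norm{u_m}_{q,\mu}$ and letting $n,m$ grow. Everything else -- convexity and closedness of $C$, completeness of $L^q$, and non-emptiness of $C$ -- is either immediate or already supplied by Lemma~\ref{lm:L^q_sigma-convex} and Clarkson's inequalities.
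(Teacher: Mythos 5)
Your proof is correct, and it takes a genuinely more self-contained route than the paper. The paper delegates both halves of the statement to Larsen's book: existence and uniqueness of $p(f)$ follow by citing the approximation theorem for non-empty closed convex subsets of uniformly convex spaces (Corollary~8.2.1 of~\cite{La}), while the convergence of minimizing sequences follows by invoking the \emph{proof} of Theorem~8.2.2 of~\cite{La} after the translation $h_n := p_n - f$, $K := L^q_{\le}(X,\mu,I) - f$, which turns the minimization of $J_f$ into finding the element of least norm in $K$. You instead reprove that classical theorem from scratch: the uniform-convexity argument showing that every minimizing sequence is Cauchy, from which existence (completeness of $L^q$, closedness of the admissible set, continuity of the norm), uniqueness (interleaving two minimizers), and the convergence of every minimizing sequence all follow at once. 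The mathematical core -- Clarkson's theorem plus Lemma~\ref{lm:L^q_sigma-convex} -- is identical in both arguments; what your version buys is self-containedness and a unified treatment of the three conclusions, at the price of carrying out the normalization and modulus-of-convexity step (with the case split $r=0$ versus $r>0$) that the citation conceals, and you handle that step correctly. One shared blemish: neither you nor the paper actually establishes that $L^q_{\le}(X,\mu,I)$ is non-empty under the stated hypotheses. The theorem does not assume $\mu(X)<\infty$, and if $0\notin I$ while $\mu(X)=\infty$, the set is empty, so constant functions do not help; the paper attributes non-emptiness to Lemma~\ref{lm:L^q_sigma-convex}, which does not provide it, and your justification (``the measure is finite in all applications'') appeals to facts outside the theorem's hypotheses. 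Since this defect is inherited from the paper rather than introduced by you, it does not count against your argument.
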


\begin{proof}
Since $L^q(X,\mu,\R)$ with its standard norm $\norm{\cdot}_{q,\mu}$ is a uniformly convex Banach space for $q \in (1,\infty)$ 
by Clarkson's theorem (Theorem~5.2.11 of~\cite{Me}, for instance) and since $L^q_{\le}(X,\mu,I)$ is a non-empty closed convex subset of $L^q(X,\mu,\R)$ by Lemma~\ref{lm:L^q_sigma-convex}, the first part of the theorem follows by the well-known approximation theorem for closed convex sets in uniformly convex spaces (Corollary~8.2.1 of~\cite{La}, for instance). 
In order to prove the second part of the theorem, let $(p_n)$ be a minimizing sequence of $J_f|_{L^q_{\le}(X,\mu,I)}$ and write $h_n := p_n-f$ and $K := L^2_{\le}(X,\mu,I) - f$. Then $K$ is a non-empty closed convex subset of $L^q(X,\mu,\R)$ by Lemma~\ref{lm:L^q_sigma-convex} and
\begin{align} \label{eq:ex-and-uniqueness-of-p-1}
h_n \in K \qquad (n\in\N) 
\qquad \text{and} \qquad 
\norm{h_n}_{q,\mu} \longrightarrow \inf_{h\in K} \norm{h}_{q,\mu} \qquad (n\to\infty).
\end{align}
So, by the proof of Theorem~8.2.2 in~\cite{La}, the sequence $(h_n)$ converges w.r.t.~$\norm{\cdot}_{q,\mu}$ to the unique element $h_0 \in K$ with $\norm{h_0}_{q,\mu} = \inf_{h\in K}\norm{h}_{q,\mu}$ and, therefore, 
\begin{align} \label{eq:ex-and-uniqueness-of-p-2}
p_n = h_n + f \underset{\norm{\cdot}_{q,\mu}}{\longrightarrow} h_0 + f =: p \qquad (n\to\infty).
\end{align}
Since $p \in K + f = L^q_{\le}(X,\mu,I)$ and $\norm{f-p}_{q,\mu} = \inf_{g\in L^2_{\le}(X,\mu,I)} \norm{f-g}_{q,\mu}$, we see that $p = p(f)$ which, in conjunction with~\eqref{eq:ex-and-uniqueness-of-p-2}, proves the second part of the theorem.  %and the second part of the theorem follows. 
\end{proof}

As usual, we call the unique closest point $p(f)$ of $L^q_{\le}(X,\mu,I)$ to a given $f \in L^q(X,\mu,\R)$ the \emph{projection of $f$ onto $L^q_{\le}(X,\mu,I)$} or the \emph{isotonic ($q$-integrable) regression of $f$ (with values in $I$)}. %or the best approximation of $f$ out of $L^q_{\le}(X,\mu,I)$
%
%In the special case where $q=2$ and $I = \R$, this projection $p(f)$ has  nice geometric characterizations: %namely as the
We now restrict our attention to the case %$q=2$. %in the following.
\begin{align}
q=2 \qquad \text{and} \qquad I = \R
\end{align}
In this special case, the projection $p(f)$ can be nicely characterized in geometric terms: %has  nice geometric characterizations:
for instance, as the element $f^*$ of $L^2_{\le}(X,\mu,\R)$ for which the vector $f-f^*$ makes an obtuse angle with all the vectors $g-f^*$ with $g \in L^2_{\le}(X,\mu,\R)$.

\begin{prop} \label{prop:geometr-char-of-p}
Suppose $(X,\mathcal{A},\mu)$ is a complete measure space with a partial order $\le$ and let $f, f^* \in L^2(X,\mu,\R)$. %and let $q = 2$.
Then the following conditions are equivalent.
\begin{itemize}
\item[(i)] $f^*$ is the projection of $f$ onto $L^2_{\le}(X,\mu,\R)$, in short: $f^* = p(f)$
\item[(ii)] $f^* \in L^2_{\le}(X,\mu,\R)$ and $\scprd{f-f^*,f^*-g}_{2,\mu} \ge 0$ for all $g \in L^2_{\le}(X,\mu,\R)$
\item[(iii)] $f^* \in L^2_{\le}(X,\mu,\R)$ and $\scprd{f-f^*,f^*}_{2,\mu} = 0$ while $\scprd{f-f^*,g}_{2,\mu} \le 0$ for all $g \in L^2_{\le}(X,\mu,\R)$. 
%\begin{align}
%\scprd{f-f^*,f^*}_{2,\mu} = 0 
%\qquad \text{and} \qquad \scprd{f-f^*,g}_{2,\mu} \le 0 \qquad  (g \in L^2_{\le}(X,\mu,\R)).
%\end{align}
\end{itemize} 
\end{prop}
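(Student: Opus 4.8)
The plan is to recognize Proposition~\ref{prop:geometr-char-of-p} as the variational characterization of the metric projection onto a closed convex set in a Hilbert space, sharpened by the fact that $C := L^2_{\le}(X,\mu,\R)$ is not merely convex but a convex \emph{cone}. Throughout, $H := L^2(X,\mu,\R)$ is a real Hilbert space with inner product $\scprd{\cdot,\cdot}_{2,\mu}$, and $C$ is a non-empty closed convex subset of $H$ by Lemma~\ref{lm:L^q_sigma-convex} (with $q=2$, $I = \R$). I would prove the two equivalences (i)~$\Leftrightarrow$~(ii) and (ii)~$\Leftrightarrow$~(iii) separately.

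For (i)~$\Rightarrow$~(ii): assuming $f^* = p(f) \in C$ minimizes $g \mapsto \norm{f-g}_{2,\mu}^2$ over $C$, I fix $g \in C$ and use convexity to place $g_t := (1-t)f^* + t g \in C$ for $t \in [0,1]$. Expanding $\norm{f - g_t}_{2,\mu}^2 - \norm{f - f^*}_{2,\mu}^2 = -2t\,\scprd{f-f^*,\, g - f^*}_{2,\mu} + t^2 \norm{g - f^*}_{2,\mu}^2 \ge 0$, dividing by $t>0$ and letting $t \to 0^+$ yields $\scprd{f-f^*,\, g - f^*}_{2,\mu} \le 0$, which is exactly condition~(ii). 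For (ii)~$\Rightarrow$~(i): for any $g \in C$ I write $f - g = (f - f^*) + (f^* - g)$ and expand $\norm{f-g}_{2,\mu}^2 = \norm{f-f^*}_{2,\mu}^2 + 2\scprd{f-f^*,\, f^*-g}_{2,\mu} + \norm{f^*-g}_{2,\mu}^2 \ge \norm{f-f^*}_{2,\mu}^2$, since the cross term is $\ge 0$ by~(ii) and the last term is $\ge 0$. Hence $f^*$ is a minimizer over $C$, and by the uniqueness part of Theorem~\ref{thm:ex-and-uniqueness-of-p} it must coincide with $p(f)$.

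The equivalence (ii)~$\Leftrightarrow$~(iii) is where the cone structure enters. I would first record that $C$ is a convex cone: the constant (in particular the zero) function is isotonic, so $0 \in C$, and if $g$ is essentially isotonic then so is $\lambda g$ for every $\lambda \ge 0$, so $\lambda C \subset C$. Given~(ii), inserting the two admissible comparison functions $g = 0 \in C$ and $g = 2f^* \in C$ gives $\scprd{f-f^*,\, f^*}_{2,\mu} \ge 0$ and $\scprd{f-f^*,\, -f^*}_{2,\mu} \ge 0$, respectively, whence $\scprd{f-f^*,\, f^*}_{2,\mu} = 0$; substituting this back into~(ii) turns $\scprd{f-f^*,\, f^* - g}_{2,\mu} \ge 0$ into $\scprd{f-f^*,\, g}_{2,\mu} \le 0$ for all $g \in C$, which is~(iii). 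Conversely, given~(iii), for any $g \in C$ I simply compute $\scprd{f-f^*,\, f^*-g}_{2,\mu} = \scprd{f-f^*,\, f^*}_{2,\mu} - \scprd{f-f^*,\, g}_{2,\mu} = -\scprd{f-f^*,\, g}_{2,\mu} \ge 0$, recovering~(ii).

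The argument is essentially routine Hilbert-space geometry, so I do not expect a serious obstacle; the only genuinely content-bearing step is the observation that $C$ is a convex cone containing $0$, since this is precisely what upgrades the generic convex-set variational inequality~(ii) into the sharper orthogonality-plus-obtuseness statement~(iii). A minor point to keep clean is that all vectors here are $L^2$-equivalence classes, so I should note that membership in $C$, the inner product, and the norm are all well defined on classes (the defining property of $C$ being invariant under modification on $\mu$-null sets, as already remarked before Lemma~\ref{lm:L^q_sigma-convex}); with that in place, the convex-combination and cone manipulations above are unambiguous.
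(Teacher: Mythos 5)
Your proof is correct. The difference from the paper is one of self-containedness rather than of underlying mathematics: the paper's entire proof consists of observing that $L^2_{\le}(X,\mu,\R)$ is a closed convex set (Lemma~\ref{lm:L^q_sigma-convex}) and also a cone, and then citing Theorems~8.2.2 and~8.2.7 of the Robertson--Wright--Dykstra monograph, which encode exactly the two equivalences you establish. You instead prove both equivalences from first principles -- the standard variational inequality for projections onto closed convex sets via the convex-combination/first-order argument for (i)$\Leftrightarrow$(ii), and the cone refinement via the test elements $g=0$ and $g=2f^*$ for (ii)$\Leftrightarrow$(iii) -- using Theorem~\ref{thm:ex-and-uniqueness-of-p} only for uniqueness of the minimizer. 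Your identification of the cone property (closure under non-negative scalar multiples, with $0 \in C$ since the zero function is isotonic and square-integrable regardless of whether $\mu$ is finite) as the one content-bearing ingredient beyond generic convexity matches precisely the hypothesis structure of the cited external theorems. What your route buys is a proof readable without access to the reference; what the paper's route buys is brevity and an explicit pointer to the standard source where these facts (and the related order-theoretic machinery used later, e.g.\ in Proposition~\ref{prop:properties-of-p}) all live. Your closing remark that membership in $C$ and the inner products are well defined on equivalence classes is a sound precaution and consistent with the paper's remark preceding Lemma~\ref{lm:L^q_sigma-convex}.
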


\begin{proof}
Since $L^2_{\le}(X,\mu,\R)$ is a closed convex set by Lemma~\ref{lm:L^q_sigma-convex} and also a cone, the stated equivalences immediately follow from~Theorem~8.2.2 and Theorem~8.2.7 of~\cite{RoWrDy}. %(or Theorem~1.4.2 and Corollary~1.4.2 in~\cite{Ba})
\end{proof}

In the next two propositions, we further specialize to finite measure spaces $(X,\mathcal{A},\mu)$ in order to ensure that $L^2(X,\mu,\R)$ contains the constant functions and the essentially bounded functions. %$L^{\infty}(X,\mu,\R)$

\begin{prop} \label{prop:elem-properties-of-p}
Suppose $(X,\mathcal{A},\mu)$ is a complete measure space with $\mu(X) < \infty$ and with a partial order $\le$. %and let $q = 2$. 
Then
\begin{itemize}
\item[(i)] %$p$ is invariant on $L^2_{\le}(X,\mu,\R)$, that is, 
$p(f) = f$ for every $f \in L^2_{\le}(X,\mu,\R)$ and, moreover, 
$p$ is idempotent and positively homogeneous, that is,
\begin{align*}
p\big(p(f)\big) = p(f) \qquad \text{and} \qquad p(\alpha f) = \alpha p(f)
\qquad (f \in L^2(X,\mu,\R), \alpha \in [0,\infty))
\end{align*}
\item[(ii)] $p$ is constant-preserving, that is $p(c) = c$ for every $c \in \R$ and, more generally, 
\begin{align*}
p(f+c) = p(f) + c \qquad (f \in L^2(X,\mu,\R), c \in \R)
\end{align*}
\item[(iii)] $p$ is integral-preserving, that is,
\begin{align*}
p\Big( \int_X f \d\mu \Big) = \int_X p(f) \d\mu \qquad (f \in L^2(X,\mu,\R)).
\end{align*}
\end{itemize} 
\end{prop}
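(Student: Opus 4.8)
The plan is to derive all three parts directly from the variational/geometric characterization of the projection in Proposition~\ref{prop:geometr-char-of-p}, together with two elementary structural facts about $L^2_{\le}(X,\mu,\R)$: since $\mu(X)<\infty$, every constant function lies in $L^2(X,\mu,\R)$, and constants are trivially isotonic w.r.t.\ any partial order, so $L^2_{\le}(X,\mu,\R)$ contains all constants; moreover $L^2_{\le}(X,\mu,\R)$ is a cone that is closed under addition of constants (adding or subtracting a constant, and multiplying by $\alpha\ge 0$, all preserve isotonicity). For part~(i), the identity $p(f)=f$ for $f\in L^2_{\le}(X,\mu,\R)$ is immediate from the uniqueness in Theorem~\ref{thm:ex-and-uniqueness-of-p}, because $f$ itself already attains distance $0$ and is therefore the unique minimizer. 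Idempotence then follows by applying this to $p(f)$ in place of $f$, since $p(f)\in L^2_{\le}(X,\mu,\R)$. For positive homogeneity with $\alpha>0$ I would use the cone property: substituting $g=\alpha h$ into the minimization defining $p(\alpha f)$ and factoring out $\alpha^2$ shows that the minimizer is $\alpha\, p(f)$, and uniqueness finishes it; the case $\alpha=0$ is the observation $p(0)=0$ (as $0\in L^2_{\le}(X,\mu,\R)$).

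For part~(ii), constant preservation $p(c)=c$ is just the special case $f=c$ of part~(i). For the translation identity I would set $f^*:=p(f)$ and verify that $f^*+c$ satisfies the characterizing condition~(ii) of Proposition~\ref{prop:geometr-char-of-p} for the datum $f+c$. Indeed $f^*+c\in L^2_{\le}(X,\mu,\R)$, and writing an arbitrary $g\in L^2_{\le}(X,\mu,\R)$ as $g=g'+c$ with $g':=g-c\in L^2_{\le}(X,\mu,\R)$, one computes $\scprd{(f+c)-(f^*+c),(f^*+c)-g}_{2,\mu}=\scprd{f-f^*,f^*-g'}_{2,\mu}\ge 0$, the last inequality holding because $f^*=p(f)$. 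By the equivalence (i)$\Leftrightarrow$(ii) of Proposition~\ref{prop:geometr-char-of-p}, this yields $f^*+c=p(f+c)$.

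For part~(iii), note first that $\int_X f\,\d\mu$ is a constant, so constant preservation from part~(ii) reduces the claim to $\int_X p(f)\,\d\mu=\int_X f\,\d\mu$, i.e.\ to $\scprd{f-p(f),1}_{2,\mu}=0$, where $1$ denotes the constant-one function. Here I would invoke the cone characterization~(iii) of Proposition~\ref{prop:geometr-char-of-p} with the two admissible test functions $g=1$ and $g=-1$ (both isotonic constants, hence in $L^2_{\le}(X,\mu,\R)$): the inequality $\scprd{f-p(f),g}_{2,\mu}\le 0$ then gives $\scprd{f-p(f),1}_{2,\mu}\le 0$ and $\scprd{f-p(f),1}_{2,\mu}\ge 0$ simultaneously, forcing equality.

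Each individual step is short, and I do not expect a genuine obstacle here; the only points requiring care are the bookkeeping in the translation step (rewriting $g=g'+c$ and checking $g'\in L^2_{\le}(X,\mu,\R)$) and confirming that both constants $\pm 1$ really lie in the cone. Both rest solely on the trivial isotonicity of constant functions and the finiteness of $\mu$, so they pose no real difficulty. The essential conceptual input throughout is simply that $L^2_{\le}(X,\mu,\R)$ is a closed convex cone stable under the relevant affine operations, which lets the geometric characterization of Proposition~\ref{prop:geometr-char-of-p} do all the work.
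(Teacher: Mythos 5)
Your proof is correct and takes essentially the same approach as the paper: for part~(iii) your argument is identical to the paper's (applying condition~(iii) of Proposition~\ref{prop:geometr-char-of-p} to the constant test functions $\pm 1$ and combining with constant preservation), while for parts~(i) and~(ii), which the paper dismisses as ``direct consequences of the definition,'' you merely fill in the routine details via uniqueness of the minimizer and the stability of $L^2_{\le}(X,\mu,\R)$ under scaling by $\alpha\ge 0$ and translation by constants. All steps check out, including the implicit use of $\mu(X)<\infty$ to ensure constants lie in $L^2(X,\mu,\R)$ and that $\int_X f \d\mu$ is finite.
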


\begin{proof}
Assertions~(i) and~(ii) are direct consequences of the definition of the projection operator. Assertion~(iii) easily follows from Proposition~\ref{prop:geometr-char-of-p}. Indeed, by the third condition of that proposition applied to the constant (hence isotonic) functions $g_{\pm} := \pm 1 \in L^2_{\le}(X,\mu,\R)$, %and by part~(ii) of the present proposition, 
we see that
\begin{align}
0 \ge \scprd{f-p(f),g_{\pm}}_{2,\mu} = \pm \bigg( \int_X f \d\mu - \int_X p(f)\d\mu \bigg) 
%=  \pm \bigg( p\Big( \int_X f \d\mu \Big) - \int_X p(f)\d\mu \bigg)
\end{align}
which in conjunction with the already proven constant-preservation property yields assertion~(iii).
\end{proof}

In the next proposition, we establish the contractivity of the projection operator $p$ both w.r.t.~the $2$-norm and w.r.t.~the $\infty$-norm. We will make essential use of these contractivity properties in our approximation results below.

\begin{prop} \label{prop:properties-of-p}
Suppose $(X,\mathcal{A},\mu)$ is a complete measure space with $\mu(X) < \infty$ and with a partial order $\le$. %and let $q = 2$. 
Then
\begin{itemize}
\item[(i)] $p$ is a monotonic operator, that is, $\scprd{p(f_1)-p(f_2),f_1-f_2}_{2,\mu} \ge 0$ for all $f_1,f_2 \in L^2(X,\mu,\R)$.
%\begin{align*}
%\scprd{p(f_1)-p(f_2),f_1-f_2}_{2,\mu} \ge 0 
%\qquad (f_1,f_2 \in L^2(X,\mu,\R)).
%\end{align*}
Additionally, $p$ is $\norm{\cdot}_{2,\mu}$-contractive, that is, 
\begin{align} \label{eq:properties-of-p-assertion-1}
\norm{p(f_1)-p(f_2)}_{2,\mu} \le \norm{f_1-f_2}_{2,\mu}
\qquad (f_1,f_2 \in L^2(X,\mu,\R)).
\end{align}
\item[(ii)] $p$ is an order-preserving operator, that is, $p(f_1) \le p(f_2)$ for $f_1,f_2 \in L^2(X,\mu,\R)$ with $f_1 \le f_2$. Additionally, %In particular, 
$p$ restricted to $L^{\infty}(X,\mu,\R)$ is $\norm{\cdot}_{\infty,\mu}$-contractive, that is, $p(L^{\infty}(X,\mu,\R)) \subset L^{\infty}(X,\mu,\R)$ and 
\begin{align} \label{eq:properties-of-p-assertion-2}
\norm{p(f_1)-p(f_2)}_{\infty,\mu} \le \norm{f_1-f_2}_{\infty,\mu}
\qquad (f_1,f_2 \in L^{\infty}(X,\mu,\R)).
\end{align}
\end{itemize}
\end{prop}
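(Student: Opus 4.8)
The plan is to base both parts on the variational characterization of the projection in Proposition~\ref{prop:geometr-char-of-p}(ii), which says that $\scprd{f-p(f),p(f)-g}_{2,\mu}\ge 0$ for every $g \in L^2_{\le}(X,\mu,\R)$. For part~(i), I would apply this inequality twice: once for $f=f_1$ with the competitor $g=p(f_2)$, and once for $f=f_2$ with the competitor $g=p(f_1)$. Writing $u:=p(f_1)$ and $v:=p(f_2)$ and adding the two resulting inequalities, the cross terms combine and the sum rearranges to
\begin{align*}
\scprd{f_1-f_2,\,u-v}_{2,\mu} \ge \norm{u-v}_{2,\mu}^2 \ge 0,
\end{align*}
which is exactly the monotonicity assertion. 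The $\norm{\cdot}_{2,\mu}$-contractivity~\eqref{eq:properties-of-p-assertion-1} then follows immediately by bounding the left-hand side with Cauchy--Schwarz, $\scprd{f_1-f_2,u-v}_{2,\mu} \le \norm{f_1-f_2}_{2,\mu}\norm{u-v}_{2,\mu}$, and dividing by $\norm{u-v}_{2,\mu}$ (the case $u=v$ being trivial). This part is routine.

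For part~(ii) the crux is order-preservation; the $\infty$-contractivity will follow from it together with the constant-preservation already established in Proposition~\ref{prop:elem-properties-of-p}(ii). To prove order-preservation, I would exploit that $L^2_{\le}(X,\mu,\R)$ is a lattice: if $g_1,g_2$ are essentially isotonic, then so are the pointwise minimum $g_1\wedge g_2$ and maximum $g_1\vee g_2$ (the common exceptional null set being the union of the two individual ones), and both lie in $L^2$ since $|g_1\wedge g_2|,|g_1\vee g_2|\le |g_1|+|g_2|$. Assuming $f_1\le f_2$ a.e.\ and again writing $u:=p(f_1)$, $v:=p(f_2)$, I would feed the admissible competitor $g=u\wedge v$ into the variational inequality for $f_1$ and $g=u\vee v$ into the one for $f_2$. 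Using $u-(u\wedge v)=(u-v)^+=\max(u-v,0)$ and $v-(u\vee v)=-(u-v)^+$, adding the two inequalities yields
\begin{align*}
\scprd{(f_1-f_2)-(u-v),\,(u-v)^+}_{2,\mu} \ge 0.
\end{align*}
Since $f_1-f_2\le 0$ a.e.\ and $(u-v)^+\ge 0$, the first contribution is $\le 0$, while the second equals $-\norm{(u-v)^+}_{2,\mu}^2\le 0$; hence the whole expression is simultaneously $\ge 0$ and $\le 0$, forcing $(u-v)^+=0$ a.e., i.e.\ $u\le v$ a.e. This establishes order-preservation.

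Finally, to obtain the $\infty$-contractivity and the inclusion $p(L^{\infty}(X,\mu,\R))\subset L^{\infty}(X,\mu,\R)$, I would take $f_1,f_2\in L^{\infty}$ and set $c:=\norm{f_1-f_2}_{\infty,\mu}$, so that $f_2-c\le f_1\le f_2+c$ a.e. Applying order-preservation and then the identity $p(f_2\pm c)=p(f_2)\pm c$ from Proposition~\ref{prop:elem-properties-of-p}(ii) sandwiches $p(f_2)-c\le p(f_1)\le p(f_2)+c$ a.e., that is $|p(f_1)-p(f_2)|\le c$ a.e., which is~\eqref{eq:properties-of-p-assertion-2}; taking $f_2=0$ and recalling $p(0)=0$ then gives $p(L^{\infty})\subset L^{\infty}$. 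I expect the main obstacle to be the order-preservation step, specifically verifying the lattice property of $L^2_{\le}(X,\mu,\R)$ and correctly identifying the lattice competitors $u\wedge v$ and $u\vee v$ as the test functions that make the cross terms collapse; once those are in place, the sign analysis is immediate.
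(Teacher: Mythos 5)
Your proof is correct, but it takes a genuinely different route from the paper on the two substantive points. For part~(i) and for the order-preservation half of part~(ii), the paper simply cites the discrete-to-abstract projection theory of Robertson--Wright--Dykstra (Theorems~8.2.5, 8.2.6, and 8.2.8 of that monograph, the last one after checking its compatibility requirements for the a.e.~order), whereas you prove both statements from scratch using only the variational characterization in Proposition~\ref{prop:geometr-char-of-p}(ii). Your monotonicity argument (testing the inequality for $f_1$ against $g=p(f_2)$ and vice versa, then adding) is the standard one and is airtight, including the Cauchy--Schwarz step for~\eqref{eq:properties-of-p-assertion-1}. Your order-preservation argument via the lattice competitors $u\wedge v$ and $u\vee v$ is also correct: the lattice property of $L^2_{\le}(X,\mu,\R)$ holds exactly as you say (take the union of the two exceptional null sets, and note $\min$/$\max$ of isotonic functions are isotonic), the identities $u-(u\wedge v)=(u-v)^+$ and $v-(u\vee v)=-(u-v)^+$ are right, and the sign analysis forces $\norm{(u-v)^+}_{2,\mu}=0$ since $\scprd{u-v,(u-v)^+}_{2,\mu}=\norm{(u-v)^+}_{2,\mu}^2$ by the pointwise orthogonality of positive and negative parts. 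In effect you are reproving internally what the cited Theorem~8.2.8 encapsulates (its compatibility conditions are precisely lattice-type conditions on the cone). What your approach buys is self-containedness: the proposition becomes independent of an external reference, at the cost of a page of elementary computation; what the paper's approach buys is brevity. The final step --- deducing $p(L^{\infty}(X,\mu,\R))\subset L^{\infty}(X,\mu,\R)$ and~\eqref{eq:properties-of-p-assertion-2} from order-preservation plus the constant-preservation of Proposition~\ref{prop:elem-properties-of-p}(ii) via the sandwich $f_2-c\le f_1\le f_2+c$ --- is essentially identical to the paper's, differing only in that you get the $L^\infty$-inclusion by taking $f_2=0$ while the paper sandwiches each $f_i$ between $\pm\norm{f_i}_{\infty,\mu}$; both are immediate.
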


\begin{proof}
Assertion~(i) is an immediate consequence of Theorem~8.2.5 and Theorem~8.2.6 of~\cite{RoWrDy}. 
Also, the first part of assertion~(ii) (on order-preservation) is a consequence of Theorem~8.2.8 of~\cite{RoWrDy} because the canonical partial order $\le$ on $L^2(X,\mu,\R)$, induced by almost-everywhere inequality and used here, is easily seen to satisfy the compatibility requirements~(8.2.14) through (8.2.18) of the aforementioned theorem. %canonical lattice structure of $L^2(X,\mu,\R)$ is compatible with the vector space structure in the sense that ...
It remains to prove the second part of assertion~(ii) (on $\norm{\cdot}_{\infty,\mu}$-contractivity). So, let $f_1,f_2 \in L^{\infty}(X,\mu,\R)$, then %$-\norm{f_i}_{\infty,\mu} \le f_i \le \norm{f_i}_{\infty,\mu}$ %(inequalities in the $\mu$-almost-everywhere sense).  
\begin{align} \label{eq:properties-of-p-1}
-\norm{f_i}_{\infty,\mu} \le f_i \le \norm{f_i}_{\infty,\mu}
\quad \text{and} \quad
f_2 - \norm{f_1-f_2}_{\infty,\mu} \le f_1 \le f_2 + \norm{f_1-f_2}_{\infty,\mu}.
\end{align}
And therefore, by the order-preservation property just proven and the general constant-preservation property from Proposition~\ref{prop:elem-properties-of-p},  
\begin{gather}
-\norm{f_i}_{\infty,\mu} = p\big(-\norm{f_i}_{\infty,\mu}\big) \le p(f_i) \le p\big(\norm{f_i}_{\infty,\mu}\big) = \norm{f_i}_{\infty,\mu} \label{eq:properties-of-p-2} \\
p(f_2) - \norm{f_1-f_2}_{\infty,\mu} \le p(f_1) \le p(f_2) + \norm{f_1-f_2}_{\infty,\mu}. \label{eq:properties-of-p-3}
%p(f_2) - \norm{f_1-f_2}_{\infty,\mu} = p\big( f_2  - \norm{f_1-f_2}_{\infty,\mu} \big) \le p(f_1) \le p\big( f_2  + \norm{f_1-f_2}_{\infty,\mu} \big) \notag \\
%= p(f_2) + \norm{f_1-f_2}_{\infty,\mu}
\end{gather}
In view of~\eqref{eq:properties-of-p-2}, the inclusion $p(L^{\infty}(X,\mu,\R)) \subset L^{\infty}(X,\mu,\R)$ is now clear and \eqref{eq:properties-of-p-3}, in turn, immediately implies the estimate~\eqref{eq:properties-of-p-assertion-2}, as desired. 
\end{proof}

We close with some remarks on an alternative definition of the isotonic elements of $L^q(X,\mu,\R)$ in the spirit of~\cite{RoWrDy}. It is based on the collection %family %set $\ol{\mathcal{U}}_{\le}^{\mathrm{mb}}(X)$ 
\begin{align} \label{eq:def-ol-U^mb}
\ol{\mathcal{U}}_{\le}^{\mathrm{mb}}(X) := \big\{ E \in \mathcal{A}: \mu\big((E\setminus U) \cup (U\setminus E)\big) = 0 \text{ for some } U \in \mathcal{U}_{\le}^{\mathrm{mb}}(X) \big\}
\end{align}
of all measurable sets that differ from a measurable upper set $U \in \mathcal{U}_{\le}^{\mathrm{mb}}(X) := \mathcal{U}_{\le}(X) \cap \mathcal{A}$ only by a $\mu$-null set. 
In analogy to~\cite{RoWrDy} (Definition~8.1.2) (where the case $q=2$ is treated), one could then define
\begin{align} \label{eq:altern-def-isotonic-el-of-L^q}
L^q\big( \ol{\mathcal{U}}_{\le}^{\mathrm{mb}}(X),\R \big) := \big\{ f \in L^q(X,\mu,\R): &\text{ for some (hence every) representative } f_0 \text{ of } f, \notag \\ 
&\text{ } f_0^{-1}((c,\infty)) \in \ol{\mathcal{U}}_{\le}^{\mathrm{mb}}(X) \text{ for all } c \in \R \big\}
\end{align} 
and show that this alternative definition of isotonicity for elements of $L^q(X,\mu,\R)$ is actually equivalent to our definition~\eqref{eq:def-isotonic-el-of-L^q}, in short:
\begin{align} \label{eq:equivalence-of-defs-of-isotonic-el-of-L^q}
L^q\big( \ol{\mathcal{U}}_{\le}^{\mathrm{mb}}(X),\R \big) = L^q_{\le}(X,\mu,\R) \qquad (q\in [1,\infty)).  
\end{align}
We opted for %decided for 
the definition~\eqref{eq:def-isotonic-el-of-L^q} because it is simpler and simpler to work with later on. 
In order to see the forward inclusion in~\eqref{eq:equivalence-of-defs-of-isotonic-el-of-L^q}, note that for every $f \in L^q\big( \ol{\mathcal{U}}_{\le}^{\mathrm{mb}}(X),\R \big)$ and every representative $f_0$ of $f$, all characteristic functions of the form $\chi_{f_0^{-1}((c,\infty))}$ are  essentially isotonic by Lemma~\ref{lm:char-isotonic-fcts} and that $f$ can be approximated in $\norm{\cdot}_{q,\mu}$ by simple functions of the form
\begin{gather*}
\phi_0 = \sum_{k=-m}^0 c_k \chi_{f_0^{-1}((c_{k-1},c_k])} + \sum_{k=0}^{m} c_k \chi_{f_0^{-1}((c_k,c_{k+1}])} 
%\\
%=
%\sum_{k=-m}^{-1} (c_k-c_{k+1}) \chi_{f_0^{-1}((-\infty,c_k])} + \sum_{k=1}^m (c_k-c_{k-1}) \chi_{f_0^{-1}((c_k,\infty))} 
\end{gather*}
with $k \mapsto c_k$ monotonically increasing and $c_{-m-1} := -\infty$, $c_0 = 0$, $c_{m+1} := \infty$. Since
\begin{align*}
\phi_0 = \sum_{k=-m}^{-1} (c_k-c_{k+1}) \chi_{f_0^{-1}((-\infty,c_k])} + \sum_{k=1}^m (c_k-c_{k-1}) \chi_{f_0^{-1}((c_k,\infty))},
\end{align*}
the simple functions of that form are essentially isotonic and hence belong to $L^q_{\le}(X,\mu,\R)$, as desired. 
In order to see the backward inclusion in~\eqref{eq:equivalence-of-defs-of-isotonic-el-of-L^q}, note that for every $f \in L^q_{\le}(X,\mu,\R)$ and every representative $f_0$ of $f$, there is a null set $N$ such that 
\begin{align*}
f_0^{-1}((c,\infty)) \cap X\setminus N = \big(f_0|_{X\setminus N}\big)^{-1}((c,\infty)) \in \mathcal{U}_{\le}(X\setminus N)
\end{align*}
by Lemma~\ref{lm:char-isotonic-fcts} and that any $U_0 \in \mathcal{U}_{\le}(X\setminus N)$ can be extended to a $U \in \mathcal{U}_{\le}(X)$ by setting
\begin{align} \label{eq:extend-upper-set-on-X-minus-N-to-upper-set-on-X}
U := U_0 \cup \{y\in N: y \ge u \text{ for some } u \in U \}.
\end{align}
(Incidentally, in the special case of finite measures $\mu$ with $\mu(X) = 1$ and $q=2$, the identity~\eqref{eq:equivalence-of-defs-of-isotonic-el-of-L^q} could also be inferred from Theorem~8.3.2 of~\cite{RoWrDy} due to %in view of 
the properties of our projection operator $p$ established in Proposition~\ref{prop:elem-properties-of-p} and~\ref{prop:properties-of-p}.)

\subsection{Case of rectangular multivariate domains}%{Case of rectangular multivariate sets}

We now further specialize to the case where $X$ is a -- continuous or discrete -- rectangular set in $\R^d$ and where $\mu$ is a weighted Lebesgue or counting measure, respectively. Specifically, %this means that
\begin{align}
X = Q = Q_1 \times \dotsb \times Q_d 
\qquad \text{or} \qquad
X = G = G_1 \times \dotsb \times G_d
\end{align} 
with compact intervals $Q_i = [a_i,b_i] \subset \R$ with $a_i < b_i$ or finite sets $G_i \subset \R$ with $G_i \ne \emptyset$, 
\begin{align}
\mathcal{A} = \mathcal{Q}_Q \qquad \text{or} \qquad \mathcal{A} = \mathcal{P}_G
\end{align}
(Lebesgue sigma-algebra on $Q$ and power set of $G$, respectively), and 
\begin{gather}
\mu(E) = (w_Q \cdot \lambda)(E) = 
\int_E w_Q \d\lambda = \int_E w_Q(x)\d x \qquad (E \in \mathcal{Q}_Q) \\
\text{or} \notag \\
\mu(E) = (w_G \cdot \kappa)(E) = 
\int_E w_G \d\kappa = \sum_{x\in E} w_G(x) \qquad (E \in \mathcal{P}_G),
\end{gather}
where $\lambda$, $\kappa$ are Lebesgue or counting measure on $\mathcal{Q}_Q$ or $\mathcal{P}_G$, respectively, and $w_Q, w_G$ are $\mathcal{A}$-measurable weight functions on $X$ that are bounded above and below
\begin{align} \label{eq:weight-fcts-bd-above-and-below}
\ul{c} \le w_Q(x) \le \ol{c} \qquad (x\in Q) 
\qquad \text{and} \qquad 
\ul{c} \le w_G(x) \le \ol{c} \qquad (x\in G) 
\end{align}
by positive constants $\ul{c},\ol{c} \in (0,\infty)$. 
Also, the partial order %we consider
on $X$ will be given by $\le_{\sigma}$ as defined in~\eqref{eq:def-le_sigma} above -- with some $\sigma \in \{-1,0,1\}^d$. 
In view of~\eqref{eq:weight-fcts-bd-above-and-below}, the set $L^2(X,\mu,\R)$ is independent of the weight function $w_Q, w_G$:
\begin{gather*}
L^2(Q,w_Q \cdot \lambda,\R) = L^2(Q,\lambda,\R) =: L^2(Q,\R), \\
L^2(G,w_G \cdot \kappa,\R) = L^2(G,\kappa,\R) =: \ell^2(G,\R).
\end{gather*}
%\begin{align*}
%L^2(Q,w_Q \lambda,\R) = L^2(Q,\lambda,\R) =: L^2(Q,\R),
%\qquad 
%L^2(G,w_G \kappa,\R) = L^2(G,\kappa,\R) =: \ell^2(G,\R).
%\end{align*}
Accordingly, we also have $L^2_{\sigma}(Q, w_Q\cdot \lambda,I) = L^2_{\sigma}(Q,\lambda,I) =: L^2_{\sigma}(Q,I)$ and $L^2_{\sigma}(Q,w_G \cdot \kappa,I) = L^2_{\sigma}(G,\kappa,I) =: \ell^2_{\sigma}(G,I)$, where 
\begin{align}
L^2_{\sigma}(X,\mu,I) := L^2_{\le_{\sigma}}(X,\mu,I). 
\end{align}
While $L^2(X,\mu,\R)$ as a set is independent of the weight $w_Q$ or $w_G$, the canonical %weighted 
scalar product $\scprd{\cdot,\cdot\cdot}_{2,w_Q}$ or $\scprd{\cdot,\cdot\cdot}_{2,w_G}$ of $L^2(X,\mu,\R)$ does depend on the weight, of course. Correspondingly, the projection of a given $f_Q \in L^2(Q,\R)$ or $f_G \in \ell^2(G,\R)$ onto $L^2_{\sigma}(Q,\R)$ or $\ell^2_{\sigma}(G,\R)$, that is, the  minimizer of %the functional 
$J_{f_Q,w_Q}|_{L^2_{\sigma}(Q,\R)}$ or $J_{f_G,w_G}|_{\ell^2_{\sigma}(G,\R)}$ with 
\begin{align}
J_{f_Q,w_Q}(g) := \norm{f_Q-g}_{2,w_Q}^2 
\qquad \text{or} \qquad
J_{f_G,w_G}(g) := \norm{f_G-g}_{2,w_G}^2, 
\end{align}
depends on the weight function as well. In the following, we will denote this projection by $p_{\sigma}^{w_Q}(f_Q)$ or $p_{\sigma}^{w_G}(f_G)$ and refer to it as the \emph{$\sigma$-monotonic regression} of $f_Q$ or $f_G$, respectively.
We will also use the notation
\begin{align*}
\Av_{f_Q,w_Q}(E) := \frac{\int_E f_Q(x) w_Q(x) \d x}{\int_E w_Q(x) \d x}
%\frac{\int_E f_Q w_Q \d\lambda}{\int_E w_Q \d\lambda}
\qquad \text{and} \qquad
\Av_{f_G,w_G}(E) := \frac{\sum_{x\in E} f_G(x) w_G(x)}{\sum_{x\in E} w_G(x)} 
\end{align*}
for the $(w_Q \cdot \lambda)$- or $(w_G \cdot \kappa)$-average %w.r.t.~the measure $w_Q \cdot \lambda$ or $w_G \cdot \kappa$ 
of the function $f_Q \in L^2(Q,\R)$ or $f_G \in \ell^2(G,\R)$ over any non-null set
\begin{align*}
E \in \mathcal{Q}_Q^{>} := \{E\in\mathcal{Q}_Q: \lambda(E) >0\}
\qquad \text{or} \qquad 
E \in \mathcal{P}_G^{>} := \{E\in\mathcal{P}_G: E \ne \emptyset \},
\end{align*}
%of positive measure. 
respectively. 
At a few places, we will need the following lemma concerning the transition from $\sigma$ to $-\sigma$ and, in that context (and throughout the paper), we use the short-hand notations
\begin{align}
\mathcal{L}_{\sigma} := \mathcal{L}_{\le_{\sigma}}(Q)
\qquad \text{and} \qquad 
\mathcal{U}_{\sigma} := \mathcal{U}_{\le_{\sigma}}(Q).
\end{align}
We omit the straightforward proof of the lemma. %Its proof is a straightforward consequence of the definitions and we thus omit it. 

\begin{lm} \label{lm:minus-sigma}
Suppose $f \in L^2(Q,\R)$ and $w \in L^{\infty}(Q,[c,\infty))$ with some $c \in (0,\infty)$. Then
\begin{multicols}{2} %\raggedcolumns
\begin{itemize}
\item[(i)] $p_{\sigma}^w(f) = -p_{-\sigma}^w(-f)$
\item[(ii)] $\mathcal{U}_{\sigma} = \mathcal{L}_{-\sigma}$. 
%\item[(ii)] $\mathcal{U}_{\sigma} = \mathcal{L}_{-\sigma} = \{Q\setminus L: L \in \mathcal{L}_{\sigma}\}$ and $\chi_U$ is $\sigma$-monotonic for every $U \in \mathcal{U}_{\sigma}$.
\end{itemize}
\end{multicols}
\end{lm}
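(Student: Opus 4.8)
The plan is to derive both parts from a single structural observation: the partial order $\le_{-\sigma}$ is exactly the reversal of $\le_{\sigma}$. Indeed, since negating $\sigma$ does not change the index sets $\hat{I}$ and $\dot{I}$, reading off the definition~\eqref{eq:def-le_sigma} gives $x \le_{-\sigma} y$ iff $-\sigma_i x_i \le -\sigma_i y_i$ for $i \in \hat{I}$ and $x_i = y_i$ for $i \in \dot{I}$, which is the same as $\sigma_i y_i \le \sigma_i x_i$ for $i \in \hat{I}$ and $y_i = x_i$ for $i \in \dot{I}$, that is, $y \le_{\sigma} x$. So $x \le_{-\sigma} y \iff y \le_{\sigma} x$, and I would record this identity at the outset.

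For part~(ii), I would simply feed this reversal into the definition of upper and lower sets. A set $U$ lies in $\mathcal{U}_{\sigma}$ iff $x \le_{\sigma} y$ together with $x \in U$ forces $y \in U$; renaming the bound variables and using $x \le_{\sigma} y \iff y \le_{-\sigma} x$, this is exactly the statement that $x \le_{-\sigma} y$ together with $y \in U$ forces $x \in U$, i.e.\ that $U \in \mathcal{L}_{-\sigma}$. Hence $\mathcal{U}_{\sigma} = \mathcal{L}_{-\sigma}$, with no measurability or null-set considerations entering.

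For part~(i), the key is that $g \mapsto -g$ is a bijection from $L^2_{\sigma}(Q,\R)$ onto $L^2_{-\sigma}(Q,\R)$. This again follows from the order reversal: a representative $g_0$ is essentially $\le_{\sigma}$-isotonic off a null set $N$ iff $-g_0$ is essentially $\le_{-\sigma}$-isotonic off the same $N$ (isotonicity w.r.t.\ a reversed order is antitonicity, i.e.\ isotonicity of the negated function), so $g \in L^2_{\sigma}(Q,\R)$ iff $-g \in L^2_{-\sigma}(Q,\R)$. I would then note the elementary identity $J_{f,w}(g) = \norm{f-g}_{2,w}^2 = \norm{(-f)-(-g)}_{2,w}^2 = J_{-f,w}(-g)$, valid for every $g$. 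Consequently, minimizing $J_{f,w}$ over $L^2_{\sigma}(Q,\R)$ and minimizing $J_{-f,w}$ over $L^2_{-\sigma}(Q,\R)$ are the same problem under the substitution $h = -g$: a minimizer $g^*$ of the former corresponds to the minimizer $-g^*$ of the latter. By the uniqueness of minimizers guaranteed by Theorem~\ref{thm:ex-and-uniqueness-of-p}, the minimizer of $J_{-f,w}$ over $L^2_{-\sigma}(Q,\R)$ is $p_{-\sigma}^w(-f)$, whence $p_{-\sigma}^w(-f) = -p_{\sigma}^w(f)$, which rearranges to the claimed identity $p_{\sigma}^w(f) = -p_{-\sigma}^w(-f)$.

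There is essentially no hard step here; the whole lemma is bookkeeping around the order reversal. The only point requiring a moment's care is the null-set bookkeeping in part~(i) -- checking that the exceptional set witnessing essential isotonicity of $g_0$ can be taken to be the same one witnessing essential isotonicity of $-g_0$ -- but since negation and the order reversal act pointwise and commute with almost-everywhere statements, this causes no real difficulty.
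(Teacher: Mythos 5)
Your proof is correct, and since the paper explicitly omits the proof of this lemma as straightforward, your argument -- deriving everything from the order-reversal identity $x \le_{-\sigma} y \iff y \le_{\sigma} x$, then transporting the minimization problem via the bijection $g \mapsto -g$ and invoking uniqueness of the projection -- is exactly the kind of bookkeeping the authors had in mind. No gaps: part (ii) is purely set-theoretic as you note, and the null-set handling in part (i) is handled correctly since negation preserves the exceptional set.
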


In our continuity result below, we will finally need another %a last
straightforward lemma, which relates monotonicity w.r.t.~a general signature $\sigma \in \{-1,0,1\}^d$ to monotonicity w.r.t.~a signature having only $\pm 1$ as components. %in $\{-1,1\}^{\hat{d}}$. %It is convenient, in that context, to adopt the following notations 
In that context, it is convenient to adopt the following notations for a rectangular set $Q := [a_1,b_1] \times \dotsb \times [a_d,b_d]$, a signature $\sigma \in \{-1,0,1\}^d$, and an arbitrary element $x \in Q$:
\begin{gather}
\hat{Q} := \bigtimes_{i\in\hat{I}} [a_i,b_i], \qquad \dot{Q} := \bigtimes_{i\in\dot{I}} [a_i,b_i], \qquad
\hat{\sigma} := (\sigma_i)_{i\in\hat{I}}, \\
%, \qquad \dot{\sigma} := (\sigma_i)_{i\in\dot{I}} = 0
\hat{x} = (x_i)_{i\in\hat{I}}, \qquad \dot{x} := (x_i)_{i\in\dot{I}},
\qquad \hat{x} \& \dot{x} := x,
\end{gather} 
where $\hat{I} := \{i \in \{1,\dots,d\}: \sigma_i = \pm 1\}$ and $\dot{I} := \{ i \in \{1,\dots,d\}: \sigma_i = 0\}$.

\begin{lm} \label{lm:sigma-hat}
A function $f:Q \to \R$ is $\sigma$-monotonic if and only if the functions $f(\cdot\,\&\dot{x}): \hat{Q} \to \R$ are $\hat{\sigma}$-monotonic for all $\dot{x} \in \dot{Q}$. 
\end{lm}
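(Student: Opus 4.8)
The plan is to reduce the statement to the definitions of the two partial orders $\le_{\sigma}$ (on $Q$) and $\le_{\hat\sigma}$ (on $\hat{Q}$) and to exploit the fact that $\le_{\sigma}$-comparability decouples cleanly into an $\hat{I}$-part and a $\dot{I}$-part. The single observation on which everything rests is the following correspondence, which I would record first: for any $u,v \in Q$,
\begin{align*}
u \le_{\sigma} v \qquad \Longleftrightarrow \qquad \dot{u} = \dot{v} \ \text{ and } \ \hat{u} \le_{\hat\sigma} \hat{v}.
\end{align*}
This is immediate from~\eqref{eq:def-le_sigma}: the condition $\sigma_i u_i \le \sigma_i v_i$ for $i \in \hat{I}$ is exactly $\hat{u} \le_{\hat\sigma} \hat{v}$ (note $\hat\sigma$ has only $\pm1$ components, so no equality constraints are imposed), while the condition $u_i = v_i$ for $i \in \dot{I}$ is exactly $\dot{u} = \dot{v}$.

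For the forward direction I would assume $f$ is $\sigma$-monotonic, fix $\dot{x} \in \dot{Q}$, and take $\hat{u},\hat{v} \in \hat{Q}$ with $\hat{u} \le_{\hat\sigma} \hat{v}$. Setting $u := \hat{u}\,\&\,\dot{x}$ and $v := \hat{v}\,\&\,\dot{x}$, one has $\dot{u} = \dot{x} = \dot{v}$ and $\hat{u} \le_{\hat\sigma} \hat{v}$, so by the correspondence $u \le_{\sigma} v$; isotonicity of $f$ w.r.t.~$\le_{\sigma}$ then gives $f(\hat{u}\,\&\,\dot{x}) = f(u) \le f(v) = f(\hat{v}\,\&\,\dot{x})$, which is precisely $\hat\sigma$-monotonicity of the slice $f(\cdot\,\&\,\dot{x})$.

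For the backward direction I would assume all slices $f(\cdot\,\&\,\dot{x})$ are $\hat\sigma$-monotonic and take $u,v \in Q$ with $u \le_{\sigma} v$. By the correspondence, $\dot{u} = \dot{v}=:\dot{x}$ and $\hat{u} \le_{\hat\sigma} \hat{v}$; applying the assumed monotonicity of the slice $f(\cdot\,\&\,\dot{x})$ to the pair $\hat{u} \le_{\hat\sigma} \hat{v}$ yields $f(u) = f(\hat{u}\,\&\,\dot{x}) \le f(\hat{v}\,\&\,\dot{x}) = f(v)$, so $f$ is isotonic w.r.t.~$\le_{\sigma}$, i.e.~$\sigma$-monotonic.

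There is no genuine analytic obstacle here; the lemma is a bookkeeping identity. The only point requiring care is the interleaving notation: one must verify that $\hat{x}\,\&\,\dot{x}$ reinserts the $\hat{I}$-components and the $\dot{I}$-components at their original coordinate positions, so that the two halves of the condition in~\eqref{eq:def-le_sigma} really separate according to the partition $\{1,\dots,d\} = \hat{I} \cup \dot{I}$. Once the correspondence above is stated cleanly, both implications are one-line applications of it, which is why the proof may simply be omitted or dispatched in a sentence.
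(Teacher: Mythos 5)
Your proof is correct. The paper omits the proof of this lemma entirely (it is introduced as ``another straightforward lemma''), and your argument --- the observation that $u \le_{\sigma} v$ holds if and only if $\dot{u} = \dot{v}$ and $\hat{u} \le_{\hat{\sigma}} \hat{v}$, followed by the two one-line implications --- is exactly the routine verification the authors had in mind, so it correctly fills the gap rather than deviating from any written argument.
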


\section{Special case of grid-constant functions}

In this section, we consider the special case of grid-constant functions $f$ and $w$ on $Q$, that is, functions which are constant on the cells of a grid $G$ on $Q$, and relate the $\sigma$-monotonic regression $p_{\sigma}^w(f)$ to its discrete counterpart, namely the $\sigma$-monotonic regression $p_{\sigma}^{w|_G}(f|_G)$ of the functions $f,w$  restricted to the grid $G$.  
\smallskip

We begin by properly defining what we mean by grids, grid cells, and grid-constant functions. As usual, let $Q$ be a compact rectangular domain of the form
\begin{align}
Q = [a_1,b_1] \times \dotsb \times [a_d,b_d] \subset \R^d
\end{align}
with $a_i < b_i$. A set $G$ is then called a \emph{grid on $Q$} iff there are partitions $P_i = \{t_{ik}: k\in \{0,\dots,m_i\}\}$ of the intervals $[a_i,b_i]$ such that
\begin{align}
G = G_1 \times \dots \times G_d 
\end{align}
with $G_i =\{(t_{ik-1}+t_{ik})/2: k \in \{1,\dots,m_i\}\}$. In other words, a grid on $Q$ is nothing but the set of midpoints of the cells of a rectangular partition $P = P_1 \times \dotsb \times P_d$ of $Q$. (As usual, a partition of a $1$-dimensional interval $[a_i,b_i]$ is a finite set of points $t_{ik} \in [a_i,b_i]$ with $a_i = t_{i0} < t_{i1} < \dotsb < t_{im_{i-1}} < t_{im_i} = b_i$.) A grid $G$ will be called \emph{equidistant} iff its defining partitions $P_i$ are all equidistant, that is,
\begin{align}
t_{ik}-t_{ik-1} = (b_i-a_i)/m_i \qquad (k \in \{1,\dots, m_i\} \text{ and } i \in \{1,\dots,d\}). 
\end{align}  
In particular, %Similarly, 
a grid $G$ will be called \emph{dyadic} iff %there is an $n \in \N$ such that 
its defining partitions $P_i$ consist of $m_i + 1 = 2^n + 1$ partition points and
\begin{align}
t_{ik}-t_{ik-1} = (b_i-a_i)/2^n \qquad (k \in \{1,\dots, 2^n+1\} \text{ and } i \in \{1,\dots,d\}) 
\end{align}
with some $i$-independent number $n \in \N$. 
Additionally, by a \emph{cell of a grid $G$ on $Q$} (with defining partitions $P_i$) we mean a rectangular  set $C$ of the form
\begin{align}
C = I_1 \times \dots \times I_d
\end{align}
where $I_i$ for every $i \in \{1,\dots,d\}$ is a lower semiclosed and upper relatively semiopen partition interval of the partition $P_i$, that is,
\begin{align}
I_i = [t_{ik-1},t_{ik}) \text{ for some } k \in \{1, \dots, m_i-1\} 
\qquad \text{or} \qquad
I_i = [t_{im_i-1},t_{i m_i}].
\end{align}
Clearly, for any grid $G$ on $Q$, the set $Q$ is the disjoint union of all cells of $G$, and for any given $x \in Q$ we will denote the $G$-cell containing $x$ by $C^G(x)$. Also, if $G$ is an equidistant grid on $Q$, then the interiors of its cells are translates of each other
\begin{align} \label{eq:cells-translates-of-each-other}
\operatorname{int} C^G(x) = \operatorname{int} C^G(y) + (x-y) \qquad (x,y \in G)
\end{align}
and we will write $\operatorname{len}_G$ for the maximal edge length and $\operatorname{vol}_G$ for the volume of the $G$-cells.

\begin{lm} \label{lm:cells}
Suppose $G$ is a grid on $Q$. 
\begin{itemize}
\item[(i)] If $x,y \in Q$ satisfy $x \le_{\sigma} y$, then also $x_G \le_{\sigma} y_G$, where $x_G, y_G \in G$ are the grid points with $C^G(x_G) \ni x$ and $C^G(y_G) \ni y$. 
\item[(ii)] If $S$ is a union of cells of $G$, then $S = \bigcup_{x \in S\cap G} C^G(x)$.
%\begin{align}
%S = \bigcup_{x \in S\cap G} C^G(x).
%\end{align}
\end{itemize}
\end{lm}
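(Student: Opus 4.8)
The plan is to prove both parts by exploiting the product structure of the grid together with the fact that the partial order $\le_{\sigma}$ decouples across coordinates. For part~(i), I would first reduce the statement to a one-dimensional observation. Since $G = G_1 \times \dots \times G_d$ and each cell is a product of partition intervals, the assignment $x \mapsto x_G$ acts coordinatewise: its $i$-th component is the map $g_i$ sending $\xi \in [a_i,b_i]$ to the midpoint $(x_G)_i$ of the unique partition interval of $P_i$ that contains $\xi$. The key elementary fact is that each $g_i$ is monotonically non-decreasing. Indeed, if $\xi \le \eta$, then $\xi$ lies in a partition interval whose index $k$ is no larger than the index $l$ of the interval containing $\eta$ (otherwise $\xi \ge t_{ik-1} \ge t_{il} > \eta$, a contradiction), and since the partition points increase in $k$, the corresponding interval midpoints $(t_{ik-1}+t_{ik})/2$ do too.

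With this at hand, I would verify the defining conditions of $x_G \le_{\sigma} y_G$ coordinate by coordinate. For $i \in \dot{I}$ (so $\sigma_i = 0$), the hypothesis gives $x_i = y_i$, hence both lie in the same partition interval and $(x_G)_i = (y_G)_i$, which is exactly what $\le_{\sigma}$ demands in these coordinates. For $i \in \hat{I}$ with $\sigma_i = 1$, the hypothesis reads $x_i \le y_i$, and monotonicity of $g_i$ gives $(x_G)_i \le (y_G)_i$; for $\sigma_i = -1$, the hypothesis $\sigma_i x_i \le \sigma_i y_i$ means $x_i \ge y_i$, so $g_i$ yields $(x_G)_i \ge (y_G)_i$. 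In both sign cases one obtains $\sigma_i (x_G)_i \le \sigma_i (y_G)_i$, and together with the equalities in the $\dot{I}$-coordinates this is precisely $x_G \le_{\sigma} y_G$.

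For part~(ii), I would simply unwind the definition. Since $S$ is a union of cells of $G$, write $S = \bigcup_{z \in A} C^G(z)$ for some index set $A \subseteq G$ of cell midpoints, and then show $A = S \cap G$. The inclusion $A \subseteq S \cap G$ is immediate, because each midpoint $z$ lies in its own cell $C^G(z) \subseteq S$, so $z \in S \cap G$. For the reverse inclusion, take $z \in S \cap G$; then $z$ lies in some $C^G(z')$ with $z' \in A$, but $z$ also lies in $C^G(z)$, and since $Q$ is the disjoint union of the cells of $G$, these two cells coincide, forcing $z' = z \in A$. Hence $S = \bigcup_{z \in A} C^G(z) = \bigcup_{z \in S\cap G} C^G(z)$.

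Neither part presents a genuine obstacle: both follow from the product structure and the half-open/closed cell convention. The only points requiring mild care are that this convention makes $x \mapsto x_G$ well-defined on all of $Q$ (so that each $g_i$ is a genuine function), and, in part~(ii), the use of cell disjointness to identify $z'$ with $z$. If anything could be called the crux, it is the monotonicity of the coordinatewise grid map $g_i$, but this becomes transparent once the coordinatewise reduction is carried out.
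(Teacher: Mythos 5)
Your proof is correct. The paper in fact omits the proof of this lemma, stating only that it is straightforward; your argument -- reducing part~(i) to the coordinatewise monotonicity of the map $\xi \mapsto g_i(\xi)$ sending a point to the midpoint of its partition interval, and using disjointness of cells plus uniqueness of the grid point in each cell for part~(ii) -- is exactly the routine verification the authors had in mind, so there is nothing to compare it against and nothing missing in what you wrote.
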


We omit the straightforward proof of this lemma. 
If $G$ is a grid on $Q$, then a function $f: Q \to \R$ will be called \emph{$G$-constant} iff it is constant on the cells of $G$, that is, the restriction $f|_{C}$ is constant for every cell $C$ of $G$ so that, in particular,
\begin{align*}
f = \sum_{x\in G} f(x) \chi_{C^G(x)}.
\end{align*}
Analogously, an equivalence class of functions on $Q$ is called \emph{$G$-constant} iff it has a $G$-constant representative. And finally, a function or equivalence class of functions on $Q$ is called \emph{(equidistantly or dyadically) grid-constant} iff it is $G$-constant for some (equidistant or dyadic) grid $G$ on $Q$.
With these preparations at hand, we can now state and prove the main result of this section. %which reduces the computation of monotonic regression functions for equidistantly grid-constant functions $f$ and $w$ to the computation of corresponding discrete monotonic regressions.  

\begin{thm} \label{thm:spec-case-grid-const-fct}
Suppose $G$ is an equidistant grid on $Q$ and $f:Q\to\R$ and $w:Q\to (0,\infty)$ are $G$-constant functions. Then $p_{\sigma}^w(f)$ is $G$-constant with
\begin{align} \label{eq:spec-case-grid-const-fct}
p_{\sigma}^w(f) = \sum_{x\in G} p_{\sigma}^{w|_G}(f|_G)(x) \chi_{C^G(x)}.
\end{align} 
In more precise terms, the function on the right-hand side of~\eqref{eq:spec-case-grid-const-fct} is a $\sigma$-monotonic representative of $p_{\sigma}^w(f)$. 
\end{thm}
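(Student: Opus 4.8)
The plan is to verify that the function
\[
h := \sum_{x\in G} p_{\sigma}^{w|_G}(f|_G)(x)\,\chi_{C^G(x)}
\]
on the right-hand side of~\eqref{eq:spec-case-grid-const-fct} satisfies the geometric characterization of the projection in Proposition~\ref{prop:geometr-char-of-p}(ii). Writing $g^* := p_{\sigma}^{w|_G}(f|_G) \in \ell^2_{\sigma}(G,\R)$, it suffices to show two things: first, that $h$ is genuinely $\sigma$-monotonic on $Q$ (so that in particular $h \in L^2_{\sigma}(Q,\R)$); and second, that $\scprd{f-h,\,h-g}_{2,w} \ge 0$ for every $g \in L^2_{\sigma}(Q,\R)$. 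Once both hold, Proposition~\ref{prop:geometr-char-of-p} identifies $h = p_{\sigma}^w(f)$ as elements of $L^2(Q,\R)$, while the first point exhibits this particular $h$ as a $\sigma$-monotonic representative, which is exactly the assertion.

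The $\sigma$-monotonicity of $h$ is the easy step. Since $G$ is finite, $g^*$ is genuinely $\sigma$-monotonic on $G$. Given $x \le_{\sigma} y$ in $Q$, Lemma~\ref{lm:cells}(i) yields $x_G \le_{\sigma} y_G$ for the grid points $x_G, y_G$ of the cells containing $x,y$, whence $h(x) = g^*(x_G) \le g^*(y_G) = h(y)$. Thus $h$ is $\sigma$-monotonic everywhere on $Q$.

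The heart of the argument is the variational inequality. Fix $g \in L^2_{\sigma}(Q,\R)$ and pick a (necessarily essentially $\sigma$-monotonic) representative $g_0$, with null set $N$ off which $g_0$ is $\sigma$-monotonic. Because $f$, $h$, and $w$ are all $G$-constant and every cell of the equidistant grid has the same volume $V := \operatorname{vol}_G$, a direct cell-by-cell computation (expanding $\scprd{f-h,h-g}_{2,w}$ into the $h$- and $g$-parts, pulling the $G$-constant factor $(f-h)w$ out of each cell integral) gives
\begin{align*}
\scprd{f-h,\,h-g}_{2,w} = V \sum_{x\in G} \big(f(x)-g^*(x)\big)\,w(x)\,\big(g^*(x)-\gamma(x)\big) = V\,\scprd{f|_G-g^*,\,g^*-\gamma}_{2,w|_G},
\end{align*}
where $\gamma(x) := \frac{1}{V}\int_{C^G(x)} g_0 \,\d z$ is the unweighted cell average of $g_0$. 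By the discrete instance of Proposition~\ref{prop:geometr-char-of-p}(ii) applied to $g^* = p_{\sigma}^{w|_G}(f|_G)$, the right-hand side is nonnegative \emph{provided} $\gamma \in \ell^2_{\sigma}(G,\R)$, i.e.\ provided cell-averaging preserves $\sigma$-monotonicity.

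The main obstacle is exactly this last point, and it is where equidistance of $G$ is indispensable. For $x \le_{\sigma} y$ in $G$, property~\eqref{eq:cells-translates-of-each-other} makes the cells translates, $\operatorname{int} C^G(y) = \operatorname{int} C^G(x) + (y-x)$. The displacement satisfies $\sigma_i(y-x)_i \ge 0$ for $i \in \hat{I}$ and $(y-x)_i = 0$ for $i \in \dot{I}$, so $z \le_{\sigma} z + (y-x)$ for every $z$; hence $g_0(z) \le g_0\big(z+(y-x)\big)$ for almost every $z \in C^G(x)$, the exceptional set lying inside $N \cup \big(N-(y-x)\big)$, again a null set. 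Integrating over $C^G(x)$ and changing variables via the translation turns this into $\int_{C^G(x)} g_0 \le \int_{C^G(y)} g_0$, that is $\gamma(x) \le \gamma(y)$, so $\gamma$ is $\sigma$-monotonic on $G$. This closes the argument. I expect the bookkeeping with null sets in this averaging step, together with the recognition that equidistance is precisely what renders averaging order-preserving, to be the only delicate part; the reduction to the discrete inequality and the $\sigma$-monotonicity of $h$ are routine.
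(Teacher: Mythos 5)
Your proof is correct and follows essentially the same route as the paper's: a cell-by-cell expansion of $\scprd{f-h,\,h-g}_{2,w}$ exploiting $G$-constancy, the observation that equidistance makes cells translates of one another so that cell-averaging preserves $\sigma$-monotonicity, and then the discrete instance of Proposition~\ref{prop:geometr-char-of-p} to close the inequality. The only (cosmetic) difference is that you normalize the cell integrals into averages $\gamma$ and invoke condition~(ii) of that proposition in one stroke, whereas the paper keeps the unnormalized integrals $h_|$ and applies condition~(iii) to the two resulting terms separately.
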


\begin{proof}
Write $f_| := f|_G$, $w_| := w|_G$ and $f_|^* := p_{\sigma}^{w_|}(f_|)$ for brevity. Also, write 
\begin{align} \label{eq:spec-case-grid-const-fct-1}
f_0^* := \sum_{x\in G} f_|^*(x) \chi_{C^G(x)}
\end{align}
for the function on the right-hand side of~\eqref{eq:spec-case-grid-const-fct} as well as $f^*$ for the equivalence class of $f_0^*$. We will show that
\begin{align} \label{eq:spec-case-grid-const-fct-2}
f^* \in L^2_{\sigma}(Q,\R) \qquad \text{and} \qquad \scprd{f-f^*,f^*-g}_{2,w} \ge 0 \qquad (g \in L^2_{\sigma}(Q,\R)), 
\end{align}
from which the desired conclusion immediately follows by virtue of Propositon~\ref{prop:geometr-char-of-p}. 
Clearly, $f_0^*$ is a $G$-constant function and, by the $\sigma$-monotonicity of $f_|^*$ and Lemma~\ref{lm:cells}(i), $f_0^*$ is also $\sigma$-monotonic. In particular, (\ref{eq:spec-case-grid-const-fct-2}.a) is satisfied and it remains to establish~(\ref{eq:spec-case-grid-const-fct-2}.b). 
So, let $g \in L^2_{\sigma}(Q,\R)$ and let $g_0$ be any representative of $g$. Since $f$, $f_0^*$ and $w$ are $G$-constant and $G$ is equidistant,
we have
\begin{gather} 
\scprd{f-f^*,f^*-g}_{2,w} 
= \sum_{x\in G} \int_{C^G(x)} (f-f_0^*)(f_0^*-g_0) w \d\lambda \notag \\
= \sum_{x\in G} (f(x)-f_0^*(x))f_0^*(x) w(x) \operatorname{vol}_G
- \sum_{x\in G} (f(x)-f_0^*(x)) \Big( \int_{C^G(x)} g_0 \d\lambda \Big) w(x) \notag \\
= \big\langle f_|-f_|^*,f_|^* \big\rangle_{2,w_|} \cdot \operatorname{vol}_G \, - \,\, \big\langle f_|-f_|^*,h_| \big\rangle_{2,w_|},
\label{eq:spec-case-grid-const-fct-3}
\end{gather}
where $h_|(x) := \int_{C^G(x)} g_0 \d\lambda$ for $x \in G$. Since $g_0$ is essentially $\sigma$-monotonic, $h_|$ is $\sigma$-monotonic as well. (Indeed, by the equidistance of $G$, we have~\eqref{eq:cells-translates-of-each-other}
%the interiors of the $G$-cells are translates of each other: %the cells of $G$ are essentially translates of each other in the sense that
%\begin{align}
%C^G(x)^{\circ} = C^G(y)^{\circ} + (x-y) \qquad (x,y \in G)
%\end{align}   
and therefore for $x,y \in G$ with $x\le_{\sigma} y$ we have
\begin{align*}
h_|(x) = \int_{C^G(y)+(x-y)} g_0(z) \d z = \int_{C^G(y)} g_0(z+x-y) \d z
\le \int_{C^G(y)} g_0(z) \d z = h_|(y),
\end{align*}
as desired.) So, by virtue of Proposition~\ref{prop:geometr-char-of-p}, %with $(X,\mathcal{A},\mu) := (G,\mathcal{P}_G, w_| \cdot \kappa)$, 
we obtain %it follows
\begin{align} \label{eq:spec-case-grid-const-fct-4}
\big\langle f_|-f_|^*,f_|^* \big\rangle_{2,w_|} = 0
\qquad \text{and} \qquad
\big\langle f_|-f_|^*,h_| \big\rangle_{2,w_|} \le 0
\end{align}
which in conjunction with~\eqref{eq:spec-case-grid-const-fct-3} yields~(\ref{eq:spec-case-grid-const-fct-2}.b), as desired. 
\end{proof}

Combining the above result with the well-known discrete averaging formulas (Theorem~1.4.4 of~\cite{RoWrDy}), we obtain averaging formulas for the monotonic regression $p_{\sigma}^w(f)$ of grid-constant functions $f$ and $w$. We will use the abbreviations
\begin{align}
a_{\sigma,\mathrm{is}}^{w,G}(f)(x) &:= \inf_{L \in \mathcal{L}_{\sigma}^G(x)} \sup_{U \in \mathcal{U}_{\sigma}^G(x)} \Av_{f,w}(L \cap U) \\
a_{\sigma,\mathrm{si}}^{w,G}(f)(x) &:= \sup_{U \in \mathcal{U}_{\sigma}^G(x)} \inf_{L \in \mathcal{L}_{\sigma}^G(x)} \Av_{f,w}(L \cap U),
\end{align}
where $\mathcal{L}_{\sigma}^G(x) := \{L \in \mathcal{L}_{\sigma}: L \text{ is a union of $G$-cells and } L \ni x\}$ and $\mathcal{U}_{\sigma}^G(x) := \{U \in \mathcal{U}_{\sigma}: U \text{ is a union of $G$-cells and } U \ni x\}$

\begin{cor} \label{cor:avg-G}
Suppose $G$ is an equidistant grid on $Q$ and $f:Q\to\R$ and $w:Q\to (0,\infty)$ are $G$-constant functions. Then $a_{\sigma,\mathrm{is}}^{w,G}(f)$ and $a_{\sigma,\mathrm{si}}^{w,G}(f)$ are $\sigma$-monotonic representatives of $p_{\sigma}^w(f)$.  
\end{cor}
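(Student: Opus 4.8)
The plan is to reduce the continuous inf--sup and sup--inf expressions over unions of $G$-cells to the corresponding discrete inf--sup and sup--inf expressions over lower and upper sets of the grid $(G,\le_{\sigma})$, for which Theorem~1.4.4 of~\cite{RoWrDy} already identifies the value as the discrete regression $p_{\sigma}^{w|_G}(f|_G)$. Once this reduction is in place, I would invoke Theorem~\ref{thm:spec-case-grid-const-fct} to pass from the discrete regression back to $p_{\sigma}^w(f)$, which is what the corollary asserts.

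First I would record the key computational fact that, because $f$ and $w$ are $G$-constant and $G$ is equidistant, the continuous weighted average over any set $S$ that is a union of $G$-cells coincides with the discrete weighted average over the grid points it contains. Indeed, writing $S = \bigcup_{x\in S\cap G} C^G(x)$ (Lemma~\ref{lm:cells}(ii)) and using that every cell has the same volume $\operatorname{vol}_G$, both the numerator and the denominator of $\Av_{f,w}(S)$ pick up a common factor $\operatorname{vol}_G$, which cancels, giving $\Av_{f,w}(S) = \Av_{f|_G,w|_G}(S\cap G)$. In particular, for every $x\in Q$ the quantities $a_{\sigma,\mathrm{is}}^{w,G}(f)(x)$ and $a_{\sigma,\mathrm{si}}^{w,G}(f)(x)$ depend on $x$ only through the grid point $x_G$ with $C^G(x_G)\ni x$, so both functions are automatically $G$-constant.

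Next I would establish the order-preserving bijection between the index sets of the two optimizations. The map $L\mapsto L\cap G$ sends a cell-union lower set $L\in\mathcal{L}_{\sigma}^G(x)$ to a lower set of $(G,\le_{\sigma})$ containing $x_G$, and the map $L_G\mapsto\bigcup_{z\in L_G}C^G(z)$ is its inverse; both directions follow from Lemma~\ref{lm:cells} (part~(ii) for the inversion, part~(i) to check that images are again lower sets and that $x\in L$ iff $x_G\in L\cap G$). The analogous statement holds for upper sets. Combining this bijection with the average identity of the previous step shows that, evaluated at any grid point $x_0\in G$,
\begin{align*}
a_{\sigma,\mathrm{is}}^{w,G}(f)(x_0) = \inf_{L_G\ni x_0}\sup_{U_G\ni x_0}\Av_{f|_G,w|_G}(L_G\cap U_G),
\end{align*}
where $L_G,U_G$ now range over lower and upper sets of $(G,\le_{\sigma})$; by the inf--sup form of Theorem~1.4.4 of~\cite{RoWrDy} the right-hand side equals $p_{\sigma}^{w|_G}(f|_G)(x_0)$. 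Hence $a_{\sigma,\mathrm{is}}^{w,G}(f)$ is precisely the $G$-constant function $\sum_{x\in G}p_{\sigma}^{w|_G}(f|_G)(x)\chi_{C^G(x)}$, which Theorem~\ref{thm:spec-case-grid-const-fct} identifies as a $\sigma$-monotonic representative of $p_{\sigma}^w(f)$; the sup--inf case is identical, using the sup--inf half of the discrete averaging formula.

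The only genuinely delicate point is the set correspondence of the third step: I must verify that taking the infimum and supremum over cell-unions rather than over arbitrary grid lower and upper sets neither enlarges nor shrinks the optimization range, i.e. that $L\mapsto L\cap G$ really is a bijection onto all lower sets of $(G,\le_{\sigma})$ containing $x_G$. This is exactly what Lemma~\ref{lm:cells} supplies, so no work beyond careful bookkeeping of the index sets is expected. Everything else is the routine volume cancellation of the second step together with a direct appeal to the cited discrete averaging formula.
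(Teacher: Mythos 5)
Your proposal is correct and takes essentially the same route as the paper's own proof: the volume-cancellation identity $\Av_{f,w}(S)=\Av_{f|_G,w|_G}(S\cap G)$ for cell-unions $S$, the correspondence between cell-union lower/upper sets in $\mathcal{L}_{\sigma}^G(x)$, $\mathcal{U}_{\sigma}^G(x)$ and the lower/upper sets of $(G,\le_{\sigma})$ via Lemma~\ref{lm:cells}, the discrete averaging theorem (Theorem~1.4.4 of~\cite{RoWrDy}), and finally Theorem~\ref{thm:spec-case-grid-const-fct} to identify the resulting $G$-constant function as a $\sigma$-monotonic representative of $p_{\sigma}^w(f)$. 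The only cosmetic difference is ordering: you establish the $G$-constancy of the averaging expressions at the outset, whereas the paper proves the identity at grid points first and then extends it to all of $Q$ using the cell-constancy of $x\mapsto\mathcal{L}_{\sigma}^G(x),\mathcal{U}_{\sigma}^G(x)$.
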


\begin{proof}
Write $f_| := f|_G$, $w_| := w|_G$ and $f_|^* := p_{\sigma}^{w_|}(f_|)$ for brevity. Also, write 
\begin{align} \label{eq:avg-G-1}
f_0^* := \sum_{x\in G} f_|^*(x) \chi_{C^G(x)}.
\end{align}
We know that $f_0^*$ is a $\sigma$-monotonic representative of $p_{\sigma}^w(f)$ by the previous theorem and that $f_|^*$ can be expressed as %in terms of discrete averages
\begin{align} \label{eq:avg-G-2}
&\inf_{L^G \in \mathcal{L}_{\sigma}(G,x_0)} \sup_{U^G \in \mathcal{U}_{\sigma}(G,x_0)} \Av_{f_|,w_|}(L^G \cap U^G) 
= f_|^*(x_0)  \notag \\
&\qquad \qquad = \sup_{U^G \in \mathcal{U}_{\sigma}(G,x_0)} \inf_{L^G \in \mathcal{L}_{\sigma}(G,x_0)} \Av_{f_|,w_|}(L^G \cap U^G) 
\qquad (x_0 \in G)
\end{align}
by the well-known discrete averaging theorem (Theorem~1.4.4 of~\cite{RoWrDy}), where $\mathcal{L}_{\sigma}(G,x_0)$ and $\mathcal{U}_{\sigma}(G,x_0)$ is, respectively, the set of all $\sigma$-lower or $\sigma$-upper subsets of $G$ that contain $x_0$.
With the help of Lemma~\ref{lm:cells}, it easily follows that
\begin{align} \label{eq:avg-G-3}
\mathcal{L}_{\sigma}(G,x_0) = \big\{ L\cap G: L \in \mathcal{L}_{\sigma}^G(x_0) \big\}, 
\qquad %\qquad \text{and} \qquad
\mathcal{U}_{\sigma}(G,x_0) = \big\{ U\cap G: U \in \mathcal{U}_{\sigma}^G(x_0) \big\}
\end{align} 
for every $x_0 \in G$. 
Also, for every $L \in \mathcal{L}_{\sigma}^G(x_0)$ and $U \in \mathcal{U}_{\sigma}^G(x_0)$ with $x_0 \in G$, the intersection $L\cap U$ is a union of $G$-cells and thus
\begin{align} \label{eq:avg-G-4}
L \cap U = \bigcup_{x\in L\cap U \cap G} C^G(x) \supset C^G(x_0) 
\qquad (L \in \mathcal{L}_{\sigma}^G(x_0), U \in \mathcal{U}_{\sigma}^G(x_0), x_0 \in G)
\end{align}
(Lemma~\ref{lm:cells}). 
Since $f$ and $w$ are $G$-constant  and $G$ is equidistant, we see by~(\ref{eq:avg-G-4}.a) that
\begin{align*} 
\int_{L\cap U} fw \d\lambda = \sum_{x\in L\cap U\cap G} f(x)w(x) \cdot \operatorname{vol}_G
\qquad \text{and} \qquad
\int_{L\cap U} w \d\lambda = \sum_{x\in L\cap U\cap G} w(x) \cdot \operatorname{vol}_G
\end{align*}
and therefore
\begin{align} \label{eq:avg-G-5}
\Av_{f,w}(L\cap U) = \Av_{f_|,w_|}(L\cap U\cap G)
\qquad (L \in \mathcal{L}_{\sigma}^G(x_0), U \in \mathcal{U}_{\sigma}^G(x_0), x_0 \in G),
\end{align}
where we used that $L\cap U$ is non-null by (\ref{eq:avg-G-4}.b). Combining now~\eqref{eq:avg-G-1}, \eqref{eq:avg-G-2}, \eqref{eq:avg-G-3} and \eqref{eq:avg-G-5}, we obtain
\begin{align} \label{eq:avg-G-6}
a_{\sigma,\mathrm{is}}^{w,G}(f)(x_0) = f_0^*(x_0) = a_{\sigma,\mathrm{si}}^{w,G}(f)(x_0) \qquad (x_0 \in G).
\end{align}
Since the maps $Q \ni x \mapsto \mathcal{L}_{\sigma}^G(x), \mathcal{U}_{\sigma}^G(x)$ are easily seen to be constant on the cells of $G$ and $f_0^*$ is $G$-constant as well, the equalities in~\eqref{eq:avg-G-6} extend to arbitrary $x \in Q$. Consequently, $a_{\sigma,\#}^{w,G}(f) = f_0^*$ is a $\sigma$-monotonic representative of $p_{\sigma}^w(f)$ for $\# \in \{\mathrm{is}, \mathrm{si}\}$, as desired.    
\end{proof}

\section{Approximation of monotonic regression functions by grid-constant monotonic regression functions}%{Approximation of monotonic regressors by grid-constant monotonic regressors}

In this section, we establish two results concerning the approximation of general monotonic regression functions $p_{\sigma}^w(f)$ by monotonic regression functions $p_{\sigma}^{w_n}(f_n)$ of dyadically grid-constant functions $f_n$ and $w_n$. In the first result, $f$ is a general square-integrable function and the approximation is w.r.t.~the $2$-norm, while in the second result $f$ is even essentially bounded and the approximation is w.r.t.~the $\infty$-norm. In both results, we establish simple upper bounds on the approximation error in the respective norm. 
\smallskip

In view of the results from the previous section, the approximants $p_{\sigma}^{w_n}(f_n)$ are dyadically grid-constant themselves and they are determined by the corresponding discrete monotonic regression functions. %$p_{\sigma}^{w_n|_{G_n}}(f_n|_{G_n})$. 
So, with the help of the approximation results below and the results from the previous section, we can reduce the computation of general monotonic regression functions $p_{\sigma}^w(f)$ to the computation of discrete monotonic regression functions and, for that purpose, in turn we can use any of the known discrete algorithms from the literature~\cite{BaBaBrBr} (Section~2.3), \cite{RoWrDy} (Section~1.4), \cite{DyRo82}, \cite{Dy83}, \cite{MaMu85}, \cite{QiEd96}, \cite{HoQu03}, \cite{SpWaWi03}, \cite{St13}, \cite{St15}, \cite{KyRaSa15} and~\cite{St19}.
\smallskip

In the paper~\cite{LiDu14}, a completely different algorithm is proposed to compute $p_{\sigma}^w(f)$, for continuous $f$ and $w \equiv 1$ and $\sigma = (1, \dots, 1)$. It is inspired by the algorithm from~\cite{DyRo82}, \cite{Dy83} and, in a nutshell, works as follows: it again and again cycles %runs
through all coordinate directions and in each direction applies a univariate monotonic regression in that direction (using the pool-adjacent-violators algorithm). In the limit of infinitely many cycles, this algorithm converges to $p_{\sigma}^w(f)$ (Theorem~1 of~\cite{LiDu14}), but no information on the rate of  convergence is given in~\cite{LiDu14}. Additionally, the algorithm from~\cite{LiDu14} is not as simple as our general computational methodology. %neither conceptionally nor computationally  
We will use the abbreviations
\begin{align}
S(Q,I) := \{f: f \text{ is a dyadically grid-constant function on } Q \text{ with values in } I \}
\end{align}
and $S_{\sigma}(Q,I) := \{f \in S(Q,I): f \text{ is $\sigma$-monotonic} \}$ along with the abbreviations 
\begin{align}
S^{\infty}(Q,\R) := \ol{S(Q,\R)}^{\norm{\cdot}_{\infty}}
\qquad \text{and} \qquad
S^{\infty}(Q,(0,\infty)) := \bigcup_{c \in (0,\infty)} \ol{S(Q,[c,\infty))}^{\norm{\cdot}_{\infty}}
\end{align}
and $S_{\sigma}^{\infty}(Q,\R) := \{f \in S^{\infty}(Q,\R): f \text{ is $\sigma$-monotonic} \}$.

\begin{lm} \label{lm:S^infty}
$S^{\infty}(Q,(0,\infty)) \supset C(Q,(0,\infty))$ and for every $w \in S^{\infty}(Q,(0,\infty))$ there exist positive constants $\ul{c},\ol{c} \in (0,\infty)$ such that $\ul{c} \le w(x) \le \ol{c}$ for almost every $x \in Q$. 
\end{lm}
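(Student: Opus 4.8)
The plan is to prove the two assertions separately, each by an elementary approximation argument.

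For the inclusion $C(Q,(0,\infty)) \subset S^{\infty}(Q,(0,\infty))$, I would fix $f \in C(Q,(0,\infty))$ and exploit the compactness of $Q$: the function $f$ is then uniformly continuous and attains a strictly positive minimum $c_0 := \min_{x\in Q} f(x) > 0$. For each $n \in \N$ I would take the dyadic grid $G_n$ with $2^n$ subintervals per coordinate, so that its maximal edge length satisfies $\operatorname{len}_{G_n} = \max_i(b_i-a_i)/2^n \to 0$, and set $f_n := \sum_{x\in G_n} f(x)\chi_{C^{G_n}(x)}$. Since $f(x) \ge c_0$ at every grid point, each $f_n$ lies in $S(Q,[c_0,\infty))$ for one and the same constant $c_0$. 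The key estimate is that every $y \in Q$ lies in a cell $C^{G_n}(x)$ whose midpoint $x$ satisfies $|y-x|_{\infty} \le \operatorname{len}_{G_n}/2$, so that uniform continuity gives $\norm{f_n - f}_{\mathrm{sup}} \to 0$ and hence, a fortiori, $\norm{f_n - f}_{\infty} \to 0$. Consequently $f \in \ol{S(Q,[c_0,\infty))}^{\norm{\cdot}_{\infty}} \subset S^{\infty}(Q,(0,\infty))$.

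For the bounds, I would unwind the definition of $S^{\infty}(Q,(0,\infty))$ as a union: any $w$ in it lies in $\ol{S(Q,[c,\infty))}^{\norm{\cdot}_{\infty}}$ for a single fixed $c > 0$, so there is a sequence $w_n \in S(Q,[c,\infty))$ with $\norm{w_n - w}_{\infty} \to 0$. Each $w_n$ takes only finitely many values (it is constant on the finitely many cells of a dyadic grid) and is therefore bounded; picking $n$ with $\norm{w_n - w}_{\infty} \le 1$ yields $w(x) \le \norm{w_n}_{\mathrm{sup}} + 1 =: \ol{c}$ for almost every $x$. For the lower bound I would pick $n$ with $\norm{w_n - w}_{\infty} \le c/2$ and use $w \ge w_n - \norm{w_n - w}_{\infty} \ge c - c/2 = c/2 =: \ul{c}$ almost everywhere, exploiting that $w_n \ge c$ pointwise by construction.

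The argument is essentially routine, and I do not expect a genuine obstacle. The only points requiring a little care are, first, verifying that the dyadic grids really do furnish arbitrarily fine partitions (so that $\operatorname{len}_{G_n} \to 0$) together with the cell-to-midpoint estimate $|y-x|_{\infty} \le \operatorname{len}_{G_n}/2$; and second, keeping straight the distinction between $\norm{\cdot}_{\mathrm{sup}}$ and the essential-supremum norm $\norm{\cdot}_{\infty}$ — the approximation in the first part is genuinely uniform (hence also valid in $\norm{\cdot}_{\infty}$), whereas the conclusions in the second part are only almost-everywhere statements, as forced by the $\norm{\cdot}_{\infty}$-closure. The one conceptual step worth isolating is the observation that the union defining $S^{\infty}(Q,(0,\infty))$ supplies a \emph{single} common lower constant $c$ for the whole approximating sequence, which is precisely what makes the uniform lower bound $\ul{c}$ available.
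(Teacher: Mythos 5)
Your proposal is correct and follows essentially the same route as the paper: for the inclusion, the paper uses exactly your construction (sampling $f$ at the midpoints of dyadic grids $G_n$ with $\operatorname{len}_{G_n}\to 0$, noting $c:=\inf_{x\in Q}f(x)>0$ and uniform continuity to get $\norm{f_n-f}_{\infty}\to 0$ with $f_n\in S(Q,[c,\infty))$). For the boundedness claim the paper simply states it is an immediate consequence of the definition of $S^{\infty}(Q,(0,\infty))$; your second paragraph is just the explicit unwinding of that remark, and it is accurate.
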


\begin{proof}
Clearly, the second part of the lemma is an immediate consequence of the definition of $S^{\infty}(Q,(0,\infty))$ and we therefore prove only the first part. So, let $f \in C(Q,(0,\infty))$ and define
\begin{align}
f_n := \sum_{x\in G_n} f(x) \chi_{C^{G_n}(x)}
\end{align}
where $(G_n)$ is a sequence of dyadic grids on $Q$ with $\operatorname{len}_{G_n} \longrightarrow 0$. Since $f$ is uniformly continuous on the compact set $Q$, we see that
\begin{align}
c := \inf_{x\in Q} f(x) > 0 
\qquad \text{and} \qquad
S(Q,[c,\infty)) \ni f_n \underset{\norm{\cdot}_{\infty}}{\longrightarrow} f \qquad (n\to\infty)
\end{align}
and therefore $f \in S^{\infty}(Q,(0,\infty))$, as desired. 
\end{proof}

\subsection{Approximation in the $2$-norm}

In this subsection, we prove our approximation result w.r.t.~the $2$-norm. We need two lemmas for that purpose. 

\begin{lm} \label{lm:S-dense-in-L^2}
$S(Q,\R)$ is dense in $L^2(Q,\R)$. 
%If $f \in L^2(Q,\R)$, then there exist dyadic grids $G_n$ on $Q$ and $G_n$-constant functions $f_n$ such that %$f_n \underset{\norm{\cdot}_2}{\longrightarrow} f$ as $n \to \infty$. 
%\begin{align}
%f_n \underset{\norm{\cdot}_2}{\longrightarrow} f \qquad (n\to\infty).
%\end{align}
\end{lm}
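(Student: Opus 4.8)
The plan is to reduce the claim to the density of continuous functions in $L^2(Q,\R)$ and then to exploit that continuous functions on the compact rectangle $Q$ can be uniformly approximated by dyadically grid-constant functions, exactly as in the proof of Lemma~\ref{lm:S^infty}. Since $Q$ has finite Lebesgue measure $\lambda(Q) = \prod_{i=1}^d (b_i-a_i) < \infty$, uniform approximation of a function automatically yields approximation in the $2$-norm, because $\norm{h}_2 \le \lambda(Q)^{1/2}\,\norm{h}_{\infty}$ for every $h \in L^{\infty}(Q,\R)$.

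Concretely, I would fix $f \in L^2(Q,\R)$ and $\eps > 0$ and proceed in two steps joined by the triangle inequality. First, invoking the standard fact that $C(Q,\R)$ is dense in $L^2(Q,\R)$ (with respect to Lebesgue measure on the compact set $Q$), I would pick a continuous $g \in C(Q,\R)$ with $\norm{f-g}_2 < \eps/2$. Second, I would choose a sequence $(G_n)$ of dyadic grids on $Q$ with $\operatorname{len}_{G_n} \longrightarrow 0$ and set $g_n := \sum_{x \in G_n} g(x)\,\chi_{C^{G_n}(x)} \in S(Q,\R)$. By the uniform continuity of $g$ on the compact set $Q$ -- precisely the argument already carried out in the proof of Lemma~\ref{lm:S^infty} -- one has $\norm{g-g_n}_{\infty} \longrightarrow 0$, and hence $\norm{g-g_n}_2 \le \lambda(Q)^{1/2}\,\norm{g-g_n}_{\infty} \longrightarrow 0$. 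Choosing $n$ large enough that $\norm{g-g_n}_2 < \eps/2$ and combining the two estimates gives $\norm{f-g_n}_2 < \eps$ with $g_n \in S(Q,\R)$, which is the asserted density.

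The only nontrivial ingredient is the density of $C(Q,\R)$ in $L^2(Q,\R)$, which I would simply cite as a standard result (it follows, for instance, from the density of simple functions in $L^2$ together with Lusin's theorem, or from the regularity of Lebesgue measure). Everything else is elementary: the passage from the $\infty$-norm to the $2$-norm uses only the finiteness of $\lambda(Q)$, and the uniform approximability of a continuous function by its dyadic grid-constant samples has already been established in Lemma~\ref{lm:S^infty}. I therefore expect no genuine obstacle here; the lemma is a routine stepping stone toward the $2$-norm approximation theorem of this section.
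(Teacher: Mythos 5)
Your proof is correct, but it takes a different route from the paper. The paper does not go through continuous functions at all: it directly cites the known fact that dyadically constant step functions on $\R^d$ (constant on the cells $[(k_1-1)/2^m,k_1/2^m) \times \dotsb \times [(k_d-1)/2^m,k_d/2^m)$) are dense in $L^2(\R^d,\R)$, restricts this to $[0,1]^d$, and then transfers the statement to $Q$ via the bijective affine map $x \mapsto (a_1+x_1(b_1-a_1),\dots,a_d+x_d(b_d-a_d))$, which carries dyadic grids on $[0,1]^d$ to dyadic grids on $Q$ in the paper's sense. Your argument instead factors the approximation through $C(Q,\R)$: cite density of continuous functions in $L^2(Q,\R)$, sample a continuous approximant on finer and finer dyadic grids to get uniform convergence (the argument of Lemma~\ref{lm:S^infty}), and pass from the $\infty$-norm to the $2$-norm using $\lambda(Q)<\infty$. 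Both are sound. The paper's version is shorter once the cited result is granted, but needs the domain-transfer step; yours is more self-contained relative to the paper's own toolbox, relying only on the more widely known density of $C(Q,\R)$ and reusing the sampling construction already present in Lemma~\ref{lm:S^infty}. One small point to make explicit: Lemma~\ref{lm:S^infty} is stated for functions in $C(Q,(0,\infty))$, so you should note that the same uniform-continuity argument applies verbatim to arbitrary $g \in C(Q,\R)$ — the positivity there was only used to get the lower bound $c>0$, which plays no role in your step.
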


\begin{proof}
It is well-known that the set of dyadically constant functions on $\R^d$ -- that is, the functions on $\R^d$ which for some $m \in \N$ are constant on all cells of the form $[(k_1-1)/2^m,k_1/2^m) \times \dotsb \times [(k_d-1)/2^m,k_d/2^m)$ with $k \in \Z^d$ -- is dense in $L^2(\R^d,\R)$. (See, for instance, Lemma~2.17 of~\cite{LiLo} and its proof or Lemma~3.4.6 in conjunction with Proposition~1.4.1 and Lemma~1.4.2 of~\cite{Co}.) Consequently, we see by restriction to $[0,1]^d$ that $S([0,1]^d,\R)$ is dense in $L^2([0,1]^d,\R)$. Applying then the bijective affine transformation 
\begin{align*}
[0,1]^d \ni x \mapsto (a_1+x_1(b_1-a_1), \dots, a_d+x_d(b_d-a_d)) \in Q,
\end{align*}
we obtain the desired density assertion.
\end{proof}

\begin{lm} \label{lm:p^w(f)-contin-in-w,f}
Suppose $f_n,f \in L^2(Q,\R)$ and $w_n, w \in L^{\infty}(Q,[\ul{c},\infty))$ for some $\ul{c} \in (0,\infty)$ such that $f_n \longrightarrow f$ w.r.t.~$\norm{\cdot}_2$ and $w_n \longrightarrow w$ w.r.t.~$\norm{\cdot}_{\infty}$. Then 
\begin{align} \label{eq:p^w(f)-contin-in-w,f}
p_{\sigma}^{w_n}(f_n) \underset{\norm{\cdot}_2}{\longrightarrow} p_{\sigma}^w(f) \qquad (n\to\infty).
\end{align}
\end{lm}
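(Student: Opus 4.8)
The plan is to reduce the joint convergence to a statement about a single, \emph{fixed} optimization problem and then invoke the strong convergence of minimizing sequences guaranteed by Theorem~\ref{thm:ex-and-uniqueness-of-p}. I would write $g_n := p_\sigma^{w_n}(f_n)$ and $g := p_\sigma^w(f)$ and recall that all these functions lie in the fixed closed convex set $L^2_\sigma(Q,\R)$, which does not depend on the weight. I will show that $(g_n)$ is a minimizing sequence for the $w$-weighted functional $J_{f,w}|_{L^2_\sigma(Q,\R)}$, that is, $J_{f,w}(g_n) \to \inf_{h \in L^2_\sigma(Q,\R)} J_{f,w}(h) = J_{f,w}(g)$; by the second part of Theorem~\ref{thm:ex-and-uniqueness-of-p} this forces $g_n \to g$ in $\norm{\cdot}_{2,w}$, hence in $\norm{\cdot}_2$.

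First I would record uniform weight bounds. Since $(w_n)$ converges in $\norm{\cdot}_\infty$ it is bounded, so $\ol c := \sup_n \norm{w_n}_\infty < \infty$; together with $w_n \ge \ul c$ this gives $\ul c \le w_n \le \ol c$ a.e.\ for all $n$, and the same bounds for the limit $w$. Consequently $\sqrt{\ul c}\,\norm{h}_2 \le \norm{h}_{2,w_n} \le \sqrt{\ol c}\,\norm{h}_2$ for every $h \in L^2(Q,\R)$, and likewise with $w_n$ replaced by $w$; in particular $\norm{\cdot}_{2,w_n}$, $\norm{\cdot}_{2,w}$ and $\norm{\cdot}_2$ are all equivalent, uniformly in $n$. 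Using $0 \in L^2_\sigma(Q,\R)$ as a competitor in the definition of $g_n$ yields $\norm{f_n - g_n}_{2,w_n} \le \norm{f_n}_{2,w_n}$, whence $\norm{g_n}_2$ and $\norm{f-g_n}_2$ are bounded uniformly in $n$ (using $f_n \to f$ in $\norm{\cdot}_2$).

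The heart of the argument is the comparison of the two functionals. By optimality of $g_n$ for the weight $w_n$ we have $J_{f_n,w_n}(g_n) \le J_{f_n,w_n}(g)$. I would then estimate the two differences $J_{f,w}(g_n) - J_{f_n,w_n}(g_n)$ and $J_{f_n,w_n}(g) - J_{f,w}(g)$, splitting each into a weight-error part and a function-error part: the weight part is controlled by $\norm{w_n - w}_\infty$ times a bounded $L^2$-quantity, and the function part by $\norm{f_n - f}_2$ times a bounded $L^2$-quantity via the factorization $|f-h|^2 - |f_n-h|^2 = (|f-h|-|f_n-h|)(|f-h|+|f_n-h|)$ together with the reverse triangle inequality and Cauchy--Schwarz. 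Both differences therefore tend to $0$, and chaining the estimates gives
\begin{align*}
J_{f,w}(g_n) \le J_{f_n,w_n}(g_n) + o(1) \le J_{f_n,w_n}(g) + o(1) = J_{f,w}(g) + o(1),
\end{align*}
so $\limsup_n J_{f,w}(g_n) \le J_{f,w}(g)$; since $g$ minimizes $J_{f,w}$ over $L^2_\sigma(Q,\R)$ and $g_n \in L^2_\sigma(Q,\R)$, the reverse inequality holds trivially, and hence $J_{f,w}(g_n) \to J_{f,w}(g)$.

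The main obstacle is that the projections $p_\sigma^{w_n}$ are projections for \emph{different} inner products, so one cannot simply invoke contractivity of a single operator (Proposition~\ref{prop:properties-of-p}); the device that circumvents this is the passage to the fixed functional $J_{f,w}$, combined with the observation that the uniform weight bounds keep all relevant $L^2$-quantities bounded, so that both the weight perturbation $\norm{w_n-w}_\infty$ and the data perturbation $\norm{f_n-f}_2$ enter only multiplied by bounded factors. Once $(g_n)$ is shown to be a $J_{f,w}$-minimizing sequence, the strong (not merely weak) convergence to the minimizer is exactly the second assertion of Theorem~\ref{thm:ex-and-uniqueness-of-p}, which completes the proof.
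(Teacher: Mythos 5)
Your proof is correct, and it takes a route that is structurally different from the paper's, even though both ultimately rest on the same closing tool (the second assertion of Theorem~\ref{thm:ex-and-uniqueness-of-p}). The paper first splits the error by the triangle inequality,
\begin{align*}
\norm{p_{\sigma}^{w_n}(f_n)-p_{\sigma}^{w}(f)}_2
\le \norm{p_{\sigma}^{w_n}(f_n)-p_{\sigma}^{w_n}(f)}_2 + \norm{p_{\sigma}^{w_n}(f)-p_{\sigma}^{w}(f)}_2 ,
\end{align*}
kills the first (data-perturbation) term by the $\norm{\cdot}_{2,w_n}$-contractivity of the fixed-weight operator $p_{\sigma}^{w_n}$ from Proposition~\ref{prop:properties-of-p} together with the uniform norm equivalence, and only then runs a minimizing-sequence argument -- for the sequence $p_{\sigma}^{w_n}(f)$ with the data $f$ held fixed, so that only the weight difference $w-w_n$ enters the functional estimates. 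You instead dispense with the decomposition and with contractivity altogether: you show directly that the full sequence $g_n = p_{\sigma}^{w_n}(f_n)$ is a minimizing sequence for $J_{f,w}|_{L^2_{\sigma}(Q,\R)}$, absorbing both perturbations at once, with the data perturbation handled by the factorization $|f-h|^2-|f_n-h|^2=(|f-h|-|f_n-h|)(|f-h|+|f_n-h|)$, the reverse triangle inequality, and Cauchy--Schwarz, and with the a priori bounds on $\norm{g_n}_2$ obtained from the competitor $0 \in L^2_{\sigma}(Q,\R)$. What each approach buys: the paper's split keeps every individual estimate trivial (contractivity does all the work for the data part, and its minimizing-sequence computation involves only the single error $\norm{w_n-w}_{\infty}$), whereas your unified argument is self-contained in the sense that it needs only the variational characterization of the projections and Theorem~\ref{thm:ex-and-uniqueness-of-p}, at the price of the slightly heavier quadratic-difference estimates; both correctly identify that the genuine obstacle is the $n$-dependence of the inner products, which no single contraction estimate can overcome.
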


\begin{proof}
Clearly, there is also a constant $\ol{c} \in (0,\infty)$ such that $\ul{c} \le w_n, w \le \ol{c}$ for all $n \in \N$ and, moreover,
\begin{align} \label{eq:p^w(f)-contin-in-w,f-1}
\norm{p_{\sigma}^{w_n}(f_n)-p_{\sigma}^{w}(f)}_2 
\le \norm{p_{\sigma}^{w_n}(f_n)-p_{\sigma}^{w_n}(f)}_2 + \norm{p_{\sigma}^{w_n}(f)-p_{\sigma}^{w}(f)}_2
\end{align}
for all $n\in\N$. 
It is easy to show that the first term on the right-hand side of~\eqref{eq:p^w(f)-contin-in-w,f-1} converges to $0$. %as $n\to\infty$.
Indeed, by the $\norm{\cdot}_{2,w_n}$-contractivity of $p_{\sigma}^{w_n}$ (Proposition~\ref{prop:properties-of-p}), we have
\begin{align} \label{eq:p^w(f)-contin-in-w,f-2}
\norm{p_{\sigma}^{w_n}(f_n)-p_{\sigma}^{w_n}(f)}_2^2 \le (1/\ul{c}) \norm{p_{\sigma}^{w_n}(f_n)-p_{\sigma}^{w_n}(f)}_{2,w_n}^2 \le (\ol{c}/\ul{c}) \norm{f_n-f}_2^2
\end{align}
for every $n\in\N$.
It thus remains to show that the second term on the right-hand side of~\eqref{eq:p^w(f)-contin-in-w,f-1} converges to $0$, too. 
In order to do so, we have only to show that $(p_{\sigma}^{w_n}(f))$ is a minimizing sequence for $J_{f,w}|_{L^2_{\sigma}(Q,\R)}$ (Theorem~\ref{thm:ex-and-uniqueness-of-p}). Clearly, %$p_n = p_{\sigma}^{w_n}(f) \in L^2(Q,\R)$ 
\begin{align} \label{eq:p^w(f)-contin-in-w,f-3}
p_n := p_{\sigma}^{w_n}(f) \in L^2_{\sigma}(Q,\R) %\qquad (n\in\N)
\qquad \text{and} \qquad
f^* := p_{\sigma}^w(f) \in L^2_{\sigma}(Q,\R)
\end{align}
for all $n\in\N$ and therefore we have %, writing $f^* := p_{\sigma}^w(f)$ as usual, 
\begin{align} \label{eq:p^w(f)-contin-in-w,f-4}
J_{f,w}(f^*) 
&= \inf_{g\in L^2_{\sigma}(Q,\R)} J_{f,w}(g) \le J_{f,w}(p_n) %= J_{f,w_n}(p_n) + \int_Q |p_n-f|^2 (w-w_n)\d\lambda
= \inf_{g\in L^2_{\sigma}(Q,\R)} J_{f,w_n}(g) + \int_Q |p_n-f|^2 (w-w_n)\d\lambda \notag \\
&\le J_{f,w_n}(f^*) + \int_Q |p_n-f|^2 (w-w_n)\d\lambda \notag \\
&= J_{f,w}(f^*) + \int_Q|f^*-f|^2(w_n-w)\d\lambda + \int_Q |p_n-f|^2 (w-w_n)\d\lambda
\end{align}
for every $n\in\N$. Since $\norm{w_n-w}_{\infty} \longrightarrow 0$ as $n\to\infty$ %$w_n \longrightarrow w$ w.r.t.~$\norm{\cdot}_{\infty}$ 
and since
\begin{align} \label{eq:p^w(f)-contin-in-w,f-5}
\ul{c} \norm{p_n-f}_2^2 \le  J_{f,w_n}(p_n) = \inf_{g\in L^2_{\sigma}(Q,\R)} J_{f,w_n}(g) \le J_{f,w_n}(0) \le \ol{c} \norm{f}_2^2 
\qquad (n\in\N),
\end{align}
the two integrals on the right-hand side of~\eqref{eq:p^w(f)-contin-in-w,f-4} converge to $0$. Consequently, the lower and upper bound for $J_{f,w}(p_n)$ from~\eqref{eq:p^w(f)-contin-in-w,f-4} are asymptotically equal as $n\to\infty$ and thus
\begin{align} \label{eq:p^w(f)-contin-in-w,f-6}
J_{f,w}(p_n) \longrightarrow J_{f,w}(f^*) \qquad (n\to\infty). 
\end{align}
In view of~(\ref{eq:p^w(f)-contin-in-w,f-3}.a) and~\eqref{eq:p^w(f)-contin-in-w,f-6}, it is now clear that $(p_{\sigma}^{w_n}(f)) = (p_n)$ is a minimizing sequence for $J_{f,w}|_{L^2_{\sigma}(Q,\R)}$, as desired. 
\end{proof}

\begin{thm} \label{thm:approx-thm-2-norm}
Suppose $f \in L^2(Q,\R)$ and $w \in S^{\infty}(Q,(0,\infty))$. Then there exist dyadic grids $G_n$ on $Q$ and $G_n$-constant functions $f_n:Q\to\R$ and $w_n:Q\to(0,\infty)$ such that $f_n \longrightarrow f$ w.r.t.~$\norm{\cdot}_2$ and $w_n \longrightarrow w$ w.r.t.~$\norm{\cdot}_{\infty}$ and 
\begin{align} \label{eq:approx-thm-2-norm-assertion-1}
p_{\sigma}^{w_n}(f_n) \underset{\norm{\cdot}_2}{\longrightarrow} p_{\sigma}^w(f) \qquad (n\to\infty).
\end{align}
In particular, $S_{\sigma}(Q,\R)$ is dense w.r.t.~$\norm{\cdot}_2$ in $L^2_{\sigma}(Q,\R)$. Additionally, in the special case where $w$ itself is already dyadically grid-constant, one has the following estimate on the convergence rate:
\begin{align} \label{eq:approx-thm-2-norm-assertion-2}
\norm{p_{\sigma}^w(f_n)-p_{\sigma}^w(f)}_2 \le C \norm{f_n-f}_2,
\end{align}
where $C := (\ol{c}/\ul{c})^{1/2}$ and $\ul{c} := \essinf_{x \in Q} w(x)$ and $\ol{c} := \esssup_{x\in Q} w(x)$.
\end{thm}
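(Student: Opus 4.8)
The plan is to combine the two density facts already available---Lemma~\ref{lm:S-dense-in-L^2} for $f$ and the very definition of $S^{\infty}(Q,(0,\infty))$ for $w$---with the joint continuity of the regression operator proved in Lemma~\ref{lm:p^w(f)-contin-in-w,f}, and then to read off the density statement and the convergence-rate estimate as easy special cases.

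First I would produce the approximating sequences. Since $w \in S^{\infty}(Q,(0,\infty))$, the definition of $S^{\infty}$ furnishes a fixed $c \in (0,\infty)$ and dyadically grid-constant functions $\tilde{w}_n \in S(Q,[c,\infty))$ with $\tilde{w}_n \longrightarrow w$ w.r.t.~$\norm{\cdot}_{\infty}$; in particular all $\tilde{w}_n$ (and hence $w$ a.e.) are bounded below by the common constant $c>0$. Since $f \in L^2(Q,\R)$, Lemma~\ref{lm:S-dense-in-L^2} furnishes dyadically grid-constant functions $\tilde{f}_n$ with $\tilde{f}_n \longrightarrow f$ w.r.t.~$\norm{\cdot}_2$. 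The only point requiring a small argument is that the theorem demands $f_n$ and $w_n$ to be constant on \emph{one} common grid $G_n$. Here I would use that dyadic grids are nested: if $\tilde{f}_n$ is constant on a dyadic grid $G$ and $\tilde{w}_n$ on a dyadic grid $G'$, then both are constant on their common refinement, which is again a dyadic grid $G_n$. Setting $f_n := \tilde{f}_n$ and $w_n := \tilde{w}_n$, regarded as $G_n$-constant functions, the two convergences persist, and Lemma~\ref{lm:p^w(f)-contin-in-w,f} (whose hypotheses hold because $w_n, w \ge c$) yields the convergence~\eqref{eq:approx-thm-2-norm-assertion-1} at once.

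For the density claim I would specialize to the dyadically grid-constant weight $w \equiv 1$ and take an arbitrary $f \in L^2_{\sigma}(Q,\R)$. Then $p_{\sigma}^1(f) = f$ by Proposition~\ref{prop:elem-properties-of-p}(i), while each $p_{\sigma}^1(f_n)$ is $\sigma$-monotonic and dyadically grid-constant by Theorem~\ref{thm:spec-case-grid-const-fct}, hence lies in $S_{\sigma}(Q,\R)$. Applying Lemma~\ref{lm:p^w(f)-contin-in-w,f} with $w_n \equiv 1$ gives $p_{\sigma}^1(f_n) \longrightarrow f$ w.r.t.~$\norm{\cdot}_2$, which is precisely the asserted density of $S_{\sigma}(Q,\R)$ in $L^2_{\sigma}(Q,\R)$.

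For the rate estimate I would take $w_n := w$ for all $n$---legitimate precisely because $w$ is now assumed dyadically grid-constant---and combine the $\norm{\cdot}_{2,w}$-contractivity of $p_{\sigma}^w$ from Proposition~\ref{prop:properties-of-p}(i) with the elementary weight comparison $\ul{c}\,\norm{g}_2^2 \le \norm{g}_{2,w}^2 \le \ol{c}\,\norm{g}_2^2$. Chaining these gives
\begin{align*}
\ul{c}\,\norm{p_{\sigma}^w(f_n)-p_{\sigma}^w(f)}_2^2
\le \norm{p_{\sigma}^w(f_n)-p_{\sigma}^w(f)}_{2,w}^2
\le \norm{f_n-f}_{2,w}^2
\le \ol{c}\,\norm{f_n-f}_2^2,
\end{align*}
and taking square roots yields~\eqref{eq:approx-thm-2-norm-assertion-2} with $C = (\ol{c}/\ul{c})^{1/2}$. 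The whole proof is essentially bookkeeping once Lemma~\ref{lm:p^w(f)-contin-in-w,f} is in hand; the only genuinely new (and quite minor) ingredient is the common-refinement step, which is where I expect whatever small fiddliness there is to reside.
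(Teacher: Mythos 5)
Your proposal is correct and follows essentially the same route as the paper: approximants from Lemma~\ref{lm:S-dense-in-L^2} and the definition of $S^{\infty}(Q,(0,\infty))$, passage to a common (finer) dyadic grid, Lemma~\ref{lm:p^w(f)-contin-in-w,f} for the convergence, $\norm{\cdot}_{2,w}$-contractivity plus the weight comparison for the rate, and Theorem~\ref{thm:spec-case-grid-const-fct} with $w \equiv 1$ for the density claim. The only cosmetic deviation is that for the density statement the paper invokes its rate estimate~\eqref{eq:approx-thm-2-norm-assertion-2} while you invoke Lemma~\ref{lm:p^w(f)-contin-in-w,f} directly; these are interchangeable here.
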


\begin{proof}
In view of the density of $S(Q,\R)$ in $L^2(Q,\R)$ (Lemma~\ref{lm:S-dense-in-L^2}) and the definition of $S^{\infty}(Q,(0,\infty))$, there exist dyadic grids $G_n^f$, $G_n^w$ and $G_n^f$-constant functions $f_n:Q\to\R$ as well as $G_n^w$-constant functions $w_n:Q\to (0,\infty)$ such that
\begin{align} \label{eq:approx-thm-2-norm-1}
f_n \underset{\norm{\cdot}_{2}}{\longrightarrow} f \qquad (n\to\infty)
\qquad \text{and} \qquad
w_n \underset{\norm{\cdot}_{\infty}}{\longrightarrow} w \qquad (n\to\infty). 
\end{align}
Since the grids $G_n^f$, $G_n^w$ are dyadic, their union $G_n := G_n^f \cup G_n^w$ is just the finer of the two grids and therefore $G_n$ is dyadic as well and the functions $f_n, w_n$ are both $G_n$-constant. So, by~\eqref{eq:approx-thm-2-norm-1} and Lemma~\ref{lm:p^w(f)-contin-in-w,f}, the desired convergence~\eqref{eq:approx-thm-2-norm-assertion-1} follows. 
\smallskip

Also, the asserted estimate~\eqref{eq:approx-thm-2-norm-assertion-2} on the convergence rate immediately follows by the $\norm{\cdot}_{2,w}$-contractivity of $p_{\sigma}^w$ (Proposition~\ref{prop:properties-of-p}). 
\smallskip

It remains to prove the density of $S_{\sigma}(Q,\R)$ in $L^2_{\sigma}(Q,\R)$. So, let $f \in L^2_{\sigma}(Q,\R)$ and, moreover,  choose $f_n \in S(Q,\R)$ with $f_n \longrightarrow f$ w.r.t.~$\norm{\cdot}_2$ (Lemma~\ref{lm:S-dense-in-L^2}) and $w_0 := 1$. We then have $f = p_{\sigma}^{w_0}(f)$ and $p_{\sigma}^{w_0}(f_n) \in S_{\sigma}(Q,\R)$ for every $n \in \N$ by Theorem~\ref{thm:spec-case-grid-const-fct}. Combining these two facts with~\eqref{eq:approx-thm-2-norm-assertion-2}, we see that $f$ lies in the $\norm{\cdot}_{2}$-closure of $S_{\sigma}(Q,\R)$, as desired. 
\end{proof}

\subsection{Approximation in the $\infty$-norm}

In this subsection, we prove our approximation result w.r.t.~the $\infty$-norm. We need two lemmas for that purpose. 

\begin{lm} \label{lm:difference-of-inf-sup-expressions}
If $\phi, \psi:S_1\times S_2 \to \R$ are bounded functions on arbitrary sets $S_1, S_2$, then 
\begin{align} \label{eq:difference-of-inf-sup-expressions}
\Big| \inf_{u\in S_1} \Big( \sup_{v \in S_2} \phi(u,v) \Big) &- \inf_{u\in S_1} \Big( \sup_{v \in S_2} \psi(u,v) \Big) \Big|
, 
\Big| \sup_{u\in S_1} \Big( \inf_{v \in S_2} \phi(u,v) \Big) - \sup_{u\in S_1} \Big( \inf_{v \in S_2} \psi(u,v) \Big) \Big| \notag \\
&\le \sup_{(u,v)\in S_1\times S_2} \big| \phi(u,v)-\psi(u,v)\big|.
\end{align}
\end{lm}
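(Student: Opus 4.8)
The plan is to bound each of the two quantities separately by the same uniform sup norm $\displaystyle M := \sup_{(u,v)\in S_1\times S_2}|\phi(u,v)-\psi(u,v)|$, and to exploit the fact that the second inequality reduces to the first. First I would record the pointwise comparison: for every $(u,v)$ one has $\phi(u,v)\le \psi(u,v)+M$ and $\psi(u,v)\le \phi(u,v)+M$, since $|\phi(u,v)-\psi(u,v)|\le M$ by definition of $M$ as a supremum. The boundedness hypothesis guarantees $M<\infty$ and that all the inner suprema and outer infima exist as real numbers, so every expression below is well defined.

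The core is a monotonicity-of-inf-sup argument. I would first take the supremum over $v\in S_2$ in the inequality $\phi(u,v)\le\psi(u,v)+M$, for fixed $u$, to get $\sup_{v}\phi(u,v)\le \sup_{v}\psi(u,v)+M$; the additive constant $M$ survives the supremum untouched. Then I would take the infimum over $u\in S_1$ to obtain
\begin{align*}
\inf_{u}\sup_{v}\phi(u,v)\le \inf_{u}\sup_{v}\psi(u,v)+M.
\end{align*}
By the symmetric argument, starting from $\psi(u,v)\le\phi(u,v)+M$, I get the reverse inequality with the roles of $\phi$ and $\psi$ swapped. Combining the two yields
\begin{align*}
\Big|\inf_{u}\sup_{v}\phi(u,v)-\inf_{u}\sup_{v}\psi(u,v)\Big|\le M,
\end{align*}
which is the first of the two claimed bounds.

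For the second expression, involving $\sup_u\inf_v$, the cleanest route is to apply the already-proven first bound to the auxiliary functions $\tilde\phi:=-\phi$ and $\tilde\psi:=-\psi$. Using the elementary identities $\inf_u\sup_v(-\phi)=-\sup_u\inf_v\phi$ (and likewise for $\psi$), together with $\sup|\tilde\phi-\tilde\psi|=\sup|\phi-\psi|=M$, the first bound applied to $\tilde\phi,\tilde\psi$ transforms directly into
\begin{align*}
\Big|\sup_{u}\inf_{v}\phi(u,v)-\sup_{u}\inf_{v}\psi(u,v)\Big|\le M.
\end{align*}
Both pieces together give the assertion.

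I do not expect a genuine obstacle here; the only thing to be careful about is that the perturbation constant $M$ must be pulled through the supremum and the infimum \emph{additively} rather than being affected by them, which is exactly why a uniform (sup-norm) bound, as opposed to a pointwise-varying bound, is the right hypothesis. The boundedness of $\phi,\psi$ is used only to ensure finiteness of all the suprema, infima, and of $M$ itself.
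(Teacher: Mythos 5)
Your proof is correct and follows essentially the same route as the paper's: the paper isolates the elementary bound $\big|\sup_s\alpha(s)-\sup_s\beta(s)\big|\le\sup_s|\alpha(s)-\beta(s)|$ and applies it twice together with the identity $\inf_s\gamma(s)=-\sup_s(-\gamma(s))$, which is exactly your argument of pushing the uniform constant $M$ additively through the inner supremum and outer infimum and then handling the $\sup$-$\inf$ case by negation. The only difference is organizational (the paper packages the one-variable sup-difference bound as a reusable observation, while you inline it), so there is nothing to add.
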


\begin{proof}
%A straightforward argument, using just the definition of suprema, yields the following fact:
We begin with the following elementary observation (which follows directly from the definition of suprema): if $\alpha, \beta: S \to \R$ are bounded functions on any set $S$, then
\begin{align}
\big| \sup_{s\in S} \alpha(s) - \sup_{s\in S} \beta(s) \big| 
\le \sup_{s\in S} |\alpha(s)-\beta(s)|.
\end{align}  
Applying this observation twice, in conjunction with the fact that %$\inf_{s\in S} \gamma(s) = -\sup_{s\in S} (-\gamma(s))$ 
\begin{align}
\inf_{s\in S} \gamma(s) = -\sup_{s\in S} (-\gamma(s))
\end{align}
for any function $\gamma: S \to \R$, we immediately obtain the desired estimates~\eqref{eq:difference-of-inf-sup-expressions}.   
\end{proof}

\begin{lm} \label{lm:Av_f,w-bounded-and-equicont}
Suppose $f_n,f \in L^{\infty}(Q,\R)$ and $w_n, w \in L^{\infty}(Q,[\ul{c},\infty))$ for some $\ul{c} \in (0,\infty)$ such that $f_n \longrightarrow f$ w.r.t.~$\norm{\cdot}_{\infty}$ and $w_n \longrightarrow w$ w.r.t.~$\norm{\cdot}_{\infty}$. Then $\mathcal{Q}_Q^{>} \ni E \mapsto \Av_{f_n,w_n}(E), \Av_{f,w}(E)$ are bounded functions and 
\begin{align} \label{eq:Av_f,w-bounded-and-equicont}
\sup_{E \in \mathcal{Q}_Q^{>}} \big| \Av_{f_n,w_n}(E) - \Av_{f,w}(E) \big| \longrightarrow 0 \qquad (n\to\infty).
\end{align} 
%where $\mathcal{Q}_Q^{>} := \{E \in \mathcal{Q}_Q: \lambda(E) > 0\}$. 
\end{lm}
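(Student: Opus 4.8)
The plan is to dispatch the boundedness claim by a one-line estimate and then to prove the uniform convergence by a triangle-inequality split that decouples the perturbation of $f$ from the perturbation of $w$. For boundedness I would use that $w \ge \ul{c} > 0$ together with $|f| \le \norm{f}_\infty$ almost everywhere: for every non-null $E$,
\[
|\Av_{f,w}(E)| \le \frac{\int_E |f|\,w\d x}{\int_E w\d x} \le \norm{f}_\infty,
\]
and likewise $|\Av_{f_n,w_n}(E)| \le \norm{f_n}_\infty$. Since $f_n \to f$ in $\norm{\cdot}_\infty$, the numbers $\norm{f_n}_\infty$ are uniformly bounded, so both families of averages are bounded on $\mathcal{Q}_Q^{>}$, uniformly in $n$ and $E$.

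For the convergence I would write
\[
\Av_{f_n,w_n}(E) - \Av_{f,w}(E) = \big(\Av_{f_n,w_n}(E) - \Av_{f,w_n}(E)\big) + \big(\Av_{f,w_n}(E) - \Av_{f,w}(E)\big).
\]
The first bracket changes only the integrand of the numerator, so it equals $\big(\int_E (f_n-f)w_n\d x\big)\big/\big(\int_E w_n\d x\big)$, whose modulus is at most $\norm{f_n-f}_\infty$ by the very same estimate as above; this tends to $0$ uniformly in $E$.

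The second bracket, in which $f$ is held fixed and only the weight varies, is the heart of the matter. I would bring the two quotients over the common denominator $\big(\int_E w_n\d x\big)\big(\int_E w\d x\big)$ and rearrange the numerator as $\big(\int_E w\d x\big)\int_E f(w_n-w)\d x + \big(\int_E f w\d x\big)\int_E (w-w_n)\d x$, which yields
\[
\Av_{f,w_n}(E) - \Av_{f,w}(E) = \frac{\int_E f(w_n-w)\d x}{\int_E w_n\d x} + \Av_{f,w}(E)\,\frac{\int_E (w-w_n)\d x}{\int_E w_n\d x}.
\]
Here the crucial input is the lower bound $\int_E w_n\d x \ge \ul{c}\,\lambda(E)$, valid because $w_n \ge \ul{c}$ and $\lambda(E) > 0$. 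Combining it with $\big|\int_E f(w_n-w)\d x\big| \le \norm{f}_\infty\norm{w_n-w}_\infty\lambda(E)$, with $\big|\int_E (w-w_n)\d x\big| \le \norm{w_n-w}_\infty\lambda(E)$, and with the already established bound $|\Av_{f,w}(E)| \le \norm{f}_\infty$, each factor $\lambda(E)$ cancels against the $\lambda(E)$ in the denominator bound, and the second bracket is seen to be at most $(2\norm{f}_\infty/\ul{c})\,\norm{w_n-w}_\infty$, independently of $E$. Hence it too tends to $0$ uniformly, and \eqref{eq:Av_f,w-bounded-and-equicont} follows.

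The only delicate point---and the reason uniformity over all of $\mathcal{Q}_Q^{>}$ holds rather than merely convergence for each fixed $E$---is precisely this cancellation of $\lambda(E)$. The denominators $\int_E w_n\d x$ become arbitrarily small as $\lambda(E)\to 0$, so the quotients cannot be controlled by numerator bounds alone; it is only because numerator and denominator both scale like $\lambda(E)$, thanks to the two-sided positivity of the weights, that sets of small measure are harmless. I expect this homogeneity observation to be the main (indeed essentially the only) obstacle; the remainder is the triangle inequality and the elementary bound $|f| \le \norm{f}_\infty$.
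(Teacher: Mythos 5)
Your proof is correct, and it rests on the same underlying mechanism as the paper's: both numerator and denominator of the averages scale like $\lambda(E)$, with the denominator bounded below by $\ul{c}\,\lambda(E)$, so quotient-perturbation estimates hold uniformly over $E \in \mathcal{Q}_Q^{>}$. The execution differs in the decomposition. The paper normalizes at the outset, writing $\Av_{f',w'}(E) = \phi_{f',w'}(E)/\psi_{w'}(E)$ with $\phi_{f',w'}(E) := \lambda(E)^{-1}\int_E f'w'\d\lambda$ and $\psi_{w'}(E) := \lambda(E)^{-1}\int_E w'\d\lambda$; after fixing a uniform upper bound $\ol{c}$ for $w_n, w$, these normalized quantities are bounded uniformly in $E$ (with $\psi_{w'} \ge \ul{c}$), and a single quotient estimate perturbing numerator and denominator simultaneously gives $|\Av_{f_n,w_n}(E)-\Av_{f,w}(E)| \le (\ol{c}/\ul{c})\norm{f_n-f}_{\infty} + \norm{f}_{\infty}(1/\ul{c}+\ol{c}/\ul{c}^2)\norm{w_n-w}_{\infty}$. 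You instead insert the intermediate term $\Av_{f,w_n}(E)$: the $f$-perturbation is then handled by the observation that averaging against a fixed weight is an $\norm{\cdot}_{\infty}$-contraction (no constants needed at all), and only the $w$-perturbation requires the common-denominator computation, yielding the slightly cleaner bound $\norm{f_n-f}_{\infty} + (2\norm{f}_{\infty}/\ul{c})\norm{w_n-w}_{\infty}$; note that your route never needs the upper bound $\ol{c}$. Your boundedness estimate $|\Av_{f,w}(E)| \le \norm{f}_{\infty}$ is likewise sharper than the one implicit in the paper. Both arguments are elementary and equally rigorous: yours buys marginally better constants and avoids introducing $\ol{c}$, while the paper's normalization makes the homogeneity in $\lambda(E)$ that you rightly single out as the crux completely automatic rather than a cancellation to be checked.
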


\begin{proof}
Clearly, there is also a constant $\ol{c} \in (0,\infty)$ such that $\ul{c} \le w_n, w \le \ol{c}$ for all $n \in \N$ and, moreover,
\begin{align}
\Av_{f_n,w_n}(E) = \frac{\phi_{f_n,w_n}(E)}{\psi_{w_n}(E)}
\qquad \text{and} \qquad
\Av_{f,w}(E) = \frac{\phi_{f,w}(E)}{\psi_{w}(E)}
\end{align}
for $E \in \mathcal{Q}_Q^{>}$, where $\phi_{f',w'}(E)$, $\psi_{w'}(E)$ for arbitrary $f' \in L^{\infty}(Q,\R)$, $w' \in L^{\infty}(Q,[\ul{c},\infty))$ are defined by
\begin{align}
\phi_{f',w'}(E) := \frac{1}{\lambda(E)}\int_E f'w'\d\lambda
\qquad \text{and} \qquad
\psi_{w'}(E) := \frac{1}{\lambda(E)}\int_E w'\d\lambda.
\end{align}
Since for all $f' \in L^{\infty}(Q,\R)$, $w' \in L^{\infty}(Q,[\ul{c},\infty))$ and $E \in  \mathcal{Q}_Q^{>}$ 
\begin{align}
\phi_{f',w'}(E) \le \norm{f'}_{\infty} \norm{w'}_{\infty} 
\qquad \text{and} \qquad
\psi_{w'}(E) \ge \ul{c}, 
\end{align}
we see that $\mathcal{Q}_Q^{>} \ni E \mapsto \Av_{f_n,w_n}(E), \Av_{f,w}(E)$ are bounded functions. 
Since, moreover, 
\begin{gather}
\big| \phi_{f_n,w_n}(E)-\phi_{f,w}(E) \big| \le \norm{f_n w_n - f w} _{\infty} \le \norm{f_n-f}_{\infty} \ol{c} + \norm{f}_{\infty} \norm{w_n-w}_{\infty}, \\
\big| \psi_{w_n}(E)-\psi_{w}(E) \big| \le \norm{w_n-w}_{\infty},
\end{gather}
we also see that
\begin{align}
&\big| \Av_{f_n,w_n}(E) - \Av_{f,w}(E) \big| \notag \\
&\qquad \qquad \le \frac{1}{\psi_{w_n}(E)} \big| \phi_{f_n,w_n}(E)-\phi_{f,w}(E) \big| + \bigg| \frac{1}{\psi_{w_n}(E)} - \frac{1}{\psi_{w_n}(E)} \bigg| \big| \phi_{f,w}(E) \big| \notag \\
&\qquad \qquad \le (\ol{c}/\ul{c}) \norm{f_n-f}_{\infty} + \norm{f}_{\infty} (1/\ul{c}+\ol{c}/\ul{c}^2) \norm{w_n-w}_{\infty}
\end{align}
for all $E \in \mathcal{Q}_Q^{>}$ and $n\in\N$, which yields the desired uniform convergence~\eqref{eq:Av_f,w-bounded-and-equicont}. 
\end{proof}

\begin{thm} \label{thm:approx-thm-infty-norm}
Suppose $f \in S^{\infty}(Q,\R)$ and $w \in S^{\infty}(Q,(0,\infty))$. Then there exist dyadic grids $G_n$ on $Q$ and $G_n$-constant functions $f_n:Q\to\R$ and $w_n:Q\to(0,\infty)$ such that $f_n \longrightarrow f$ w.r.t.~$\norm{\cdot}_{\infty}$ and $w_n \longrightarrow w$ w.r.t.~$\norm{\cdot}_{\infty}$ and 
\begin{align} \label{eq:approx-thm-infty-norm-assertion-1}
p_{\sigma}^{w_n}(f_n) \underset{\norm{\cdot}_{\infty}}{\longrightarrow} p_{\sigma}^w(f) \qquad (n\to\infty).
\end{align}
In particular, $S_{\sigma}(Q,\R)$ is dense w.r.t.~$\norm{\cdot}_{\infty}$ in $S^{\infty}_{\sigma}(Q,\R)$. Additionally, in the special case where $w$ itself is already dyadically grid-constant, one has the following estimate on the convergence rate:
\begin{align} \label{eq:approx-thm-infty-norm-assertion-2}
\norm{p_{\sigma}^w(f_n)-p_{\sigma}^w(f)}_{\infty} \le \norm{f_n-f}_{\infty}.
\end{align}
\end{thm}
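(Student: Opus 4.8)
The plan is to obtain the approximants directly from the definitions and then to prove the convergence~\eqref{eq:approx-thm-infty-norm-assertion-1} in two moves: first show that $(p_\sigma^{w_n}(f_n))$ is Cauchy in $L^\infty(Q,\R)$, and then identify its $\norm{\cdot}_\infty$-limit with $p_\sigma^w(f)$ by means of the already-established $2$-norm convergence. Producing the approximants is done exactly as in the proof of Theorem~\ref{thm:approx-thm-2-norm}: by the definitions of $S^\infty(Q,\R)$ and $S^\infty(Q,(0,\infty))$ there are dyadically grid-constant $f_n,w_n$ with $f_n\to f$ and $w_n\to w$ in $\norm{\cdot}_\infty$, and after replacing, for each $n$, the two underlying dyadic grids by the finer one, $G_n$, both $f_n$ and $w_n$ are $G_n$-constant. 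By Lemma~\ref{lm:S^infty} the weights $w_n$ are bounded below by a common constant $\ul{c}>0$ and above by some $\ol{c}$.

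The core is a uniform Cauchy estimate. Fix $n,m$ and let $H$ be the finer of the dyadic grids $G_n$ and $G_m$; then $H$ is dyadic and all four functions $f_n,w_n,f_m,w_m$ are $H$-constant. Consequently Corollary~\ref{cor:avg-G} applies with the single grid $H$ and shows that $a_{\sigma,\mathrm{is}}^{w_n,H}(f_n)$ and $a_{\sigma,\mathrm{is}}^{w_m,H}(f_m)$ are representatives of $p_\sigma^{w_n}(f_n)$ and $p_\sigma^{w_m}(f_m)$, respectively. The crucial gain is that both inf-sup expressions now range over the \emph{same} index sets $\mathcal{L}_\sigma^H(x)$ and $\mathcal{U}_\sigma^H(x)$, and each occurring $L\cap U$ lies in $\mathcal{Q}_Q^{>}$. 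Hence Lemma~\ref{lm:difference-of-inf-sup-expressions}, applied pointwise in $x$ and followed by taking the supremum over $x$, gives
\[
\norm{p_\sigma^{w_n}(f_n)-p_\sigma^{w_m}(f_m)}_\infty \le \sup_{E\in\mathcal{Q}_Q^{>}}\big|\Av_{f_n,w_n}(E)-\Av_{f_m,w_m}(E)\big|.
\]
Routing the right-hand side through $\Av_{f,w}$ and invoking Lemma~\ref{lm:Av_f,w-bounded-and-equicont} twice shows it tends to $0$ as $n,m\to\infty$, so $(p_\sigma^{w_n}(f_n))$ is Cauchy and therefore convergent in $L^\infty(Q,\R)$.

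To identify the limit, note that since $\lambda(Q)<\infty$, convergence in $\norm{\cdot}_\infty$ entails convergence in $\norm{\cdot}_2$; thus $f_n\to f$ in $\norm{\cdot}_2$ and Lemma~\ref{lm:p^w(f)-contin-in-w,f} yields $p_\sigma^{w_n}(f_n)\to p_\sigma^w(f)$ in $\norm{\cdot}_2$. As the $L^\infty$-limit found above is also an $L^2$-limit, uniqueness of $2$-limits forces it to equal $p_\sigma^w(f)$, which is~\eqref{eq:approx-thm-infty-norm-assertion-1}. When $w$ is itself dyadically grid-constant, the rate estimate~\eqref{eq:approx-thm-infty-norm-assertion-2} is immediate from the $\norm{\cdot}_\infty$-contractivity of the fixed-weight operator $p_\sigma^w$ (Proposition~\ref{prop:properties-of-p}(ii)). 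Finally, the density of $S_\sigma(Q,\R)$ in $S_\sigma^\infty(Q,\R)$ follows just as in the $2$-norm case: for $\sigma$-monotonic $f\in S^\infty(Q,\R)$ pick $f_n\in S(Q,\R)$ with $f_n\to f$ in $\norm{\cdot}_\infty$, take $w_0\equiv 1$, note $p_\sigma^{w_0}(f_n)\in S_\sigma(Q,\R)$ by Theorem~\ref{thm:spec-case-grid-const-fct} and $p_\sigma^{w_0}(f)=f$, and apply~\eqref{eq:approx-thm-infty-norm-assertion-2}.

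I expect the main obstacle to be the bookkeeping forced by the changing grids: the averaging representation of Corollary~\ref{cor:avg-G} is bound to a grid on which the data are constant, so one cannot compare $p_\sigma^{w_n}(f_n)$ and $p_\sigma^{w_m}(f_m)$ term by term until both have been rewritten as inf-sup expressions over one common refined grid $H$; only after this reduction does Lemma~\ref{lm:difference-of-inf-sup-expressions} apply. A secondary wrinkle is that Lemma~\ref{lm:Av_f,w-bounded-and-equicont} is phrased for a single sequence, so the Cauchy version of the average estimate must be extracted by passing through the common limit $\Av_{f,w}$.
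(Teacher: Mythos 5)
Your proof is correct and follows essentially the same route as the paper's: pass to a common dyadic refinement so that both data pairs are constant on one grid, use Corollary~\ref{cor:avg-G} together with Lemma~\ref{lm:difference-of-inf-sup-expressions} and Lemma~\ref{lm:Av_f,w-bounded-and-equicont} to obtain a uniform Cauchy estimate, identify the $\norm{\cdot}_{\infty}$-limit via the $2$-norm convergence from Lemma~\ref{lm:p^w(f)-contin-in-w,f}, and prove the rate estimate and density exactly as in the $2$-norm case. The only (immaterial) difference is bookkeeping: you refine per pair, taking $H$ to be the finer of $G_n$ and $G_m$, whereas the paper works with the cumulative union $G_n := \bigcup_{m=1}^n G_m^f \cup G_m^w$ so that all earlier $f_m, w_m$ are constant on the current grid.
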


\begin{proof}
In view of the definitions of $S^{\infty}(Q,\R)$ and $S^{\infty}(Q,(0,\infty))$, there exist dyadic grids $G_n^f$, $G_n^w$ and $G_n^f$-constant functions $f_n:Q\to\R$ as well as $G_n^w$-constant functions $w_n:Q\to (0,\infty)$ such that
\begin{align} \label{eq:approx-thm-infty-norm-1}
f_n \underset{\norm{\cdot}_{\infty}}{\longrightarrow} f \qquad (n\to\infty)
\qquad \text{and} \qquad
w_n \underset{\norm{\cdot}_{\infty}}{\longrightarrow} w \qquad (n\to\infty). 
\end{align}
Since the grids $G_n^f$, $G_n^w$ are dyadic, the union $G_n := \bigcup_{m=1}^n G_m^f \cup G_m^w$ %is just the finest of the $2n$ grids in the union so that $G_n$ is dyadic as well and $f_n, w_n$ are also $G_n$-constant. 
is dyadic as well and the functions $f_n, w_n$ are also $G_n$-constant ($G_n$ is just the finest of the $2n$ grids in the union). In fact, we see that all the functions $f_m,w_m$ with $m \le n$ are $G_n$-constant. So, by Corollary~\ref{cor:avg-G}, $a_{\sigma,\#}^{w_m,G_n}(f_m)$ is a representative of $p_{\sigma}^{w_m}(f_m)$ for every $m \le n$ and $a_{\sigma,\#}^{w_n,G_n}(f_n)$ is a representative of $p_{\sigma}^{w_n}(f_n)$ with $\# \in \{\mathrm{is},\mathrm{si}\}$ and, thus,
\begin{align}  \label{eq:approx-thm-infty-norm-2}
\norm{p_{\sigma}^{w_n}(f_n) - p_{\sigma}^{w_m}(f_m)}_{\infty} 
&\le \sup_{x\in Q} \big| a_{\sigma,\#}^{w_n,G_n}(f_n)(x) - a_{\sigma,\#}^{w_m,G_n}(f_m)(x) \big| \notag \\
&\le \sup_{x\in Q} \sup_{(L,U) \in \mathcal{L}_{\sigma}^{G_n}(x) \times \mathcal{U}_{\sigma}^{G_n}(x)} \big| \Av_{f_n,w_n}(L\cap U) -  \Av_{f_m,w_m}(L\cap U) \big| \notag \\
&\le \sup_{E\in \mathcal{Q}_Q^{>}} \big| \Av_{f_n,w_n}(E) -  \Av_{f_m,w_m}(E) \big|
\qquad (m \le n)
\end{align}
by virtue of Lemma~\ref{lm:difference-of-inf-sup-expressions} and the boundedness part of Lemma~\ref{lm:Av_f,w-bounded-and-equicont}. In view of~\eqref{eq:approx-thm-infty-norm-1}, \eqref{eq:approx-thm-infty-norm-2} and the convergence part of Lemma~\ref{lm:Av_f,w-bounded-and-equicont}, we now see that $(p_{\sigma}^{w_n}(f_n))$ is a Cauchy sequence w.r.t.~$\norm{\cdot}_{\infty}$ and hence converges to some $f^* \in L^{\infty}(Q,\R)$ w.r.t.~$\norm{\cdot}_{\infty}$. Since $(p_{\sigma}^{w_n}(f_n))$, by~\eqref{eq:approx-thm-infty-norm-1} and Lemma~\ref{lm:p^w(f)-contin-in-w,f}, also converges to $p_{\sigma}^w(f)$ w.r.t.~the weaker norm $\norm{\cdot}_2$, the limits must coincide, which proves the desired convergence~\eqref{eq:approx-thm-infty-norm-assertion-1}. 
\smallskip

Also, the asserted estimate~\eqref{eq:approx-thm-infty-norm-assertion-2} on the convergence rate immediately follows by the $\norm{\cdot}_{\infty}$-contractivity of $p_{\sigma}^w$ (Proposition~\ref{prop:properties-of-p}). 
\smallskip

It remains to prove the density of $S_{\sigma}(Q,\R)$ in $S^{\infty}_{\sigma}(Q,\R)$. So, let $f \in S^{\infty}_{\sigma}(Q,\R)$ and, moreover,  choose $f_n \in S(Q,\R)$ with $f_n \longrightarrow f$ w.r.t.~$\norm{\cdot}_{\infty}$ and $w_0 := 1$. We then have, in particular, $f \in L^2_{\sigma}(Q,\R)$ by the definition of $S^{\infty}_{\sigma}(Q,\R)$ and therefore $f = p_{\sigma}^{w_0}(f)$. We also have $p_{\sigma}^{w_0}(f_n) \in S_{\sigma}(Q,\R)$ for every $n \in \N$ by Theorem~\ref{thm:spec-case-grid-const-fct}. Combining these facts with~\eqref{eq:approx-thm-infty-norm-assertion-2}, we see that $f$ lies in the $\norm{\cdot}_{\infty}$-closure of $S_{\sigma}(Q,\R)$, as desired.  
\end{proof}

\section{Characterization of monotonic regression functions}%{Characterization of monotonic regressors}

In this section, we deal with generalized monotonic regression problems, which naturally arise in maximum smoothed likelihood estimation~\cite{GrJo10}. We will show that for a given square-integrable function $f$ and a weight function $w$, the solution of a wide class of generalized monotonic regression problems is nothing but the solution $p_{\sigma}^w(f)$ of the standard monotonic regression problem treated so far. 
\smallskip

We begin by extending a well-known identity from the discrete case (Theorem~1.3.6 of~\cite{RoWrDy}) to non-discrete situations. %to non-discrete functions. %to the non-discrete case.  

\begin{lm} \label{lm:phi(f^*)}
Suppose $f \in L^2(Q,\R)$, $w \in S^{\infty}(Q,(0,\infty))$ and let $f^* := p_{\sigma}^w(f)$. Then
\begin{align} \label{eq:phi(f^*)}
\scprd{f-f^*,\phi\circ f^*}_{2,w} = 0
%\int_Q (f-f^*) \cdot (\phi\circ f^*) \, w \d\lambda = 0
\end{align}
for every function $\phi:\R\to\R$ of bounded variation. 
\end{lm}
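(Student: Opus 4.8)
The plan is to reduce the claim to two elementary ``building-block'' identities and then assemble the general case by a Lebesgue--Stieltjes integration. Since a function of bounded variation is a difference of two bounded increasing functions and $\scprd{f-f^*,\cdot}_{2,w}$ is linear, it suffices to treat a bounded increasing $\phi$. For such a $\phi$, writing $\mu_\phi$ for its Stieltjes measure, one has the representation $\phi(t)=\phi(-\infty)+\int_{\R}\chi_{(s,\infty)}(t)\,d\mu_\phi(s)$, so that $\phi\circ f^*=\phi(-\infty)\cdot 1+\int_{\R}\chi_{\{f^*>s\}}\,d\mu_\phi(s)$ pointwise (with the obvious variant using $\chi_{\{f^*\ge s\}}$ at the jump levels). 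Hence, if I can show that
\[
\scprd{f-f^*,1}_{2,w}=0 \qquad\text{and}\qquad \scprd{f-f^*,\chi_{\{f^*>s\}}}_{2,w}=\scprd{f-f^*,\chi_{\{f^*\ge s\}}}_{2,w}=0 \quad (s\in\R),
\]
then Fubini's theorem (justified below) yields $\scprd{f-f^*,\phi\circ f^*}_{2,w}=\phi(-\infty)\scprd{f-f^*,1}_{2,w}+\int_\R\scprd{f-f^*,\chi_{\{f^*>s\}}}_{2,w}\,d\mu_\phi(s)=0$, as desired.

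The constant identity is immediate from the geometric characterisation: applying Proposition~\ref{prop:geometr-char-of-p}(iii) to the $\sigma$-monotonic competitors $g=\pm 1$ gives $\scprd{f-f^*,\pm 1}_{2,w}\le 0$ and hence $\scprd{f-f^*,1}_{2,w}=0$. For the threshold identities I would first establish, for every $c\in\R$, the ramp identity $\scprd{f-f^*,(f^*-c)^+}_{2,w}=0$. The key observation is that if $f_0^*$ is an essentially $\sigma$-monotonic representative of $f^*$ and $h:\R\to\R$ is increasing, then $h\circ f_0^*$ is again essentially $\sigma$-monotonic, and it lies in $L^2$ because $Q$ has finite measure. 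Taking $h(t)=t+\eps(t-c)^+$ with $\eps>0$ produces the competitor $g=f^*+\eps(f^*-c)^+\in L^2_\sigma(Q,\R)$, and Proposition~\ref{prop:geometr-char-of-p}(ii) gives $0\le\scprd{f-f^*,f^*-g}_{2,w}=-\eps\scprd{f-f^*,(f^*-c)^+}_{2,w}$; taking instead $h(t)=\min(t,c)$ produces $g=\min(f^*,c)=f^*-(f^*-c)^+\in L^2_\sigma(Q,\R)$ and yields the reverse inequality, so the ramp identity holds. The indicator identities then follow from the difference quotients $\tfrac1h\big[(f^*-c)^+-(f^*-c-h)^+\big]\to\chi_{\{f^*>c\}}$ and $\tfrac1h\big[(f^*-c+h)^+-(f^*-c)^+\big]\to\chi_{\{f^*\ge c\}}$ (pointwise as $h\downarrow 0$ and dominated by $1$): each difference quotient is orthogonal to $f-f^*$ by the ramp identity, so dominated convergence transfers the orthogonality to the limits.

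It then remains to justify the Fubini step: the integrand $(x,s)\mapsto (f-f^*)(x)\,w(x)\,\chi_{\{f^*(x)>s\}}$ is jointly measurable and, since $(f-f^*)w\in L^1(Q,\R)$ (as $Q$ has finite measure, $f-f^*\in L^2$ and $w$ is bounded) while $\mu_\phi(\R)=\phi(+\infty)-\phi(-\infty)<\infty$, it is integrable with respect to $\lambda\otimes\mu_\phi$, which legitimises interchanging $\int_Q$ and $\int_\R$. I expect the real obstacle to be precisely this orthogonality to threshold indicators. Deriving it ``for free'' from the discrete identity (Theorem~1.3.6 of~\cite{RoWrDy}), by passing to the limit along the $2$-norm approximation of Theorem~\ref{thm:approx-thm-2-norm}, stumbles on the fact that $\phi\circ f_n^*\to\phi\circ f^*$ almost everywhere fails exactly on the level sets $\{f^*=c\}$ at which $\phi$ jumps; the discontinuities of $\phi$ (which is why the bounded-variation hypothesis enters) must therefore be absorbed by a direct variational argument such as the one above rather than by a naive passage to the limit.
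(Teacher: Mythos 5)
Your proof is correct, but it follows a genuinely different route from the paper's. The paper first proves the identity for \emph{bounded continuous} $\phi$ by transferring the discrete identity (Theorem~1.3.6 of~\cite{RoWrDy}) through its grid-constant approximation machinery (Theorem~\ref{thm:spec-case-grid-const-fct} together with Theorem~\ref{thm:approx-thm-2-norm}), then handles $\chi_{[c,\infty)}$ and $\chi_{(c,\infty)}$ by approximating $\phi$ itself with continuous ramps converging pointwise \emph{everywhere} (including at the jump, so the level-set problem you flag never arises) and applying dominated convergence in $x$, then assembles bounded increasing $\phi$ via the continuous-plus-jumps decomposition with uniformly convergent jump sums, and finally passes to BV. You instead work intrinsically in the continuum: after the same Jordan-decomposition reduction, you use a Lebesgue--Stieltjes layer-cake representation plus Fubini to reduce everything to the threshold indicators $\chi_{\{f^*\ge c\}}$, $\chi_{\{f^*> c\}}$, and you obtain orthogonality to these by a variational perturbation argument -- the competitors $f^*+\eps(f^*-c)^+$ and $\min(f^*,c)$ give the ramp identity $\scprd{f-f^*,(f^*-c)^+}_{2,w}=0$, and difference quotients of ramps with domination by $1$ give the indicators. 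This uses only Proposition~\ref{prop:geometr-char-of-p} and nothing from Sections~3--4 or the discrete literature, so it is more self-contained and in fact works verbatim on any finite complete measure space with a partial order, not just on $Q$; what it gives up is precisely what the paper wants to showcase, namely that the identity can be harvested from the discrete theory via the computational methodology. Your closing diagnosis of where a naive limit along $f_n^*$ would fail for discontinuous $\phi$ is accurate, and it is exactly why the paper, too, treats discontinuities by approximating $\phi$ rather than $f^*$. One small imprecision: the blanket claim that $h\circ f_0^*$ lies in $L^2$ for increasing $h$ ``because $Q$ has finite measure'' is false in general (take $h$ growing superlinearly and $f^*$ unbounded); it holds for your two choices of $h$ because they are Lipschitz, and you should say so.
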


\begin{proof}
We proceed in four steps.
As a first step, we prove the assertion for bounded continuous functions $\phi$. 
So, let $\phi$ be bounded and continuous. Choose dyadic grids $G_n$ and $G_n$-constant functions $f_n: Q \to \R$ and $w_n:Q \to (0,\infty)$ according to Theorem~\ref{thm:approx-thm-2-norm} and write $f_n^* := p_{\sigma}^{w_n}(f_n)$. We then have, by~\eqref{eq:approx-thm-2-norm-assertion-1} of that theorem, that
\begin{align} \label{eq:phi(f^*)-1}
f_n^* \underset{\norm{\cdot}_2}{\longrightarrow} f^* \qquad (n\to\infty)
\end{align}
and therefore, by the assumed continuity and boundedness of $\phi$ and the dominated convergence theorem, %also
\begin{align} \label{eq:phi(f^*)-2}
\phi\circ f_n^* \underset{\norm{\cdot}_2}{\longrightarrow} \phi\circ f^* \qquad (n\to\infty).
\end{align}
Also, by virtue of Theorem~\ref{thm:spec-case-grid-const-fct}, we have that
\begin{align} \label{eq:phi(f^*)-3}
f_n^* = \sum_{x\in G} f_{n|}^*(x) \chi_{C^{G_n}(x)} \qquad (n\in\N),
\end{align}
where $f_{n|}^* := p_{\sigma}^{w_{n|}}(f_{n|})$ with $f_{n|}:= f_n|_{G_n}$ and $w_{n|}:= w_n|_{G_n}$. 
So, on the one hand, it follows by~\eqref{eq:phi(f^*)-3} and the $G_n$-constancy of $f_n$, $f_n^*$, $w_n$ and the equidistance of $G_n$ that
\begin{align} \label{eq:phi(f^*)-4}
\scprd{f_n-f_n^*,\phi\circ f_n^*}_{2,w_n} = \sum_{x\in G_n} \big(f_{n|}(x)-f_{n|}^*(x)\big) \phi(f_{n|}^*(x)) w_n(x) \cdot \operatorname{vol}_{G_n} = 0
\end{align}
for all $n\in\N$, where in the second equality we used the discrete analog  of the lemma (Theorem~1.3.6 in~\cite{RoWrDy}). And, on the other hand, it follows by the choice of $f_n$, $w_n$ and by~\eqref{eq:phi(f^*)-1}, \eqref{eq:phi(f^*)-2} that
\begin{align} \label{eq:phi(f^*)-5}
\scprd{f_n-f_n^*,\phi\circ f_n^*}_{2,w_n} 
%= \scprd{f_n-f_n^*,(\phi\circ f_n^*)\, w_n}_{2} 
\longrightarrow  \scprd{f-f^*,\phi\circ f^*}_{2,w} \qquad (n\to\infty).
\end{align}
Combining~\eqref{eq:phi(f^*)-4} and~\eqref{eq:phi(f^*)-5}, we obtain the desired equation~\eqref{eq:phi(f^*)} for bounded continuous functions $\phi$. %the special class of functions from this first step. 
\smallskip

As a second step, we prove the assertion for functions $\phi$ of the form $\chi_{[c,\infty)}$ or $\chi_{(c,\infty)}$ with $c \in \R$. 
So, let $\phi = \chi_{[c,\infty)}$ or $\phi = \chi_{(c,\infty)}$ with some $c \in \R$. We can then, of course, find bounded continuous functions $\phi_n: \R \to [0,1]$ (ramp functions with steeper and steeper ramps) such that
\begin{align} \label{eq:phi(f^*)-6}
\phi_n(u) \longrightarrow \phi(u) \qquad (n\to\infty) 
\qquad \text{and} \qquad 
|\phi_n(u)| \le 1 \qquad (n\in\N)
\end{align}
for every $u \in \R$. Consequently, the desired equation~\eqref{eq:phi(f^*)} follows by the dominated convergence theorem and the first step.
\smallskip

As a third step, we prove the assertion for bounded monotonically increasing functions $\phi$. 
So, let $\phi$ be bounded and monotonically increasing. We can then decompose $\phi$ into a bounded continuous part $\phi_{\mathrm{c}}: \R \to\R$ and a purely discrete part:
\begin{align} \label{eq:phi(f^*)-7}
\phi(u) = \phi_{\mathrm{c}}(u) + \sum_{k\in K} h_k^- \chi_{[c_k,\infty)}(u) + \sum_{k\in K} h_k^+ \chi_{(c_k,\infty)}(u) 
\qquad (u\in\R),
\end{align} 
where $(c_k)_{k\in K}$ is an enumeration of the countably many discontinuities of $\phi$, %and $h_k^- := \phi(c_k)-\phi(c_k-)$  and $h_k^+ := \phi(c_k+)-\phi(c_k)$.
\begin{align*}
h_k^- := \phi(c_k)-\phi(c_k-) \qquad \text{and} \qquad h_k^+ := \phi(c_k+)-\phi(c_k),
\end{align*}
and the sums in~\eqref{eq:phi(f^*)-7} are uniformly convergent (Lemma~1.6.3 of~\cite{Ta}). Consequently, the desired equation~\eqref{eq:phi(f^*)} follows by the first step and second step.
\smallskip

As a fourth step, we finally exploit that every function $\phi: \R\to \R$ of bounded variation can be written as the difference  $\phi = \phi_1 - \phi_2$ of two bounded monotonically increasing functions $\phi_1, \phi_2$ (Proposition~4.4.2 of~\cite{Co}). And therefore, the general assertion of the lemma follows by the third step.
\end{proof}

\begin{cor} \label{cor:Av_f,w(f^*=c)=c}
Suppose $f \in L^2(Q,\R)$, $w \in S^{\infty}(Q,(0,\infty))$ and let $f^* := p_{\sigma}^w(f)$. Then
\begin{align} \label{eq:Av_f,w(f^*=c)=c-assertion-1}
\Av_{f,w}(L \cap \{f^*\ge c\}) \ge c
\qquad \text{and} \qquad
\Av_{f,w}(\{f^*\le c\} \cap U) \le c 
\end{align}
for every $c \in \R$ and every $L \in \mathcal{L}_{\sigma}^{\mathrm{mb}} := \mathcal{L}_{\sigma} \cap \mathcal{Q}_Q$ and $U \in \mathcal{U}_{\sigma}^{\mathrm{mb}} := \mathcal{U}_{\sigma} \cap \mathcal{Q}_Q$ such that $L \cap \{f^*\ge c\}$ and $\{f^*\le c\} \cap U$ are non-null sets. In particular, 
\begin{align} \label{eq:Av_f,w(f^*=c)=c-assertion-2}
\Av_{f,w}(\{f^*=c\}) = c
\end{align}
for every $c \in \R$ for which $\{f^*=c\}$ is a non-null set. Additionally, %if $f > a$ or $f < b$ almost everywhere, then also $f^* > a$ or $f^* < b$ almost everywhere, respectively. 
if $I \subset \R$ is any interval with $f(x) \in I$ for a.e.~$x\in Q$, then one also has $f^*(x) \in I$ for a.e.~$x \in Q$. 
\end{cor}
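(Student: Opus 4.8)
The plan is to derive the whole corollary from two facts about $f^* = p_\sigma^w(f)$. The first is the one-sided estimate coming from the geometric characterization Proposition~\ref{prop:geometr-char-of-p}(iii): for a measurable $\sigma$-upper set $U$ the function $\chi_U$ is $\sigma$-monotonic (Lemma~\ref{lm:char-isotonic-fcts}) and hence lies in $L^2_\sigma(Q,\R)$ since $\lambda(Q)<\infty$, so taking $g=\chi_U$ yields
\begin{align}
\int_U (f-f^*)\,w\d\lambda = \scprd{f-f^*,\chi_U}_{2,w} \le 0.
\end{align}
The second is the \emph{exact} identity from Lemma~\ref{lm:phi(f^*)} applied to the bounded-variation function $\phi=\chi_{[c,\infty)}$, namely $\int_{\{f^*\ge c\}}(f-f^*)\,w\d\lambda = 0$. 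The estimate by itself only gives the wrong-signed bound on the super-level set; it is this exact identity that makes the argument run, which is why Lemma~\ref{lm:phi(f^*)} is the crucial input.

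For the first inequality I would fix an essentially isotonic representative of $f^*$, so that $\{f^*\ge c\}$ is, up to a null set, a $\sigma$-upper set. Given $L\in\mathcal{L}_\sigma^{\mathrm{mb}}$, the complement $Q\setminus L$ is a $\sigma$-upper set (Lemma~\ref{lm:char-isotonic-fcts}), hence $(Q\setminus L)\cap\{f^*\ge c\}$ is essentially a $\sigma$-upper set and the estimate applies to it. Subtracting it from the exact identity gives
\begin{align}
\int_{L\cap\{f^*\ge c\}} (f-f^*)\,w\d\lambda = -\int_{(Q\setminus L)\cap\{f^*\ge c\}}(f-f^*)\,w\d\lambda \ge 0.
\end{align}
Since $f^*\ge c$ and $w>0$ on $\{f^*\ge c\}$, also $\int_{L\cap\{f^*\ge c\}}(f^*-c)\,w\d\lambda\ge 0$; adding the two and dividing by the positive number $\int_{L\cap\{f^*\ge c\}} w\d\lambda$ yields $\Av_{f,w}(L\cap\{f^*\ge c\})\ge c$. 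Because only integrals enter, the argument is insensitive to null-set changes, so it holds verbatim for any \emph{essentially} $\sigma$-lower set $L$. I would then obtain the second inequality by the $\sigma\to-\sigma$ symmetry: applying the first inequality to $-f$ and signature $-\sigma$ and invoking $p_{-\sigma}^w(-f)=-f^*$ and $\mathcal{U}_\sigma=\mathcal{L}_{-\sigma}$ (Lemma~\ref{lm:minus-sigma}), together with $\Av_{-f,w}=-\Av_{f,w}$ and $\{-f^*\ge -c\}=\{f^*\le c\}$, converts $\ge c$ into $\le c$ for $\{f^*\le c\}\cap U$.

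The two special consequences are then read off directly. For~\eqref{eq:Av_f,w(f^*=c)=c-assertion-2} I would apply the first inequality with the (essentially) $\sigma$-lower set $L=\{f^*\le c\}$ and the second with the (essentially) $\sigma$-upper set $U=\{f^*\ge c\}$; since $\{f^*\le c\}\cap\{f^*\ge c\}=\{f^*=c\}$, these give $\Av_{f,w}(\{f^*=c\})\ge c$ and $\le c$, hence equality. For the interval statement, write $a:=\inf I$ and $b:=\sup I$. Taking $L=Q$ in the first inequality gives $\Av_{f,w}(\{f^*\ge c\})\ge c$ whenever $\{f^*\ge c\}$ is non-null; if $b<\infty$ and $c>b$, this contradicts $f\le b$ a.e.\ (which forces $\Av_{f,w}(\{f^*\ge c\})\le b<c$), so $\{f^*\ge c\}$ is null, and letting $c\downarrow b$ yields $f^*\le b$ a.e.; if moreover $b\notin I$, then $f<b$ a.e.\ and the same computation with $c=b$ gives the strict bound $f^*<b$ a.e. The lower endpoint is handled symmetrically with $U=Q$ in the second inequality, and together these place $f^*$ in $I$ almost everywhere, matching whichever endpoints $I$ contains.

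The main obstacle is the combination step in the first inequality: recognizing that the right object is the splitting of $\{f^*\ge c\}$ along $L$, so that the exact identity on the whole super-level set \emph{minus} the one-sided estimate on its upper-set part $(Q\setminus L)\cap\{f^*\ge c\}$ pins down the correct sign on $L\cap\{f^*\ge c\}$. The only technical nuisance is keeping track of the distinction between genuine and essential upper/lower sets, which is harmless here because every quantity in sight depends only on integrals.
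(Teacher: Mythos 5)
Your proof is correct, and for the two averaging inequalities in \eqref{eq:Av_f,w(f^*=c)=c-assertion-1} it is essentially the paper's own argument in reorganized form: the paper likewise combines the exact identity $\scprd{f-f^*,\chi_{[c,\infty)}\circ f^*}_{2,w}=0$ (Lemma~\ref{lm:phi(f^*)}) with the one-sided bound $\scprd{f-f^*,\chi_{Q\setminus L}\cdot\chi_{[c,\infty)}\circ f^*}_{2,w}\le 0$ from Proposition~\ref{prop:geometr-char-of-p}, and since $\chi_{Q\setminus L}\cdot\chi_{[c,\infty)}\circ f^* = \chi_{(Q\setminus L)\cap\{f^*\ge c\}}$, your splitting of the super-level set along $L$ is literally the same decomposition; the second inequality is obtained by the same $\sigma\to-\sigma$ symmetry via Lemma~\ref{lm:minus-sigma}. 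For the equality \eqref{eq:Av_f,w(f^*=c)=c-assertion-2} you take the route the paper mentions only parenthetically as an alternative (extend the inequalities to essentially lower/upper sets and apply them to $L=\{f^*\le c\}$ and $U=\{f^*\ge c\}$), whereas the paper's primary argument applies Lemma~\ref{lm:phi(f^*)} once more, with $\phi=\chi_{\{c\}}$; both are sound, and your null-set bookkeeping justifying the extension is exactly what the paper's remark requires. The one genuinely different part is the interval preservation: the paper reduces it to preservation of the inequalities $f\ge c$, $f\le c$, $f>c$, $f<c$, settling the non-strict ones by order- and constant-preservation (Propositions~\ref{prop:elem-properties-of-p} and~\ref{prop:properties-of-p}) and the strict ones by the identity $\int_{\{f^*=c\}}(f-c)w\d\lambda=0$, while you derive everything from the already-proven averaging inequalities with $L=Q$ resp.\ $U=Q$. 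Your route is more self-contained -- it needs no input beyond the corollary's first assertion -- at the cost of a small case analysis over the endpoints of $I$; the paper's route is shorter because it reuses general properties of the projection operator. In particular your strict-endpoint step ($f<b$ a.e.\ forces $\Av_{f,w}(\{f^*\ge b\})<b$ strictly on a non-null set, so $\{f^*\ge b\}$ must be null) is valid.
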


\begin{proof}
We proceed in four steps. 
As a first step, we prove~(\ref{eq:Av_f,w(f^*=c)=c-assertion-1}.a). 
So, let $c \in \R$ and $L \in \mathcal{L}_{\sigma}^{\mathrm{mb}}$. We then have
\begin{align} \label{eq:Av_f,w(f^*=c)=c-1}
&\int_{L\cap\{f^*\ge c\}} fw \d\lambda - c \int_{L\cap\{f^*\ge c\}} w \d\lambda \ge \int_{L\cap\{f^*\ge c\}} (f-f^*)w \d\lambda \notag \\
&\qquad \qquad = \int_{\{f^*\ge c\}} (f-f^*)w \d\lambda - \int_{Q\setminus L \cap \{f^*\ge c\}} (f-f^*)w \d\lambda \notag \\
&\qquad \qquad = \scprd{f-f^*,\chi_{[c,\infty)}\circ f^*}_{2,w} - \scprd{f-f^*,\chi_{Q\setminus L} \cdot \chi_{[c,\infty)} \circ f^*}_{2,w}.
\end{align}
Since $\chi_{[c,\infty)}$ is of bounded variation, the first scalar product on the right-hand side of~\eqref{eq:Av_f,w(f^*=c)=c-1} is equal to $0$ by Lemma~\ref{lm:phi(f^*)}. Since, moreover, $\chi_{Q\setminus L}$ is $\sigma$-monotonic by Lemma~\ref{lm:char-isotonic-fcts} and since $f^* \in L^2_{\sigma}(Q,\R)$ and $\chi_{[c,\infty)}$ is monotonically increasing, it follows that 
\begin{align*}
\chi_{Q\setminus L} \cdot \chi_{[c,\infty)} \circ f^* \in L^2_{\sigma}(Q,\R)
\end{align*}
and therefore the second scalar product on the right-hand side of~\eqref{eq:Av_f,w(f^*=c)=c-1} is less than or equal to $0$ by Proposition~\ref{prop:geometr-char-of-p}. Consequently, 
\begin{align} \label{eq:Av_f,w(f^*=c)=c-2}
\int_{L\cap\{f^*\ge c\}} fw \d\lambda - c \int_{L\cap\{f^*\ge c\}} w \d\lambda \ge 0,
\end{align}
and therefore~(\ref{eq:Av_f,w(f^*=c)=c-assertion-1}.a) holds true whenever $L\cap\{f^*\ge c\}$ is non-null, as desired. 
\smallskip

As a second step, we prove~(\ref{eq:Av_f,w(f^*=c)=c-assertion-1}.b). 
So, let $c \in \R$ and $U \in \mathcal{U}_{\sigma}^{\mathrm{mb}}$ such that $\{f^*\le c\} \cap U$ is non-null. We then have
\begin{align} \label{eq:Av_f,w(f^*=c)=c-3}
-f^* = -p_{\sigma}^w(f) = p_{-\sigma}^w(-f) =:(-f)^*
\qquad \text{and} \qquad
U \in \mathcal{U}_{\sigma}^{\mathrm{mb}} = \mathcal{L}_{-\sigma}^{\mathrm{mb}}
\end{align}
by virtue of Lemma~\ref{lm:minus-sigma} and therefore
\begin{align} \label{eq:Av_f,w(f^*=c)=c-4}
\Av_{f,w}(\{f^*\le c\} \cap U) &= \Av_{f,w}(\{(-f)^*\ge -c\} \cap U) \notag \\
&= -\Av_{-f,w}(U \cap \{(-f)^*\ge -c\}) \le c
\end{align}
by virtue of the first step applied to $-f, w$ and $-\sigma$, as desired. 
\smallskip

As a third step, we prove~\eqref{eq:Av_f,w(f^*=c)=c-assertion-2}. Indeed, applying Lemma~\ref{lm:phi(f^*)} with $\phi := \chi_{\{c\}}$, we get
\begin{align} \label{eq:Av_f,w(f^*=c)=c-5}
\int_{\{f^*=c\}} f-c \d\lambda = \int_{\{f^*=c\}} (f-f^*)\cdot \chi_{\{c\}}\circ f^* \d\lambda = 0
\end{align}
for every $c \in \R$. And from this, in turn, \eqref{eq:Av_f,w(f^*=c)=c-assertion-2} is obvious. (Alternatively, we could also observe that~\eqref{eq:Av_f,w(f^*=c)=c-assertion-1} extends to $L \in \ol{ \mathcal{L} }_{\sigma}^{\mathrm{mb}}$ and $U \in \ol{ \mathcal{U} }_{\sigma}^{\mathrm{mb}}$ and apply this extended relation to $L := \{f^* \le c\}$ and $U := \{f^* \ge c\}$ which do belong, respectively, to $\ol{ \mathcal{L} }_{\sigma}^{\mathrm{mb}}$ and $\ol{ \mathcal{U} }_{\sigma}^{\mathrm{mb}}$ by the remarks around~\eqref{eq:extend-upper-set-on-X-minus-N-to-upper-set-on-X}.) %Um die letzten beiden Elementrelationen zu sehen brauche aber auch noch, dass jede \sigma-untere /-obere Menge auf Q\setminus N enthalten ist in einer \sigma-unteren/-oberen Menge auf Q!)
\smallskip

As a fourth step, we prove the remaining interval-preservation property. What we have to show for that purpose is that the projection $p_{\sigma}^w$ preserves all strict and all non-strict inequalities (in the a.e.~sense) of the forms 
\begin{align} \label{eq:Av_f,w(f^*=c)=c-6}
f > c, \qquad f < c, \qquad f \ge c, \qquad f \le c 
\end{align}
between $f$ and a constant $c \in \R$, respectively. In view of~Proposition~\ref{prop:elem-properties-of-p}(ii) and~\ref{prop:properties-of-p}(ii), the preservation of the non-strict inequalities in~\eqref{eq:Av_f,w(f^*=c)=c-6} is clear. And in view of~(\ref{eq:Av_f,w(f^*=c)=c-4}.a), it is sufficient to prove the preservation of just one of the strict inequalities in~\eqref{eq:Av_f,w(f^*=c)=c-6}. So, let $c \in \R$ and %$f(x) > c$ for a.e.~$x\in Q$.
\begin{align} \label{eq:Av_f,w(f^*=c)=c-7}
f(x) > c \text{ \,for a.e.~}x\in Q 
\end{align}
We then have at least $f^* = p_{\sigma}^w(f) \ge p_{\sigma}^w(c) = c$ by the preservation of the non-strict inequalities in~\eqref{eq:Av_f,w(f^*=c)=c-6}. It thus remains to show that $\{f^* = c\}$ is a null set -- but this is an immediate consequence of~\eqref{eq:Av_f,w(f^*=c)=c-5} and~\eqref{eq:Av_f,w(f^*=c)=c-7}.
\end{proof}

\begin{lm} \label{lm:appr-by-bounded-mon-phi_n}
If $\phi: I \to \R \cup \{\pm\infty\}$ is monotonically increasing on an interval $I \subset \R$ and finite on the interior of $I$, %such that $\phi(I^{\circ}) \subset \R$. 
then there exist bounded monotonically increasing functions $\phi_n:\R \to \R$ such that
\begin{align} \label{eq:appr-by-bounded-mon-phi_n}
\phi_n \circ g \underset{\norm{\cdot}_2}\longrightarrow \phi \circ \phi \qquad (n\to \infty)
\end{align}
for every function $g: Q\to I$ with $\phi \circ g \in L^2(Q,\R)$.
\end{lm}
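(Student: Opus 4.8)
The plan is to produce the $\phi_n$ by truncating a fixed monotone extension of $\phi$ at the heights $\pm n$. First I would extend $\phi$ to a monotonically increasing, extended-real-valued function $\psi:\R \to \R\cup\{\pm\infty\}$ by setting $\psi := \phi$ on $I$, $\psi := +\infty$ at every point lying to the right of $I$, and $\psi := -\infty$ at every point lying to the left of $I$; since $\phi$ is increasing on $I$, this $\psi$ is increasing on all of $\R$ and it coincides with $\phi$ on $I$, which is all that matters for compositions with functions taking values in $I$. For each $n\in\N$ I would then define $\phi_n(u) := \max(-n,\min(n,\psi(u)))$. The outer truncation converts the values $\pm\infty$ of $\psi$ into $\pm n$, so each $\phi_n$ is a genuine real-valued function bounded by $n$; and as $\min(n,\cdot)$ and $\max(-n,\cdot)$ preserve monotonicity, each $\phi_n$ is monotonically increasing. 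Thus the $\phi_n$ have the claimed structural properties irrespective of $g$.

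It then remains to verify the $L^2$-convergence for a fixed $g:Q\to I$ with $\phi\circ g \in L^2(Q,\R)$. Since $\phi\circ g$ is square-integrable, it is finite almost everywhere; hence for a.e.~$x\in Q$ the value $\psi(g(x)) = \phi(g(x))$ is a finite real number, and as soon as $n \ge |\phi(g(x))|$ one has $\phi_n(g(x)) = \phi(g(x))$. This gives pointwise almost-everywhere convergence $\phi_n\circ g \to \phi\circ g$. Moreover, truncation never increases the absolute value, so $|\phi_n\circ g| \le |\phi\circ g|$ a.e., and therefore $|\phi_n\circ g - \phi\circ g|^2 \le 4\,|\phi\circ g|^2 \in L^1(Q)$. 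The dominated convergence theorem then yields $\norm{\phi_n\circ g - \phi\circ g}_2 \to 0$, which is the assertion.

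I expect no serious obstacle in this argument, which is a standard truncate-and-dominate scheme. The only points needing a little care are the bookkeeping at the endpoints of $I$, where $\phi$ may legitimately attain the values $\pm\infty$, and the elementary observation that the square-integrability of $\phi\circ g$ forces $\phi\circ g$ to be finite a.e.; both are precisely what make the truncation and the dominated convergence step go through cleanly.
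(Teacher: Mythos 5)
Your proof is correct, but it truncates in a genuinely different way than the paper does. The paper truncates in the \emph{domain}: it chooses $\alpha_n \searrow \inf I$ and $\beta_n \nearrow \sup I$ inside $\operatorname{int} I$ and sets $\phi_n := \phi(\alpha_n)$ on $(-\infty,\alpha_n)$, $\phi_n := \phi$ on $[\alpha_n,\beta_n]$, and $\phi_n := \phi(\beta_n)$ on $(\beta_n,\infty)$, so that boundedness of $\phi_n$ comes from the finiteness of $\phi$ on $\operatorname{int} I$; the $L^2$-error then splits into two tail terms $h_n^{\pm}$ supported on $\{g<\alpha_n\}$ and $\{g>\beta_n\}$, which decrease monotonically in $n$ and are dominated by the integrable $h_1^{\pm}$. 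You instead truncate in the \emph{range}, clipping the extended function at heights $\pm n$ after extending $\phi$ by $\mp\infty$ off $I$. Your route buys two small simplifications. First, the dominating function is immediate and uniform: clipping toward $0$ gives $|\phi_n\circ g| \le |\phi\circ g|$ almost everywhere, hence the single majorant $4\,|\phi\circ g|^2 \in L^1(Q)$, with no need to track the two tails separately. Second, measurability of $\phi_n\circ g$ is automatic, since $\phi_n\circ g = T_n\circ(\phi\circ g)$ with the continuous clipping map $T_n(t) := \max(-n,\min(n,t))$; in the paper's construction $\phi_n\circ g$ involves the sets $\{g<\alpha_n\}$ and $\{g>\beta_n\}$, whose measurability is not part of the hypotheses (only $\phi\circ g$ is assumed to lie in $L^2$) -- this is harmless there because $\phi$ is monotone and the integrands vanish wherever those sets differ from sublevel/superlevel sets of $\phi\circ g$, but your argument sidesteps the issue entirely. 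What the paper's horizontal truncation buys in exchange is mainly that each $\phi_n$ agrees exactly with $\phi$ on the whole interval $[\alpha_n,\beta_n]$, exhausting $\operatorname{int} I$; for the purposes of this lemma that is not needed, and both arguments close with the same dominated-convergence step.
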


\begin{proof}
Write $\alpha := \inf I \in \R \cup \{-\infty\}$ and $\beta := \sup I \in \R \cup \{\infty\}$ and choose $\alpha_n, \beta_n \in (\alpha,\beta) = \operatorname{int} I$ with
\begin{align} \label{eq:appr-by-bounded-mon-phi_n-1}
\alpha_n \searrow \alpha \qquad (n\to\infty) 
\qquad \text{and} \qquad
\beta_n \nearrow \beta \qquad (n\to\infty).
\end{align}
We then define the functions $\phi_n$ by
\begin{align} \label{eq:appr-by-bounded-mon-phi_n-2}
\phi_n(u) := \phi(\alpha_n) \chi_{(-\infty,\alpha_n)}(u) + \phi(u) \chi_{[\alpha_n,\beta_n]}(u) + \phi(\beta_n) \chi_{(\beta_n,\infty)}(u)
\qquad (u \in\R).
\end{align}
It is straightforward to verify that $\phi_n$ is bounded and monotonically increasing and it remains to establish~\eqref{eq:appr-by-bounded-mon-phi_n}. So, let $g: Q\to I$ be a function with $\phi \circ g \in L^2(Q,\R)$. It then follows directly from the definition~\eqref{eq:appr-by-bounded-mon-phi_n-2} that
\begin{align} \label{eq:appr-by-bounded-mon-phi_n-3}
\norm{\phi_n\circ g - \phi\circ g}_2^2 = \int_Q h_n^-(x) \d x + \int_Q h_n^+(x)\d x
\end{align}
where $h_n^-(x) := |\phi(\alpha_n)-\phi(g(x))|^2 \chi_{\{g<\alpha_n\}}(x)$ and $h_n^+(x) := |\phi(\beta_n)-\phi(g(x))|^2 \chi_{\{g>\beta_n\}}(x)$ for $x \in Q$ and $n \in \N$. Since $\phi$ and $(\alpha_n)$, $(\beta_n)$ are monotonic, we easily see that
\begin{align} \label{eq:appr-by-bounded-mon-phi_n-4}
h_n^{\pm}(x) \longrightarrow 0 \qquad (n\to\infty)
\end{align}
for every $x \in Q$ and that $(h_n^{\pm})$ is monotonically decreasing and thus
\begin{align}
0 \le h_{n+1}^{\pm} \le h_n^{\pm} \le h_1^{\pm} \qquad (n\in\N).
\end{align}
Since $h_1^{\pm}$ is integrable by our assumption on $g$, the right-hand side of~\eqref{eq:appr-by-bounded-mon-phi_n-3} converges to $0$ as $n\to\infty$ by the dominated convergence theorem, as desired. 
\end{proof}

With the above lemmas at hand, we can now establish the main result of the section. It is a generalization of a result from~\cite{GrJo10} (Theorem~1), where the case of univariate and continuous functions $f$ and $w$ is considered. We proceed in a very %fairly/quite 
different way than~\cite{GrJo10}. %because the strategy of proof from~\cite{GrJo10} cannot %, to the best of our knowledge, 
%be (easily) carried over to multivariate situations. %does not seem to carry over to the multivariate case. 
As in~\cite{Ro}, by $\Phi'_-(u)$ and $\Phi'_+(u)$ we mean the left or, respectively, the right derivative of the function $\Phi$ at $u \in \R$.

\begin{thm} \label{thm:char-thm}
Suppose $f \in L^2(Q,\R)$ and $w\in S^{\infty}(Q,(0,\infty))$ and write $f^* := p_{\sigma}^w(f)$. Suppose further that $\Phi: I \to \R$ is a convex function on an interval $I \subset \R$ and let $\phi: I \to\R \cup \{\pm\infty\}$ be any function with $\Phi_-'(u) \le \phi(u) \le \Phi_+'(u)$ for $u \in I$,  
%let $\phi: I \to\R \cup \{\pm\infty\}$ be any subgradient function of $\Phi$, that is, any monotonically increasing function with $\phi(u) \in \partial \Phi(u)$  for $u \in I^{\circ}$ 
such that 
\begin{align} \label{eq:char-thm-ass-1}
I \supset f(Q)
\end{align} 
(or, more precisely, $I \supset f_0(Q)$ for some representative $f_0$ of $f$) and such that
\begin{align} \label{eq:char-thm-ass-2}
\Phi \circ f, \Phi \circ g \in L^1(Q,\R) \qquad \text{and} \qquad \phi\circ g \in L^2(Q,\R)
\end{align}
for all $g \in L^2_{\sigma}(Q,I)$. Then $f^*$ is a minimizer of the %generalized monotonic regression 
functional $J^{\Phi}_{f,w}|_{L^2_{\sigma}(Q,I)}$ with
\begin{align}
J^{\Phi}_{f,w}(g) := \int_Q \Delta_{\Phi}(f(x),g(x)) w(x) \d x 
\qquad (g \in L^2_{\sigma}(Q,I)),
\end{align}
where $\Delta_{\Phi}(u,v) := \Phi(u)-\Phi(v)-\phi(v)(u-v)$ for $u,v \in I$. If $\Phi$ is even strictly convex, then $f^*$ is the only minimizer of $J^{\Phi}_{f,w}|_{L^2_{\sigma}(Q,I)}$. 
\end{thm}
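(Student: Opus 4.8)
The plan is to reduce the generalized objective to the standard one by exploiting the Bregman-divergence structure of $\Delta_{\Phi}$. First I would record two preliminary observations. Since $\phi$ is squeezed between the one-sided derivatives of the convex function $\Phi$, it is monotonically increasing and finite on $\operatorname{int} I$; and since $I \supset f(Q)$, the interval-preservation part of Corollary~\ref{cor:Av_f,w(f^*=c)=c} gives $f^*(x) \in I$ for a.e.~$x$, so that $f^* \in L^2_{\sigma}(Q,I)$. Consequently assumption~\eqref{eq:char-thm-ass-2} applies with $g = f^*$ (in particular $\Phi\circ f^* \in L^1$ and $\phi\circ f^* \in L^2$), and $J^{\Phi}_{f,w}(f^*)$ is well-defined and finite.

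The heart of the argument is the elementary pointwise three-point identity for Bregman divergences,
\begin{align}
\Delta_{\Phi}(f,g) - \Delta_{\Phi}(f,f^*) = \Delta_{\Phi}(f^*,g) + (f-f^*)\big(\phi\circ f^* - \phi\circ g\big),
\end{align}
which I would verify by straightforward cancellation from the definition $\Delta_{\Phi}(u,v) = \Phi(u)-\Phi(v)-\phi(v)(u-v)$. Multiplying by $w$ and integrating term by term (each of the four resulting terms is integrable by~\eqref{eq:char-thm-ass-2}, the boundedness of $w$, and the Cauchy--Schwarz inequality) yields
\begin{align}
J^{\Phi}_{f,w}(g) - J^{\Phi}_{f,w}(f^*) = \int_Q \Delta_{\Phi}(f^*,g)\,w\,\d x + \scprd{f-f^*,\phi\circ f^* - \phi\circ g}_{2,w}.
\end{align}
The first summand is nonnegative because the supporting-line inequality for the convex $\Phi$ gives $\Delta_{\Phi}(u,v) \ge 0$ for all $u,v \in I$, and $w > 0$.

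It thus remains to show that the inner-product term is nonnegative, which I would split as $\scprd{f-f^*,\phi\circ f^*}_{2,w} - \scprd{f-f^*,\phi\circ g}_{2,w}$. For the second piece, $\phi\circ g$ is essentially $\sigma$-monotonic (composition of the increasing $\phi$ with the $\sigma$-monotonic $g$) and lies in $L^2$ by~\eqref{eq:char-thm-ass-2}, so $\phi\circ g \in L^2_{\sigma}(Q,\R)$ and Proposition~\ref{prop:geometr-char-of-p}(iii) gives $\scprd{f-f^*,\phi\circ g}_{2,w} \le 0$. The delicate piece is proving $\scprd{f-f^*,\phi\circ f^*}_{2,w} = 0$: Lemma~\ref{lm:phi(f^*)} delivers exactly this, but only for $\phi$ of bounded variation, whereas our $\phi$ is merely monotone and may be unbounded near the endpoints of $I$. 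This is where I expect the main obstacle, and I would resolve it via Lemma~\ref{lm:appr-by-bounded-mon-phi_n}: choose bounded monotonically increasing $\phi_n$ with $\phi_n\circ f^* \to \phi\circ f^*$ w.r.t.~$\norm{\cdot}_2$; each $\phi_n$ is of bounded variation, so $\scprd{f-f^*,\phi_n\circ f^*}_{2,w} = 0$ by Lemma~\ref{lm:phi(f^*)}, and passing to the limit (continuity of $\scprd{\cdot,\cdot}_{2,w}$, using $w \le \ol{c}$) yields the claim. Combining the three facts shows $J^{\Phi}_{f,w}(g) \ge J^{\Phi}_{f,w}(f^*)$, so $f^*$ is a minimizer.

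Finally, for uniqueness under strict convexity, suppose $g \in L^2_{\sigma}(Q,I)$ is also a minimizer. Then both nonnegative summands in the second display must vanish; in particular $\int_Q \Delta_{\Phi}(f^*,g)\,w\,\d x = 0$, and since $w \ge \ul{c} > 0$ this forces $\Delta_{\Phi}(f^*,g) = 0$ for a.e.~$x$. Strict convexity of $\Phi$ makes the supporting-line inequality strict off the diagonal, so $\Delta_{\Phi}(u,v) = 0$ only when $u = v$; hence $g = f^*$ almost everywhere, as desired.
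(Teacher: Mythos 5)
Your proposal is correct and takes essentially the same route as the paper's proof: the same three-point identity for $\Delta_{\Phi}$, interval preservation of $p_{\sigma}^w$ via Corollary~\ref{cor:Av_f,w(f^*=c)=c} to get $f^* \in L^2_{\sigma}(Q,I)$, the approximation by bounded increasing $\phi_n$ from Lemma~\ref{lm:appr-by-bounded-mon-phi_n} combined with Lemma~\ref{lm:phi(f^*)}, Proposition~\ref{prop:geometr-char-of-p} for the cross term, the subgradient inequality $\Delta_{\Phi}\ge 0$, and the same strict-convexity uniqueness argument. The only cosmetic difference is that the paper also routes $\phi\circ g$ through the approximants $\phi_n\circ g$ before invoking Proposition~\ref{prop:geometr-char-of-p} and passes to the limit, whereas you apply that proposition to $\phi\circ g$ directly (justified since $\phi\circ g$ is essentially $\sigma$-monotonic and in $L^2$ by hypothesis); both variants are sound.
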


\begin{proof}
With our preparations from the above lemmas at hand, 
we can proceed along the lines of proof of the discrete version of the theorem (Theorem~1.5.1 in~\cite{RoWrDy}). 
Indeed,
\begin{align} \label{eq:char-thm-1}
\Delta_{\Phi}(r,t) = \Delta_{\Phi}(r,s) + \Delta_{\Phi}(s,t) + (r-s)(\phi(s)-\phi(t)) \qquad (r,s,t \in I) 
\end{align}
by straightforward calculation using the definition of $\Delta_{\Phi}$ and, moreover, 
\begin{align} \label{eq:char-thm-2}
f^* = p_{\sigma}^w(f) \in L^2_{\sigma}(Q,I)
\end{align}
by the assumption~\eqref{eq:char-thm-ass-1} and Corollary~\ref{cor:Av_f,w(f^*=c)=c}. Consequently, 
\begin{align} \label{eq:char-thm-3}
&\int_Q \Delta_{\Phi}(f(x),g(x)) w(x)\d x - \int_Q \Delta_{\Phi}(f(x),f^*(x)) w(x)\d x - \int_Q \Delta_{\Phi}(f^*(x),g(x)) w(x)\d x \notag \\
&\qquad \qquad = \int_Q \big(f(x)-f^*(x)\big)\big(\phi(f^*(x))-\phi(g(x))\big) w(x) \d x \notag \\
&\qquad \qquad = \scprd{f-f^*,\phi\circ f^*}_{2,w} - \scprd{f-f^*,\phi\circ g}_{2,w}
\qquad (g \in L^2_{\sigma}(Q,I)),
\end{align}
where all integrals are well-defined and finite by~\eqref{eq:char-thm-2} and our assumption~\eqref{eq:char-thm-ass-2}. 
Since $\Phi$ is convex by assumption, the function $\phi: I \to \R\cup\{\pm\infty\}$ is monotonically increasing and finite on the interior of $I$ by Theorem~24.1 of~\cite{Ro} (or more precisely the first part of it, for which no closedness assumption has to be imposed on $\Phi$). %along with the remarks preceding Theorem~24.2 of~\cite{Ro}) 
And therefore, by Lemma~\ref{lm:appr-by-bounded-mon-phi_n} and~(\ref{eq:char-thm-ass-2}.b), we can find bounded monotonically increasing functions $\phi_n: \R \to \R$ such that
\begin{align} \label{eq:char-thm-4}
\phi_n \circ g \underset{\norm{\cdot}_2}{\longrightarrow} \phi\circ g \qquad (n\to\infty) \qquad (g \in L^2_{\sigma}(Q,I)).
\end{align} 
Since $\phi_n$ is increasing for every $n\in \N$, we also have
\begin{align} \label{eq:char-thm-5}
\phi_n \circ g \in L^2_{\sigma}(Q,\R) \qquad (g \in L^2_{\sigma}(Q,I)).
\end{align}
Applying now~\eqref{eq:char-thm-4} and~\eqref{eq:char-thm-2} to~\eqref{eq:char-thm-3} and then using Lemma~\ref{lm:phi(f^*)} as well as  Proposition~\ref{prop:geometr-char-of-p} with~\eqref{eq:char-thm-5}, we obtain
\begin{align} \label{eq:char-thm-6}
\int_Q \Delta_{\Phi}(f(x),g(x)) w(x)\d x 
&\ge  \int_Q \Delta_{\Phi}(f(x),f^*(x)) w(x)\d x \notag \\
&\qquad + \int_Q \Delta_{\Phi}(f^*(x),g(x)) w(x)\d x
\qquad (g \in L^2_{\sigma}(Q,I)).
\end{align} 
Since $\phi(v)$ is a subgradient of $\Phi$ at $v$ for every $v \in I$ (Theorem~23.2 of~\cite{Ro}), we have 
\begin{align} \label{eq:char-thm-7}
\Delta_{\Phi}(u,v) \ge 0 \qquad (u,v\in I)
\end{align}
by the subradient inequality for convex functions. In view of~\eqref{eq:char-thm-2} and~\eqref{eq:char-thm-6}, \eqref{eq:char-thm-7}, it is now clear that $f^*$ is a minimizer of $J^{\Phi}_{f,w}|_{L^2_{\sigma}(Q,I)}$, as desired. 
In the special case where $\Phi$ is even strictly convex, we have strict inequality in~\eqref{eq:char-thm-7} for all $u, v \in I$ with $u\ne v$ and hence the second integral on the right-hand side of~\eqref{eq:char-thm-6} is strictly positive for every $g \in L^2_{\sigma}(Q,I)$ with $g \ne f^*$. So, $J^{\Phi}_{f,w}|_{L^2_{\sigma}(Q,I)}$ can have no other minimizer apart from $f^*$, as desired. 
\end{proof}

In the special case where $\Phi: I \to \R$ is a continuously differentiable convex function on a compact interval $I \supset f(Q)$, the integrability assumptions~\eqref{eq:char-thm-ass-2} are, of course,  automatically satisfied. 
In the extreme special case 
\begin{align}
\Phi(u) := u^2 \qquad (u \in I) \qquad \text{with} \qquad I := \R,
\end{align}
one has $\Delta_{\Phi}(u,v) = |u-v|^2$ for $u,v \in I$ and therefore the problem of minimizing $J_{f,w}^{\Phi}|_{L^2_{\sigma}(Q,I)}$ in this extreme special case is nothing but %is the same as
the standard monotonic regression problem of minimizing $J_{f,w}|_{L^2_{\sigma}(Q,\R)}$. In this sense, the problem of minimizing $J_{f,w}^{\Phi}|_{L^2_{\sigma}(Q,I)}$ for general convex functions $\Phi$ on general intervals $I$ is a generalized monotonic regression problem and the above theorem says that for given $f$ and $w$, all generalized regression problems with strictly convex functions $\Phi$ have the same solution %, namely $p_{\sigma}^w(f)$, 
as the standard monotonic regression problem, namely $p_{\sigma}^w(f)$.

\section{Continuity of monotonic regression functions}%{Continuity of monotonic regressors}

In this section, we deal with the special case of continuous functions $f$ and $w$. We will show that in this case the monotonic regression function $p_{\sigma}^w(f)$ is continuous as well and has a closed-form representation in terms of averaging expressions, namely
\begin{align}
a_{\sigma, \mathrm{is}}^w(f)(x) &:= \inf_{L \in \mathcal{L}^{\mathrm{ro}}_{\sigma}(x)} \sup_{U \in \mathcal{U}^{\mathrm{ro}}_{\sigma}(x)} \Av_{f,w}(L\cap U) \label{eq:a_is}\\
a_{\sigma, \mathrm{si}}^w(f)(x) &:=  \sup_{U \in \mathcal{U}^{\mathrm{ro}}_{\sigma}(x)} \inf_{L \in \mathcal{L}^{\mathrm{ro}}_{\sigma}(x)} \Av_{f,w}(L\cap U), 
\label{eq:a_si}
\end{align}
where $\mathcal{L}^{\mathrm{ro}}_{\sigma}(x) := \{ L \in \mathcal{L}_{\sigma}: L \text{ is relatively open in } Q \text{ and } L \ni x \}$ and $\mathcal{U}^{\mathrm{ro}}_{\sigma}(x) := \{ U \in \mathcal{U}_{\sigma}: U \text{ is relatively open in } Q \text{ and } U \ni x \}$ for $x \in Q$. Clearly, $L\cap U$ is non-null for every $L \in \mathcal{L}_{\sigma}^{\mathrm{ro}}(x)$ and $U \in \mathcal{U}_{\sigma}^{\mathrm{ro}}(x)$ and therefore~\eqref{eq:a_is} and~\eqref{eq:a_si} define  well-defined bounded functions for all $f \in L^{\infty}(Q,\R)$ and $w \in L^{\infty}(Q,[c,\infty))$ with $c \in (0,\infty)$ (Lemma~\ref{lm:Av_f,w-bounded-and-equicont}). 
\smallskip 

We begin by establishing an averaging formula in the case of grid-constant functions $f$ and $w$. Compared to the representation from Corollary~\ref{cor:avg-G}, the essential difference is that the sets the infimum and supremum are taken over do not depend on the grid $G$ %on which $f$ and $w$ are constant. 
on the cells of which $f$ and $w$ are constant. 

\begin{lm} \label{lm:avg-ro}
Suppose $G$ is an equidistant grid on $Q$ and $f:Q \to \R$ and $w:Q \to (0,\infty)$ are $G$-constant functions. Suppose further $\sigma \in \{-1,1\}^d$. Then $a_{\sigma, \mathrm{is}}^w(f)$ and $a_{\sigma, \mathrm{si}}^w(f)$ are $\sigma$-monotonic representatives of $p_{\sigma}^w(f)$ that are constant on the relative interior of every cell of $G$.
\end{lm}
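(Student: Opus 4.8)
The plan is to compare the grid-independent averages $a_{\sigma,\mathrm{is}}^w(f)$ and $a_{\sigma,\mathrm{si}}^w(f)$ with the grid-based averages $a_{\sigma,\mathrm{is}}^{w,G}(f)$ and $a_{\sigma,\mathrm{si}}^{w,G}(f)$ from Corollary~\ref{cor:avg-G}. Since the latter are already known to be $G$-constant $\sigma$-monotonic representatives of $p_{\sigma}^w(f)$, and since two functions agreeing on $\bigcup_C \rint C$ agree almost everywhere (the cell boundaries form a $\lambda$-null set), it suffices to prove two things: (a) that $a_{\sigma,\mathrm{is}}^w(f)$ is genuinely $\sigma$-monotonic on all of $Q$, and (b) that $a_{\sigma,\mathrm{is}}^w(f)(x) = a_{\sigma,\mathrm{is}}^{w,G}(f)(x)$ for every $x$ in the relative interior of a cell of $G$. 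Statement (b) yields both the representative property and the constancy on relative interiors in one stroke; the case of $a_{\sigma,\mathrm{si}}^w(f)$ then follows symmetrically, or by applying the result to $-f$, $-\sigma$ via Lemma~\ref{lm:minus-sigma}, which interchanges $\mathcal{L}_{\sigma}$ with $\mathcal{U}_{\sigma}$ and ``is'' with ``si''.

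Part (a) is the easy half. If $x \le_{\sigma} y$, then every lower set containing $y$ contains $x$ and every upper set containing $x$ contains $y$, so $\mathcal{L}^{\mathrm{ro}}_{\sigma}(y) \subset \mathcal{L}^{\mathrm{ro}}_{\sigma}(x)$ and $\mathcal{U}^{\mathrm{ro}}_{\sigma}(x) \subset \mathcal{U}^{\mathrm{ro}}_{\sigma}(y)$. Shrinking the index set of an infimum and enlarging the index set of a supremum can each only raise the value, so the nested $\inf$--$\sup$ in~\eqref{eq:a_is} gives $a_{\sigma,\mathrm{is}}^w(f)(x) \le a_{\sigma,\mathrm{is}}^w(f)(y)$, which is exactly $\sigma$-monotonicity.

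For part (b), fix a cell $C$ with grid point $x_C$ and a point $x \in \rint C$. One inequality comes from a relative-interior correspondence: for any $\sigma$-lower union of cells $L_G \ni x$ and $\sigma$-upper union of cells $U_G \ni x$ (the sets appearing in $a_{\sigma,\mathrm{is}}^{w,G}(f)$), their relative interiors $\rint L_G$, $\rint U_G$ are again lower, resp.\ upper, sets, are relatively open, and still contain $x$ (because $\rint C \subset \rint L_G$); moreover they differ from $L_G$, $U_G$ only on the null cell boundaries, so no average $\Av_{f,w}$ changes. Thus the relatively open collections reproduce every grid average. The opposite inequality---that relatively open sets cannot improve on grid-cell unions---is the crux. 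Here I would use that, since $f$ and $w$ are $G$-constant, $\Av_{f,w}(L\cap U)$ depends on $L\cap U$ only through the normalized cell coverages $\lambda(L\cap U \cap C')/\operatorname{vol}_G \in [0,1]$ and is a linear-fractional function of them. Fixing $L = \rint L_G$ and varying a relatively open upper set $U$, the coverage vector $(\lambda(U\cap C')/\operatorname{vol}_G)_{C'}$ satisfies the monotonicity constraints of the order polytope of the poset of $G$-cells: if $x_{C''} \ge_{\sigma} x_{C'}$, the equidistance translation $z \mapsto z + (x_{C''}-x_{C'})$ of~\eqref{eq:cells-translates-of-each-other} maps $C'$ onto $C''$ and, by the upper-set property of $U$, carries $U\cap C'$ into $U\cap C''$, so $\lambda(U\cap C'') \ge \lambda(U\cap C')$. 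A linear-fractional function with positive denominator attains its supremum over a polytope at a vertex, and the vertices of this order polytope are precisely the $\{0,1\}$-indicators of $\sigma$-upper unions of cells; hence the supremum over relatively open $U$ is attained (in the limit) on grid-cell unions, and symmetrically for the outer infimum over $L$.

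The main obstacle is reconciling the constraint ``contains $x$'' across the two formulas. For a relatively open upper set, $U \ni x$ forces only that $U$ cover a possibly tiny, position-dependent neighborhood of $x$ inside $C$, whereas a grid-cell union in $\mathcal{U}_{\sigma}^G(x)$ must contain the whole cell $C$. I expect the resolution to come from the outer infimum over $L$ together with the vertex argument: the contribution of the single cell $C$ to $\Av_{f,w}(L\cap U)$ can be made arbitrarily small, its coverage tending to $0$ as $L$ and $U$ shrink toward $x$, so the value is pinned down by the surrounding fully covered cells and must equal the discrete regression value $p_{\sigma}^{w|_G}(f|_G)(x_C) = a_{\sigma,\mathrm{is}}^{w,G}(f)(x)$, independently of the position of $x$ within $\rint C$. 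Making this limiting argument precise---so that the ``$\ni x$'' constraint and the partial coverage of the boundary cells wash out and the nested $\inf$--$\sup$ collapses exactly onto the discrete min--max of Theorem~1.4.4 of~\cite{RoWrDy}---is the technical heart of the proof.
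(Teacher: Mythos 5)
Your reduction and your part (a) are sound: the index-set inclusions $\mathcal{L}_{\sigma}^{\mathrm{ro}}(y) \subset \mathcal{L}_{\sigma}^{\mathrm{ro}}(x)$ and $\mathcal{U}_{\sigma}^{\mathrm{ro}}(x) \subset \mathcal{U}_{\sigma}^{\mathrm{ro}}(y)$ for $x \le_{\sigma} y$ do give $\sigma$-monotonicity of $a_{\sigma,\mathrm{is}}^w(f)$, and relative interiors of cell-unions are indeed lower/upper, relatively open, contain $x$, and leave the averages unchanged, so every discrete average is reproduced by the relatively open collections. But there is a genuine gap exactly where you place the ``technical heart'': the claim that $\sup$ over relatively open $U \ni x$ collapses onto the discrete maximum over cell-unions $U_G \ni x$. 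Even granting the polytope facts (which themselves need care: the denominator of the linear-fractional function vanishes on part of the order polytope, so the maximum need not be attained at a vertex but only approached along degenerating sequences), the vertex argument produces a maximizing cell-union $U_G'$ that need \emph{not} contain the cell of $x$ --- the constraint $U \ni x$ only forces coverage $t_{C^G(x)} > 0$, which disappears when passing to the closed polytope. One cannot simply re-insert the missing cell: replacing $U_G'$ by $U_G' \cup U_C$ (with $U_C$ the upper set generated by $C^G(x)$) mixes in $\Av_{f,w}(L \cap (U_C\setminus U_G'))$, which may be smaller, so $\sup_{U_G \ni x}$ can a priori be strictly below $\max_{U_G'}$. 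Ruling this out requires structural information about the discrete regression solution --- it is not a routine limiting argument but essentially the content that has to be proved, and your proposal only conjectures it.

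For comparison, the paper avoids any min--max comparison with the discrete formula. It builds the representative $f_0^* = \sum_{x\in G} p_{\sigma}^{w|_G}(f|_G)(x)\,\chi_{C^G_{\sigma}(x)}$ from $\sigma$-adapted half-open cells (semiclosed on the $\sigma$-lower side; this is precisely where the hypothesis $\sigma \in \{-1,1\}^d$ enters), so that for $c := f_0^*(x)$ the sublevel set $\{f_0^* \le c\}$ is itself a relatively open lower set containing $x$, while $\{f_0^* \ge c\}$ is approximated from outside by relatively open upper sets $U_n$. It then applies Corollary~\ref{cor:Av_f,w(f^*=c)=c} --- the inequalities $\Av_{f,w}(\{f^* \le c\}\cap U) \le c$ and $\Av_{f,w}(L \cap \{f^* \ge c\}) \ge c$, which rest on Lemma~\ref{lm:phi(f^*)} and the approximation machinery of Sections~3 and~4 --- to pin $a_{\sigma,\mathrm{is}}^w(f)(x)$ between $c$ and $c$ for every $x \in Q$. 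If you want to complete your proof, the cleanest repair is to abandon the polytope route and use these level-set inequalities in place of the missing collapse argument.
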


\begin{proof}
We proceed in four steps, starting with some preliminary considerations. Set
\begin{align} \label{eq:avg-ro-1}
f_0^* := \sum_{x\in G} p_{\sigma}^{w|_G}(f|_G)(x) \chi_{C^G_{\sigma}(x)},
\end{align}
where $C^G_{\sigma}(x)$ for a given $x \in Q$ is the $\sigma$-lower semiclosed and $\sigma$-upper relatively semiopen cell of $G$ that contains $x$. What we mean by such a cell is a set $C$ of the form
\begin{align*}
C = I_1 \times \dotsb \times I_d
\end{align*} 
where the $I_i$ are partition intervals of the partitions $P_i = \{t_{ik}: k \in \{0, \dots, m_i\} \}$ defining $G$ of the following forms: in case $\sigma_i = 1$, 
\begin{align*}
I_i = [t_{i k-1},t_{i k}) \text{ for some } k \in \{1,\dots,m_i-1\} \text{ or } I_i = [t_{i m_i-1},t_{i m_i}]
\end{align*}
and, in case $\sigma_i = -1$, 
\begin{align*}
I_i = [t_{i 0},t_{i 1}] \text{ or } I_i = (t_{i k-1},t_{i k}] \text{ for some } k \in \{2,\dots,m_i\}.
\end{align*}
Since $\sigma \in \{-1,1\}^d$ by assumption, this is a complete case distinction. %covers all possible cases.  
\smallskip

As a first step, we show that $f_0^*$ is a $\sigma$-monotonic representative of $p_{\sigma}^w(f)$ that is constant on the relative interior of every $G$-cell. 
Indeed, by our definition of grids, every grid point $x \in G$ is the midpoint of its lower semiclosed and upper relatively semiopen cell $C^G(x)$ as well as of its $\sigma$-lower semiclosed and $\sigma$-upper relatively semiopen cell $C^G_{\sigma}(x)$. And therefore, $C^G_{\sigma}(x)$ and $C^G(x)$ can differ at most at the boundary, more precisely:
\begin{align} \label{eq:avg-ro-2}
\rint C^G_{\sigma}(x) = \rint C^G(x). 
\end{align}
So, by Theorem~\ref{thm:spec-case-grid-const-fct} and~\eqref{eq:avg-ro-2}, the function $f^*_0$ is a representative of $p_{\sigma}^w(f)$ and is constant on the relative interior of every $G$-cell. Also, $f^*_0$ is $\sigma$-monotonic by the $\sigma$-monotonicity of $p_{\sigma}^{w|_G}(f|_G)$ and the analog of Lemma~\ref{lm:cells}(i) for  $C^G_{\sigma}(x)$ instead of $C^G(x)$. 
\smallskip

As a second step, we show that $a_{\sigma,\mathrm{is}}^w(f)(x) = f_0^*(x)$ for every $x \in Q$.
So, let $x \in Q$ be fixed for the rest of this step and write $c := f_0^*(x)$. We then have for every $U \in \mathcal{U}_{\sigma}^{\mathrm{ro}}(x)$ and $L \in \mathcal{L}_{\sigma}^{\mathrm{ro}}(x)$ positive numbers $\eps_U, \eps_L >0$ such that
\begin{align*}
\{f_0^* \le c\} \cap U \supset C^G_{\sigma}(x) \cap B_{\eps_U}(x)
\qquad \text{and} \qquad
L \cap \{f_0^* \ge c\} \supset B_{\eps_L}(x) \cap C^G_{\sigma}(x) 
\end{align*}
and the sets on the right-hand sides of these inclusions clearly have a non-empty interior and thus are non-null sets. Consequently, 
\begin{align} 
\Av_{f,w}(\{f_0^* \le c\} \cap U) \le c \qquad (U \in \mathcal{U}_{\sigma}^{\mathrm{ro}}(x)) \label{eq:avg-ro-3} \\
\Av_{f,w}(L \cap \{f_0^* \ge c\}) \ge c \qquad (L \in \mathcal{L}_{\sigma}^{\mathrm{ro}}(x)) \label{eq:avg-ro-4}
\end{align}
by virtue of Corollary~\ref{cor:Av_f,w(f^*=c)=c}. Since $f_0^*$ is $\sigma$-monotonic by the first step, we have
\begin{align}
x \in \{f_0^* \le c\} \in \mathcal{L}_{\sigma} 
\qquad \text{and} \qquad 
x \in \{f_0^* \ge c\} \in \mathcal{U}_{\sigma} 
\end{align}
(Lemma~\ref{lm:char-isotonic-fcts}). 
Since, moreover, by~\eqref{eq:avg-ro-1} the sets $\{f_0^* \le c\}$ and $\{f_0^* \ge c\}$ are unions of $\sigma$-lower semiclosed $\sigma$-upper relatively semiopen grid cells, $\{f_0^* \le c\}$ is actually a relatively open $\sigma$-lower set in $Q$ and there exist relatively open $\sigma$-upper sets $U_n$ in $Q$ such that
\begin{align} \label{eq:avg-ro-5}
U_{n+1} \subset U_n \qquad (n \in \N) \qquad \text{and} \qquad \{f_0^* \ge c\} \subset \bigcap_{n=1}^{\infty} U_n \subset \ol{\{f_0^* \ge c\}}.
\end{align}
(Choose, for instance, $U_n := \{ y \in Q: |y_i-u_i|_{\infty} < 1/n  \text{ for some } u \in \{f_0^* \ge c\} \}$ for $n \in \N$.) So, we have
\begin{gather} 
\{f_0^* \le c\} \in \mathcal{L}_{\sigma}^{\mathrm{ro}}(x) \label{eq:avg-ro-6}\\
U_n \in \mathcal{U}_{\sigma}^{\mathrm{ro}}(x)
\qquad \text{and} \qquad
\Av_{f,w}(L \cap \{f_0^* \ge c\}) = \lim_{n\to\infty} \Av_{f,w}(L \cap U_n) \label{eq:avg-ro-7}
\end{gather}
for every $L \in \mathcal{L}_{\sigma}^{\mathrm{ro}}(x)$. And therefore
\begin{align} \label{eq:avg-ro-8}
a_{\sigma,\mathrm{is}}^w(f)(x) \le \sup_{U\in \mathcal{U}_{\sigma}^{\mathrm{ro}}(x)} \Av_{f,w}(\{f_0^* \le c\} \cap U) \le c
\end{align}
by virtue of~\eqref{eq:avg-ro-3} and~\eqref{eq:avg-ro-6}, as well as
\begin{align} \label{eq:avg-ro-9}
c \le \inf_{L \in \mathcal{L}_{\sigma}^{\mathrm{ro}}(x)} \Av_{f,w}(L \cap \{f_0^* \ge c\}) = \inf_{L \in \mathcal{L}_{\sigma}^{\mathrm{ro}}(x)} \lim_{n\to\infty} \Av_{f,w}(L \cap U_n) \le a_{\sigma,\mathrm{is}}^w(f)(x)
\end{align}
by virtue of~\eqref{eq:avg-ro-4} and~\eqref{eq:avg-ro-7}. Combining now~\eqref{eq:avg-ro-8} and~\eqref{eq:avg-ro-9}, we arrive at the desired conclusion $a_{\sigma,\mathrm{is}}^w(f)(x) = c = f_0^*(x)$ of the second step. 
\smallskip

As a third step, we prove the $\inf$-$\sup$ part of the lemma, which concerns $a_{\sigma,\mathrm{is}}^w(f)$. %of the lemma's assertion concerning $a_{\sigma,\mathrm{is}}^w(f)$. 
Indeed, by the first and second step, we see that $a_{\sigma,\mathrm{is}}^w(f) = f_0^*$ is a $\sigma$-monotonic representative of $p_{\sigma}^w(f)$ that is constant on the relative interior of every $G$-cell, which is the desired conclusion of the third step. %as desired.
\smallskip

As a fourth step, we prove the $\sup$-$\inf$ part of the lemma, which concerns $a_{\sigma,\mathrm{si}}^w(f)$. %of the lemma's assertion concerning $a_{\sigma,\mathrm{si}}^w(f)$. 
Indeed, as $\mathcal{U}_{\sigma}^{\mathrm{ro}}(x) = \mathcal{L}_{-\sigma}^{\mathrm{ro}}(x)$ and $\mathcal{L}_{\sigma}^{\mathrm{ro}}(x) = \mathcal{U}_{-\sigma}^{\mathrm{ro}}(x)$ by Lemma~\ref{lm:minus-sigma}(ii), we see that
\begin{align} \label{eq:avg-ro-10}
a_{\sigma,\mathrm{si}}^w(f)(x) 
&= -\inf_{U \in \mathcal{U}_{\sigma}^{\mathrm{ro}}(x)} \Big( -\inf_{L \in \mathcal{L}_{\sigma}^{\mathrm{ro}}(x)} \Av_{f,w}(L\cap U) \Big) \notag \\
&= - \inf_{U \in \mathcal{U}_{\sigma}^{\mathrm{ro}}(x)} \sup_{L \in \mathcal{L}_{\sigma}^{\mathrm{ro}}(x)} \big( -\Av_{f,w}(L\cap U) \big) \notag \\
&= - \inf_{U \in \mathcal{L}_{-\sigma}^{\mathrm{ro}}(x)} \sup_{L \in \mathcal{U}_{-\sigma}^{\mathrm{ro}}(x)} \Av_{-f,w}(L\cap U) 
= -a_{-\sigma,\mathrm{is}}^w(-f)(x) \qquad (x \in Q).
\end{align}
Applying the $\inf$-$\sup$ part of the lemma with the $G$-constant functions $-f, w$ and with $-\sigma \in \{-1,1\}^d$, we further see that $-a_{-\sigma,\mathrm{is}}^w(-f)$ is $\sigma$-monotonic representative of
\begin{align}
-p_{-\sigma}^w(-f) = p_{\sigma}^w(f)
\end{align} 
%$-p_{-\sigma}^w(-f) = p_{\sigma}^w(f)$ 
(Lemma~\ref{lm:minus-sigma}(i)) that is constant on the relative interior of every $G$-cell. In view of~\eqref{eq:avg-ro-10}, this is the desired conclusion of the fourth step.  %Combining this with~\eqref{eq:avg-ro-10}, we arrive at the desired conclusion of the fourth step.  
\end{proof}

\begin{lm} \label{lm:grids-with-x_0-in-interior-of-its-cell}
Suppose $x_0 \in Q$. Then there exist equidistant grids $G_n$ on $Q$ such that their cells' maximal edge length $\operatorname{len}_{G_n}$ tends to zero as $n \to \infty$ %$\operatorname{len}_{G_n} \longrightarrow 0$ as $n \to \infty$ 
and such that $x_0$ lies in the relative interior of its $G_n$-cell %$C^{G_n}(x_0)$ 
for every $n \in \N$. 
%In short, 
%\begin{align}
%\operatorname{len}_{G_n} \longrightarrow 0 \qquad (n\to\infty)
%\qquad \text{and} \qquad
%x_0 \in \rint C^{G_n}(x_0) \qquad (n\in\N).
%\end{align}
%
\end{lm}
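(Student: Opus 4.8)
The plan is to build each grid $G_n$ one coordinate at a time, using that an equidistant grid is completely specified by freely choosing, for each $i$, the number $m_{n,i}$ of equal subintervals of $[a_i,b_i]$, and that its cells are products $I_1 \times \dotsb \times I_d$ of one-dimensional partition intervals. Since for finite products the interior factorizes and the relative interior in $Q$ is just the interior in the subspace topology, one has $\rint(I_1\times\dotsb\times I_d) = \operatorname{int}_{[a_1,b_1]} I_1 \times \dotsb \times \operatorname{int}_{[a_d,b_d]} I_d$. Hence it suffices to arrange, independently in each coordinate $i$, that $(x_0)_i$ lies in the relative interior (taken in $[a_i,b_i]$) of the partition interval of $P_{n,i}$ containing it, while keeping $m_{n,i}\to\infty$ so that $\operatorname{len}_{G_n}\to 0$.

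To carry out the coordinate-wise analysis I would set $r_i := ((x_0)_i - a_i)/(b_i-a_i) \in [0,1]$. For a boundary coordinate $r_i \in \{0,1\}$ any $m_{n,i}$ works: the partition interval containing $(x_0)_i$ is $[a_i,t_{i1})$ resp.\ $[t_{i\,m_{n,i}-1},b_i]$, which is relatively open (in $[a_i,b_i]$) toward the endpoint $(x_0)_i = a_i$ resp.\ $b_i$, so $(x_0)_i$ indeed lies in its relative interior. For an interior coordinate $0 < r_i < 1$ I need only prevent $(x_0)_i$ from being a partition point $a_i + k(b_i-a_i)/m_{n,i}$, that is, prevent $m_{n,i}\, r_i \in \Z$; once $(x_0)_i$ sits strictly between two consecutive partition points it automatically lies in the relative interior of the cell interval containing it.

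The crux is therefore the choice of $m_{n,i}$ avoiding $m_{n,i}\, r_i \in \Z$. If $r_i$ is irrational this holds for every $m_{n,i}$; if $r_i = p_i/q_i$ in lowest terms with $q_i \ge 2$, then $m\, r_i \in \Z$ exactly when $q_i \mid m$, so it suffices to take $m_{n,i}$ not divisible by $q_i$. A clean uniform realization is to fix a prime $p_n$ with $p_n > \max_i q_i$ (using the convention $q_i := 1$ when $r_i$ is irrational or $r_i \in \{0,1\}$) and set $m_{n,i} := p_n$ for all $i$: since $p_n$ is prime and exceeds each $q_i \ge 2$, no $q_i$ divides $p_n$. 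Letting $p_n$ run to infinity through the primes yields $\operatorname{len}_{G_n} = \max_i (b_i-a_i)/p_n \to 0$, and by the coordinate-wise verification together with the factorization of $\rint$ we get $x_0 \in \rint C^{G_n}(x_0)$ for every $n$.

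I do not expect a serious obstacle here; the only point requiring genuine care is the partition-point avoidance, which is precisely why the number-theoretic restriction on $m_{n,i}$ (non-divisibility by the denominator $q_i$) cannot be dropped: whenever $r_i$ is rational, infinitely many $m_{n,i}$ would place $(x_0)_i$ exactly on a grid line and hence only on the relatively closed boundary of its cell. The boundary-coordinate cases additionally rely on the relative interior being understood within $Q$ rather than within $\R^d$, which is what keeps corner and face points of $Q$ inside the relative interiors of their cells.
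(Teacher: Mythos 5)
Your proof is correct and follows essentially the same route as the paper's: both reduce to avoiding, coordinate-wise, that $x_0$ lands on an interior partition point, and both achieve this by choosing prime partition counts exceeding the denominators of the rational ratios $((x_0)_i-a_i)/(b_i-a_i)$, with irrational ratios handled trivially. The only cosmetic differences are that you use a single prime for all coordinates (the paper allows coordinate-dependent counts, requiring primality only in the rational coordinates) and that you make the factorization of the relative interior and the boundary-coordinate cases explicit, which the paper handles implicitly by restricting to hyperplanes through the interior of $Q$.
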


\begin{proof}
We have to show that for every $\eps > 0$ there exists an equidistant grid $G$ on $Q$ such that
\begin{align} \label{eq:grids-with-x_0-in-interior-of-its-cell-1}
\operatorname{len}_G \le \eps 
\qquad \text{and} \qquad
x_0 \in \rint C^G(x_0).
\end{align}
So, let $\eps > 0$ and let $P := P_1 \times \dotsb \times P_d$ with partitions
\begin{align}
P_i := \big\{ a_i + \frac{k}{p_i}(b_i-a_i): i \in \{0,\dots,p_i\} \big\},
\end{align}
%of $[a_i,b_i]$, 
where the numbers $p_i$ are chosen as follows: in case $(x_{0i}-a_i)/(b_i-a_i) \notin \Q$,
\begin{align} \label{eq:grids-with-x_0-in-interior-of-its-cell-2}
p_i \in \N \qquad \text{and} \qquad p_i \ge 1/\eps
\end{align}
and in case $(x_{0i}-a_i)/(b_i-a_i) \in \Q$,
\begin{align} \label{eq:grids-with-x_0-in-interior-of-its-cell-3}
p_i \in \mathbb{P} \qquad \text{and} \qquad p_i \ge 1/\eps \qquad \text{and} \qquad p_i \ge n_i + 1
\end{align} 
with $\mathbb{P}$ being the set of all primes and with $n_i$ being the denominator of the irreducible fractional representation of $(x_{0i}-a_i)/(b_i-a_i) \in \Q$. Also, let $G$ be the grid on $Q$ that is determined by $P = P_1 \times \dotsb \times P_d$. 
Since the partitions $P_i$ are equidistant with $\operatorname{len}_{P_i} = 1/p_i \le \eps$ by virtue of~(\ref{eq:grids-with-x_0-in-interior-of-its-cell-2}.b) and (\ref{eq:grids-with-x_0-in-interior-of-its-cell-3}.b), our grid $G$ is equidistant and satisfies~(\ref{eq:grids-with-x_0-in-interior-of-its-cell-1}.a). 
%
%It remains to prove~(\ref{eq:grids-with-x_0-in-interior-of-its-cell-1}.b).
In order to prove~(\ref{eq:grids-with-x_0-in-interior-of-its-cell-1}.b), we now show that $x_0$ lies on none of the grid hyperplanes $H_{ik}$ that run through the interior of $Q$, that is,
\begin{align} \label{eq:grids-with-x_0-in-interior-of-its-cell-4}
x_0 \notin H_{ik} := \big\{x \in \R^d: x_i = a_i + \frac{k}{p_i}(b_i-a_i) \big\}
\end{align}
for every $k \in \{1,\dots,p_i-1\}$ and $i \in \{1,\dots,d\}$. Assume, on the contrary, that $x_{0i} = a_i + (k/p_i) (b_i-a_i)$ for some $k \in \{1,\dots,p_i-1\}$ and some  $i \in \{1,\dots,d\}$. Then 
\begin{align} \label{eq:grids-with-x_0-in-interior-of-its-cell-5}
\Q \cap (0,1) \ni \frac{k}{p_i} = \frac{x_{0i}-a_i}{b_i-a_i} = \frac{m_i}{n_i} 
\end{align}
for some coprime positive integers $m_i, n_i \in \N$. Consequently, 
\begin{align} \label{eq:grids-with-x_0-in-interior-of-its-cell-6}
p_i \in \mathbb{P} \qquad \text{and} \qquad p_i \ge n_i+1
\end{align}
by our choice of the $p_j$ and, moreover, $n_i$ divides $k n_i = m_i p_i$. As $m_i$ and $n_i$ are coprime, $n_i$ must divide $p_i$ and therefore $n_i = 1$ by virtue of~\eqref{eq:grids-with-x_0-in-interior-of-its-cell-6}. In view of~\eqref{eq:grids-with-x_0-in-interior-of-its-cell-5}, we thus obtain the desired contradiction
\begin{align}
1 > \frac{k}{p_i} = \frac{m_i}{n_i} = m_i \ge 1.
\end{align}
So, \eqref{eq:grids-with-x_0-in-interior-of-its-cell-4} is proven and this in turn implies~(\ref{eq:grids-with-x_0-in-interior-of-its-cell-1}.b).
\end{proof}

With the above lemmas at hand, we can now establish the main result of the section. It is a generalization of a result from~\cite{GrJo10} (Theorem~1 and Lemma~1), where the case of univariate functions $f$ and $w$ is considered. We proceed in a completely different way than~\cite{GrJo10} because the strategy of proof from~\cite{GrJo10} does not carry over to multivariate situations. %cannot %, to the best of our knowledge, 
%be (easily) carried over to multivariate situations. 
%See the remarks below for an alternative proof of the continuity result in the univariate special case from~\cite{GrJo10}. 
After the proof below, we also sketch a simple alternative proof of the continuity result in the univariate special case from~\cite{GrJo10}. 

\begin{thm} \label{thm:continuity-thm}
Suppose $f \in C(Q,\R)$ and $w \in C(Q,(0,\infty))$. Then $p_{\sigma}^w(f)$ has a unique continuous and $\sigma$-monotonic representative $f_0^*$ and %it is given by
\begin{align} \label{eq:cont-thm}
a_{\hat{\sigma},\mathrm{is}}^{w(\cdot\,\&\dot{x})}\big(f(\cdot\,\&\dot{x})\big)(\hat{x}) = f_0^*(x) = a_{\hat{\sigma},\mathrm{si}}^{w(\cdot\,\&\dot{x})}\big(f(\cdot\,\&\dot{x})\big)(\hat{x})
\qquad (x = \hat{x} \& \dot{x} \in Q).
\end{align}
\end{thm}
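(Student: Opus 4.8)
The plan is to first reduce to the case $\sigma\in\{-1,1\}^d$ (no vanishing components), where the real work lies, and then assemble the general case by slicing. By Lemma~\ref{lm:sigma-hat}, a function on $Q$ is $\sigma$-monotonic iff each slice $g(\cdot\,\&\dot{x})$ is $\hat\sigma$-monotonic, while by Fubini the objective splits as $J_{f,w}(g)=\int_{\dot Q}\big(\int_{\hat Q}|f(\hat{x}\&\dot{x})-g(\hat{x}\&\dot{x})|^2 w(\hat{x}\&\dot{x})\d\hat x\big)\d\dot x$ with no coupling across the slices $\dot x$. Hence the minimizer ought to be obtained slicewise, and I would \emph{define} $f_0^*(x):=a_{\hat\sigma,\mathrm{is}}^{w(\cdot\,\&\dot x)}\big(f(\cdot\,\&\dot x)\big)(\hat x)$, then show (a) $f_0^*$ is continuous and $\sigma$-monotonic, and (b) $f_0^*=p_\sigma^w(f)$ a.e. Granting the theorem for signatures in $\{-1,1\}^d$ applied to the continuous slice data $f(\cdot\,\&\dot x),w(\cdot\,\&\dot x)$ on $\hat Q$, each slice $f_0^*(\cdot\,\&\dot x)$ is the continuous $\hat\sigma$-monotonic representative of $p_{\hat\sigma}^{w(\cdot\,\&\dot x)}(f(\cdot\,\&\dot x))$, and $\sigma$-monotonicity of $f_0^*$ is then immediate from Lemma~\ref{lm:sigma-hat}.

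For $\sigma\in\{-1,1\}^d$ I would first identify $a_{\sigma,\mathrm{is}}^w(f)$ and $a_{\sigma,\mathrm{si}}^w(f)$ as $\sigma$-monotonic representatives of $p_\sigma^w(f)$. Take any equidistant grids $G_n$ with $\operatorname{len}_{G_n}\to 0$ and the $G_n$-constant samplings $f_n,w_n$ of $f,w$; uniform continuity gives $f_n\to f$, $w_n\to w$ in $\norm{\cdot}_\infty$ (cf.\ the proof of Lemma~\ref{lm:S^infty}). By Lemma~\ref{lm:avg-ro}, $a_{\sigma,\mathrm{is}}^{w_n}(f_n)$ is a $\sigma$-monotonic representative of $p_\sigma^{w_n}(f_n)$. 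Since the families $\mathcal L_\sigma^{\mathrm{ro}}(x),\mathcal U_\sigma^{\mathrm{ro}}(x)$ over which the infima and suprema run depend only on $x$ and not on $f,w$, Lemma~\ref{lm:difference-of-inf-sup-expressions} yields $\sup_{x}\big|a_{\sigma,\mathrm{is}}^{w}(f)(x)-a_{\sigma,\mathrm{is}}^{w_n}(f_n)(x)\big|\le\sup_{E\in\mathcal{Q}_Q^{>}}\big|\Av_{f,w}(E)-\Av_{f_n,w_n}(E)\big|=:\delta_n$, which tends to $0$ by Lemma~\ref{lm:Av_f,w-bounded-and-equicont}. Thus $a_{\sigma,\mathrm{is}}^{w_n}(f_n)\to a_{\sigma,\mathrm{is}}^{w}(f)$ uniformly, so $a_{\sigma,\mathrm{is}}^{w}(f)$ is $\sigma$-monotonic (a uniform limit of such), and since $p_\sigma^{w_n}(f_n)\to p_\sigma^w(f)$ in $\norm{\cdot}_2$ by Lemma~\ref{lm:p^w(f)-contin-in-w,f}, it agrees a.e.\ with $p_\sigma^w(f)$. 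The same argument applies to $a_{\sigma,\mathrm{si}}^w(f)$.

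The \emph{main obstacle} is continuity, which (as the paper stresses) cannot conveniently be read off the averaging formula. Here I would exploit Lemma~\ref{lm:grids-with-x_0-in-interior-of-its-cell}: fixing $x_0$, choose equidistant grids $G_n$ with $\operatorname{len}_{G_n}\to 0$ \emph{and} $x_0\in\rint C^{G_n}(x_0)$, and let $f_n,w_n$ be the corresponding $G_n$-constant samplings. By Lemma~\ref{lm:avg-ro}, $a_{\sigma,\mathrm{is}}^{w_n}(f_n)$ is constant on $V_n:=\rint C^{G_n}(x_0)$, say equal to $c_n$, and $V_n$ is a relative neighborhood of $x_0$. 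With $\delta_n$ as above (again $\to 0$), the uniform bound gives, for every $y\in V_n$, $\big|a_{\sigma,\mathrm{is}}^{w}(f)(y)-a_{\sigma,\mathrm{is}}^w(f)(x_0)\big|\le\big|a_{\sigma,\mathrm{is}}^w(f)(y)-c_n\big|+\big|c_n-a_{\sigma,\mathrm{is}}^w(f)(x_0)\big|\le 2\delta_n$. As $V_n$ is a neighborhood of $x_0$ and $\delta_n\to0$, this is exactly continuity of $a_{\sigma,\mathrm{is}}^w(f)$ at $x_0$; the same works for $a_{\sigma,\mathrm{si}}^w(f)$ (or one deduces the $\mathrm{si}$ case from the $\mathrm{is}$ case via the passage $\sigma\mapsto-\sigma$, $f\mapsto-f$ of Lemma~\ref{lm:minus-sigma}, as in the proof of Lemma~\ref{lm:avg-ro}). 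Two continuous functions agreeing a.e.\ on $Q$ agree everywhere, so $a_{\sigma,\mathrm{is}}^w(f)=a_{\sigma,\mathrm{si}}^w(f)=:f_0^*$ everywhere, which settles the case $\sigma\in\{-1,1\}^d$.

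Returning to general $\sigma$, it remains to prove (a) and (b) for $f_0^*(x):=a_{\hat\sigma,\mathrm{is}}^{w(\cdot\,\&\dot x)}(f(\cdot\,\&\dot x))(\hat x)$. For \emph{joint} continuity at $x_0=\hat x_0\&\dot x_0$ I would bound $|f_0^*(\hat x\&\dot x)-f_0^*(\hat x_0\&\dot x_0)|$ by a term governed by continuity of the fixed slice $\dot x_0$ (the $\{-1,1\}^d$ case) plus the term $\big|a_{\hat\sigma,\mathrm{is}}^{w(\cdot\,\&\dot x)}(f(\cdot\,\&\dot x))(\hat x)-a_{\hat\sigma,\mathrm{is}}^{w(\cdot\,\&\dot x_0)}(f(\cdot\,\&\dot x_0))(\hat x)\big|$, which by Lemma~\ref{lm:difference-of-inf-sup-expressions} (same $\hat x$, same families) is at most $\sup_E\big|\Av_{f(\cdot\,\&\dot x),w(\cdot\,\&\dot x)}(E)-\Av_{f(\cdot\,\&\dot x_0),w(\cdot\,\&\dot x_0)}(E)\big|\to 0$ as $\dot x\to\dot x_0$ by Lemma~\ref{lm:Av_f,w-bounded-and-equicont}, since $f(\cdot\,\&\dot x)\to f(\cdot\,\&\dot x_0)$ and $w(\cdot\,\&\dot x)\to w(\cdot\,\&\dot x_0)$ in $\norm{\cdot}_\infty$ by uniform continuity. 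For (b), $f_0^*$ is $\sigma$-monotonic (Lemma~\ref{lm:sigma-hat}) and continuous, hence lies in $L^2_\sigma(Q,\R)$; by Fubini and slicewise optimality $J_{f,w}(f_0^*)=\int_{\dot Q}\min_h\big(\int_{\hat Q}\cdots\big)\d\dot x\le J_{f,w}(p_\sigma^w(f))$, so $f_0^*$ is a minimizer and $f_0^*=p_\sigma^w(f)$ a.e.\ by uniqueness (Theorem~\ref{thm:ex-and-uniqueness-of-p}). Uniqueness of the continuous representative follows because continuous functions equal a.e.\ on $Q$ are equal everywhere. The only delicate point is the Fubini/measurability bookkeeping of the slice decomposition, which is harmless precisely because $f_0^*$ has already been shown to be continuous.
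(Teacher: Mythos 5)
Your proposal is correct and, for the bulk of the argument, follows the paper's own proof essentially step for step: the same two-stage structure (first $\sigma\in\{-1,1\}^d$, then general $\sigma$ by slicing via Lemma~\ref{lm:sigma-hat}), the same identification of $a_{\sigma,\mathrm{is}}^w(f)$ and $a_{\sigma,\mathrm{si}}^w(f)$ as $\sigma$-monotonic representatives through uniform approximation by grid-constant data (Lemmas~\ref{lm:avg-ro}, \ref{lm:difference-of-inf-sup-expressions}, \ref{lm:Av_f,w-bounded-and-equicont}, \ref{lm:p^w(f)-contin-in-w,f}), the same continuity trick at a fixed $x_0$ using the special grids of Lemma~\ref{lm:grids-with-x_0-in-interior} and constancy on $\rint C^{G_n}(x_0)$, and the same slice-comparison estimate for joint continuity in the general case.

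The one place you genuinely deviate is the final identification $f_0^*=p_{\sigma}^w(f)$ for general $\sigma$. The paper verifies the variational inequality $\scprd{f-f_0^*,f_0^*-g}_{2,w}\ge 0$ first only for $g\in S_{\sigma}(Q,\R)$ -- where every slice $g(\cdot\,\&\dot{x})$ is genuinely (not merely essentially) $\hat{\sigma}$-monotonic, so Fubini plus Proposition~\ref{prop:geometr-char-of-p} applies slicewise with no null-set issues -- and then extends to all of $L^2_{\sigma}(Q,\R)$ by the density statement of Theorem~\ref{thm:approx-thm-2-norm}. You instead compare objective values directly, $J_{f,w}(f_0^*)=\int_{\dot{Q}}\min(\cdots)\d\dot{x}\le J_{f,w}(g)$ for all $g\in L^2_{\sigma}(Q,\R)$, and conclude by uniqueness of the minimizer (Theorem~\ref{thm:ex-and-uniqueness-of-p}). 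This works, but the inequality requires that for almost every $\dot{x}$ the slice $g(\cdot\,\&\dot{x})$ of a representative of the competitor $g$ lies in $L^2_{\hat{\sigma}}(\hat{Q},\R)$; since such a representative is only \emph{essentially} $\sigma$-monotonic (monotonic off a null set $N$), this needs a small Fubini argument: $N_{\dot{x}}$ is null for a.e.~$\dot{x}$, and off $N_{\dot{x}}$ the slice inherits $\hat{\sigma}$-monotonicity because $\le_{\sigma}$ restricted to a slice is exactly $\le_{\hat{\sigma}}$. So the ``delicate bookkeeping'' you flag is real, but it concerns the competitor $g$, not $f_0^*$ -- the continuity of $f_0^*$ does not dispose of it, and it is precisely the issue the paper's density detour is designed to avoid. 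Your route buys a shorter ending (no appeal to the density part of Theorem~\ref{thm:approx-thm-2-norm}) at the cost of this extra, easy but necessary, measure-theoretic step; both arguments are sound.
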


\begin{proof}
(i) We first confine ourselves to the special case $\sigma \in \{-1,1\}^d$ and prove in two steps that 
\begin{align} \label{eq:cont-thm-1}
f_{\#} := a_{\sigma, \#}^w(f) \qquad (\# \in \{\mathrm{is},\mathrm{si}\})
\end{align}
is a $\sigma$-monotonic representative of $p_{\sigma}^w(f)$ that is  continuous at every $x_0 \in Q$.
%We first prove the assertion in the special case $\sigma \in \{-1,1\}^d$ in three steps. We begin by showing that
%\begin{align} \label{eq:cont-thm-1}
%f_{\#} := a_{\sigma, \#}^w(f) \qquad (\# \in \{\mathrm{is},\mathrm{si}\})
%\end{align}
%is a $\sigma$-monotonic representative of $p_{\sigma}^w(f)$ being continuous at every $x_0 \in Q$ and that $f_{\mathrm{is}} = f_{\mathrm{si}}$.
%
So, let $x_0 \in Q$ be fixed for the rest of part~(i) of the proof and let $G_n$ be equidistant grids on $Q$ such that
\begin{align} \label{eq:cont-thm-2}
\operatorname{len}_{G_n} \longrightarrow 0 \qquad (n\to\infty)
\qquad \text{and} \qquad
x_0 \in \rint C^{G_n}(x_0) \qquad (n\in\N)
\end{align} 
(Lemma~\ref{lm:grids-with-x_0-in-interior-of-its-cell}). Also, define the $G_n$-constant functions
\begin{align} \label{eq:cont-thm-3}
f_n := \sum_{x\in G_n} f(x) \chi_{C^{G_n}(x)} 
\qquad \text{and} \qquad
w_n := \sum_{x\in G_n} w(x) \chi_{C^{G_n}(x)}
\end{align}
as well as the functions $f_{n\#} := a_{\sigma, \#}^{w_n}(f_n)$ for $\# \in \{\mathrm{is},\mathrm{si}\}$ and $n \in \N$. (It should be noticed that while the grids $G_n$ -- and hence also the functions $f_n, w_n, f_{n\#}$ -- depend on $x_0$, the function $f_{\#}$ does not.)
\smallskip

As a first step, we show that $f_{\#}$ is a $\sigma$-monotonic representative of $p_{\sigma}^w(f)$ for $\# \in \{\mathrm{is},\mathrm{si}\}$. 
Since, by assumption, $f$ and $w$ are continuous and hence uniformly continuous on $Q$, it follows by~(\ref{eq:cont-thm-2}.a) and~\eqref{eq:cont-thm-3} that
\begin{gather} 
f_n \underset{\norm{\cdot}_{\infty}}{\longrightarrow} f \qquad (n\to\infty) \qquad \text{and} \qquad w_n \underset{\norm{\cdot}_{\infty}}{\longrightarrow} w \qquad (n\to\infty) \label{eq:cont-thm-4} \\
\ul{c} \le w_n(x) \le \ol{c} \qquad (x \in Q \text{ and } n \in \N) \label{eq:cont-thm-5} 
\end{gather}
for some positive constants $\ul{c},\ol{c} \in (0,\infty)$. So, by Lemma~\ref{lm:Av_f,w-bounded-and-equicont},
\begin{align} \label{eq:cont-thm-6} 
\sup_{E \in \mathcal{Q}_Q^{>}} \big| \Av_{f_n,w_n}(E) - \Av_{f,w}(E) \big| \longrightarrow 0 \qquad (n\to\infty)
\end{align} 
and $\mathcal{Q}_Q^{>} \ni E \mapsto \Av_{f_n,w_n}(E), \Av_{f,w}(E)$ are bounded functions. So, by Lemma~\ref{lm:difference-of-inf-sup-expressions},
\begin{align} \label{eq:cont-thm-7} 
\sup_{x\in Q} \big| f_{n\#}(x)-f_{\#}(x) \big| 
&\le \sup_{x\in Q} \sup_{(L,U) \in \mathcal{L}_{\sigma}^{\mathrm{ro}}(x) \times \mathcal{U}_{\sigma}^{\mathrm{ro}}(x)} \Big| \Av_{f_n,w_n}(L\cap U) - \Av_{f,w}(L\cap U) \Big| \notag \\
&\le \sup_{E \in \mathcal{Q}_Q^{>}} \big| \Av_{f_n,w_n}(E) - \Av_{f,w}(E) \big| \qquad (n \in \N).
\end{align}
%for all $n \in \N$. 
Combining~\eqref{eq:cont-thm-6} and~\eqref{eq:cont-thm-7}, we obtain the uniform convergence
\begin{align} \label{eq:cont-thm-8} 
f_{n \#} \underset{\norm{\cdot}_{\sup}}{\longrightarrow} f_{\#} \qquad (n\to\infty).
\end{align}
Since $f_{n\#} = a_{\sigma, \#}^{w_n}(f_n)$ is a representative of $p_{\sigma}^{w_n}(f_n)$ by Lemma~\ref{lm:avg-ro}, we also have that
\begin{align} \label{eq:cont-thm-9} 
f_{n\#} = p_{\sigma}^{w_n}(f_n) \underset{\norm{\cdot}_2}{\longrightarrow} p_{\sigma}^w(f) \qquad (n\to\infty)
\end{align}
by virtue of~\eqref{eq:cont-thm-4} and~\eqref{eq:cont-thm-5} and Lemma~\ref{lm:p^w(f)-contin-in-w,f}. In view of~\eqref{eq:cont-thm-8} and~\eqref{eq:cont-thm-9}, $f_{\#}$ is a representative of $p_{\sigma}^w(f)$. Since, moreover, the functions $f_{n\#}$ are $\sigma$-monotonic by Lemma~\ref{lm:avg-ro}, the function $f_{\#}$, in view of~\eqref{eq:cont-thm-8}, is $\sigma$-monotonic as well. %by~\eqref{eq:cont-thm-8}. 
\smallskip

As a second step, we show that $f_{\#}$ is continuous at $x_0$ for $\# \in \{ \mathrm{is},\mathrm{si}\}$. 
So, let $\eps > 0$. In view of~\eqref{eq:cont-thm-8}, there is an $n_0 \in \N$ such that
\begin{align} \label{eq:cont-thm-10} 
\norm{f_{n_0 \#}-f_{\#}}_{\sup} \le \eps/2.
\end{align}
Also, in view of~(\ref{eq:cont-thm-2}.b), there is a $\delta > 0$ such that
\begin{align} \label{eq:cont-thm-11} 
B_{\delta}(x_0) \cap Q \subset \rint C^{G_{n_0}}(x_0).
\end{align}
and therefore, by the constancy of $f_{n_0\#}$  on the relative interior of the cells of $G_{n_0}$ shown in Lemma~\ref{lm:avg-ro}, 
\begin{align} \label{eq:cont-thm-12} 
f_{n_0\#}(x) = f_{n_0\#}(x_0) \qquad (x \in B_{\delta}(x_0) \cap Q).
\end{align}
Combining now~\eqref{eq:cont-thm-10} and~\eqref{eq:cont-thm-12}, we see that
\begin{align} \label{eq:cont-thm-13}
|f_{\#}(x)- f_{\#}(x_0)| \le \eps \qquad (x \in B_{\delta}(x_0) \cap Q).
\end{align}
In other words, $f_{\#}$ is continuous at $x_0$, as desired.
\smallskip

%As a third step, we show that $f_{\mathrm{is}} = f_{\mathrm{si}}$. 
%%
%Indeed, by the first and second step, $f_{\mathrm{is}}$ and $f_{\mathrm{si}}$ both are continuous representatives of the same equivalence class $p_{\sigma}^w(f)$ and therefore must coincide everywhere, as desired.
%\smallskip

(ii) We now move on to the general case $\sigma \in \{-1,0,1\}^d$ and prove the assertion of the theorem in three steps. 
We define the functions $f_{\#}$ for $\# \in \{ \mathrm{is},\mathrm{si}\}$ by
\begin{align} \label{eq:cont-thm-14}
f_{\#}(x) := a_{\hat{\sigma},\#}^{w(\cdot\,\&\dot{x})}\big(f(\cdot\,\&\dot{x})\big)(\hat{x}) \qquad (x = \hat{x}\&\dot{x} \in Q).
\end{align}

As a first step, we observe that $f_{\#}(\cdot\,\&\dot{x})$ for every $\dot{x} \in \dot{Q}$ is a continuous $\hat{\sigma}$-monotonic representative of $p_{\hat{\sigma}}^{w(\cdot\,\&\dot{x})}\big(f(\cdot\,\&\dot{x})\big)$. 
Indeed, this immediately follows from part~(i) of the proof applied to %$f(\cdot\,\&\dot{x}) \in C(\hat{Q},\R)$ and $w(\cdot\,\&\dot{x}) \in C(\hat{Q},(0,\infty))$. 
\begin{align*}
f(\cdot\,\&\dot{x}) \in C(\hat{Q},\R) \qquad \text{and} \qquad w(\cdot\,\&\dot{x}) \in C(\hat{Q},(0,\infty)).
\end{align*}

As a second step, we show that $f_{\#}$ is continuous.
So, let $x,x_n \in Q$ with $x_n \longrightarrow x$ as $n\to\infty$. %$x \in Q$ be fixed and $x_n \in Q$ with $x_n \longrightarrow x$ as $n\to\infty$. 
Since $f$ and $w$ are continuous at $x$, we have
\begin{align} \label{eq:cont-thm-16}
f(\cdot\,\&\dot{x}_n) \underset{\norm{\cdot}_{\infty}}{\longrightarrow} f(\cdot\,\&\dot{x})
\qquad \text{and} \qquad
w(\cdot\,\&\dot{x}_n) \underset{\norm{\cdot}_{\infty}}{\longrightarrow} w(\cdot\,\&\dot{x})
\end{align}  
and therefore, by virtue of Lemma~\ref{lm:difference-of-inf-sup-expressions} and Lemma~\ref{lm:Av_f,w-bounded-and-equicont},
\begin{align} \label{eq:cont-thm-17}
\big| f_{\#}(\hat{x}_n \& \dot{x}_n) - f_{\#}(\hat{x}_n \& \dot{x}) \big|
&\le \sup_{\hat{E} \in \mathcal{Q}_{\hat{Q}}^{>}} \Big| \Av_{f(\cdot\,\&\dot{x}_n), w(\cdot\,\&\dot{x}_n)}(\hat{E})-\Av_{f(\cdot\,\&\dot{x}), w(\cdot\,\&\dot{x})}(\hat{E}) \Big| \notag \\
&\longrightarrow 0 \qquad (n\to\infty).
\end{align}
Since, moreover, $f_{\#}(\cdot\,\&\dot{x})$ is continuous by the first step, we also have
\begin{align} \label{eq:cont-thm-18}
\big| f_{\#}(\hat{x}_n \& \dot{x}) - f_{\#}(\hat{x} \& \dot{x}) \big| 
\longrightarrow 0 \qquad (n \to\infty). 
\end{align}
In view of~\eqref{eq:cont-thm-17} and~\eqref{eq:cont-thm-18}, the asserted  continuity of $f_{\#}$ is now clear. %Combining now~\eqref{eq:cont-thm-17} and~\eqref{eq:cont-thm-18}, we obtain the desired continuity of $f_{\#}$.
\smallskip

As a third step, we show that $f_{\#}$ is a $\sigma$-monotonic representatitive of $p_{\sigma}^w(f)$ and conclude the assertion of the theorem.  
Indeed, $f_{\#}(\cdot\,\&\dot{x})$ is $\hat{\sigma}$-monotonic for every $\dot{x} \in \dot{Q}$ by the first step and therefore $f_{\#}$ is $\sigma$-monotonic by Lemma~\ref{lm:sigma-hat} and thus, taking into account the second step, 
\begin{align}
f_{\#} \in L_{\sigma}^2(Q,\R).
\end{align}
Since, moreover, for every $g \in S_{\sigma}(Q,\R)$ and $\dot{x} \in \dot{Q}$, the function $g(\cdot\,\&\dot{x})$ is $\hat{\sigma}$-monotonic (Lemma~\ref{lm:sigma-hat}) and $f_{\#}(\cdot\,\&\dot{x})$ is a representative of $p_{\hat{\sigma}}^{w(\cdot\,\&\dot{x})}\big(f(\cdot\,\&\dot{x})\big)$ by the first step, we have
\begin{gather}
\scprd{f-f_{\#},f_{\#}-g}_{2,w} = \int_{\dot{Q}} \int_{\hat{Q}} (f(x)-f_{\#}(x))(f_{\#}(x)-g(x)) w(x) \d\hat{x} \d\dot{x} \notag \\
= \int_{\dot{Q}} \big\langle f(\cdot\,\&\dot{x})-f_{\#}(\cdot\,\&\dot{x}),f_{\#}(\cdot\,\&\dot{x})-g(\cdot\,\&\dot{x}) \big\rangle_{2,w(\cdot\,\&\dot{x})} \d\dot{x}
\ge 0 \label{eq:cont-thm-15}
\end{gather}
by virtue of Proposition~\ref{prop:geometr-char-of-p}. As $S_{\sigma}(Q,\R)$ is dense in $L^2_{\sigma}(Q,\R)$ by Theorem~\ref{thm:approx-thm-2-norm}, the inequality~\eqref{eq:cont-thm-15} extends to arbitrary $g \in L^2_{\sigma}(Q,\R)$ and therefore $f_{\#}$ is a representative of $p_{\sigma}^w(f)$ by Proposition~\ref{prop:geometr-char-of-p}, as desired.
Summing up, we now know that $f_{\mathrm{is}}$ and $f_{\mathrm{si}}$ both are continuous $\sigma$-monotonic representatives of $p_{\sigma}^w(f)$ and, as any equivalence class can have at most one continuous representative, the assertion of the theorem follows. 
\end{proof}

In the univariate special case with $\sigma = 1$, we obviously have $\mathcal{L}_{\sigma}^{\mathrm{ro}}(x) = \{ [a,v): v \in (x,b]\} \cup \{ [a,b] \}$ and $\mathcal{U}_{\sigma}^{\mathrm{ro}}(x) = \{ (u,b]: u \in [a,x) \} \cup \{ [a,b] \}$ for every $x \in Q = [a,b]$ and therefore the general averaging formula~\eqref{eq:cont-thm} from the above theorem reduces to the well-known univariate formula
\begin{align} \label{eq:univariate-inf-sup-formula}
\inf_{v \in (x,b]} \sup_{u \in [a,x)} \bigg( \int_u^v fw \d\lambda \bigg) \bigg/ \bigg( \int_u^v w \d\lambda \bigg)
&= f^*_0(x)  \\
&= \sup_{u \in [a,x)} \inf_{v \in (x,b]} \bigg( \int_u^v fw \d\lambda \bigg) \bigg/ \bigg( \int_u^v w \d\lambda \bigg) \notag
\end{align} 
from the literature~\cite{Ma91} (Section~2), \cite{MaMaTuWa01} (Section~6.1) and~\cite{AnSo11} (Lemma~2). 
Incidentally, the formula~\eqref{eq:univariate-inf-sup-formula} also yields a simple alternative proof of the continuity of $f_0^*$ in the univariate special case. %We have only to realize that 
Indeed, by the continuity of $f$ and $w$, the map
\begin{align*}
[a,x) \times (x,b] \ni (u,v) \mapsto \bigg( \int_u^v fw \d\lambda \bigg) \bigg/ \bigg( \int_u^v w \d\lambda \bigg)
\end{align*}
for every $x \in (a,b)$ extends to a continuous -- hence uniformly continuous -- map $\phi: [a,b] \times [a,b] \to \R$ defined by
\begin{align}
\phi(u,u) := f(u) \qquad \text{and} \qquad \phi(u,v) := \bigg( \int_u^v fw \d\lambda \bigg) \bigg/ \bigg( \int_u^v w \d\lambda \bigg)
\end{align}
for $u \in [a,b]$ or $u,v \in [a,b]$ with $u\ne v$, respectively. And from this uniform continuity of $\phi$, in turn, the desired continuity of 
\begin{align}
[a,b] \ni x \mapsto \min_{v\in [x,b]} \max_{u \in [a,x]} \phi(u,v) = \inf_{v\in (x,b]} \sup_{u \in [a,x)} \phi(u,v) = f_0^*(x)
\end{align}
easily follows. 
We close this section with a few remarks on the regularity (degree of differentiability) of monotonic regression functions. While continuity, by the above continuity theorem, is preserved under the monotonic regression operator $p_{\sigma}^w$, higher degrees of regularity are in general not preserved (the method from~\cite{HaHu01}, by contrast, does not have that drawback). %under $p_{\sigma}^w$
Indeed, it is well-known already from the univariate case that the monotonic regression $p_{\sigma}^w(f)$ of a smooth function $f$ with $w \equiv 1$ will have kinks, in general. See, for instance, Figure~1 (lower row) from~\cite{LiDu14}.
We do have, however, that %At least, however, %Yet, 
monotonic regression functions $p_{\sigma}^w(f)$ can at least be approximated arbitrarily well by smooth $\sigma$-monotonic functions. This is because standard mollification essentially respects $\sigma$-monotonicity.  In the following, for a rectangular set $X = \bigtimes_{i=1}^d [a_i',b_i']$ with $a_i' < b_i'$, we use the abbreviations
\begin{align*}
C_{\sigma}^m(X,\R) := \{f \in C^m(X,\R): f \text{ is $\sigma$-monotonic} \} 
\quad \text{and} \quad
X^{-\delta} := \bigtimes_{i=1}^d [a_i'+\delta,b_i'-\delta]
\end{align*} 
for $m \in \N\cup\{0,\infty\}$ and $\delta \in \R$ such that $a_i'+\delta < b_i'-\delta$ for all $i\in\{1,\dots,d\}$.

\begin{cor} \label{cor:C_sigma^infty-dense-in-L^2_sigma}
\begin{itemize}
\item[(i)] If $f \in C(Q,\R)$ and $w \in C(Q,(0,\infty))$, then $p_{\sigma}^w(f)|_{Q^{-r}}$ for every $0 < r < \min_{i\in\{1,\dots,d\}} (b_i-a_i)/2$ can be approximated arbitrarily well w.r.t.~$\norm{\cdot}_{\infty}$ by functions from $C_{\sigma}^{\infty}(Q^{-r},\R)$. 
\item[(ii)] If $f \in L^2(Q,\R)$ and $w \in C(Q,(0,\infty))$, then $p_{\sigma}^w(f)$  can be approximated arbitrarily well w.r.t.~$\norm{\cdot}_{2}$ by functions from $C_{\sigma}^{\infty}(Q,\R)$. 
In particular, $C_{\sigma}^{\infty}(Q,\R)$ is dense in $L^2_{\sigma}(Q,\R)$ w.r.t.~$\norm{\cdot}_2$. 
\end{itemize}
\end{cor}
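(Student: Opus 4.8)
The plan is to derive both parts from a single observation---that convolution with a non-negative mollifier preserves $\sigma$-monotonicity---and to deal with the loss of domain near $\partial Q$ by an order-preserving affine contraction of $Q$ onto $Q^{-r}$. For part~(i) I would start from the unique continuous $\sigma$-monotonic representative $f_0^*$ of $p_{\sigma}^w(f)$ furnished by Theorem~\ref{thm:continuity-thm}. Fix a standard mollifier $\rho\in C_c^{\infty}(\R^d)$ with $\rho\ge 0$, $\int_{\R^d}\rho\,\d\lambda=1$ and $\supp\rho\subset B_1(0)$, and set $\rho_{\delta}:=\delta^{-d}\rho(\cdot/\delta)$, so that $\supp\rho_{\delta}\subset B_{\delta}(0)$. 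For $\delta\le r$ define $F_{\delta}(x):=\int_{B_{\delta}(0)} f_0^*(x-z)\,\rho_{\delta}(z)\,\d z$ on $Q^{-r}$; this is meaningful because $B_{\delta}(x)\subset Q$ for $x\in Q^{-r}$. Three routine verifications then close part~(i): $F_{\delta}\in C^{\infty}(Q^{-r})$, seen by rewriting $F_{\delta}$ as the convolution $(f_0^*\chi_Q)*\rho_{\delta}$ and differentiating under the integral sign onto the smooth factor $\rho_{\delta}$; $F_{\delta}$ is $\sigma$-monotonic, because $x\le_{\sigma}y$ forces $x-z\le_{\sigma}y-z$ for every $z$ (the $\hat{I}$-coordinates are scaled by a positive factor and the $\dot{I}$-coordinates are left unchanged), so the isotonicity of $f_0^*$ survives integration against the non-negative kernel $\rho_{\delta}$; and $F_{\delta}\longrightarrow f_0^*$ uniformly on $Q^{-r}$ as $\delta\to 0$, by the uniform continuity of $f_0^*$ on the compact set $Q$.

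For part~(ii) I would first reduce to a continuous integrand. Choosing continuous $f_k\longrightarrow f$ in $\norm{\cdot}_2$ (density of $C(Q,\R)$ in $L^2(Q,\R)$) and noting $w\in C(Q,(0,\infty))\subset L^{\infty}(Q,[\ul{c},\infty))$, Lemma~\ref{lm:p^w(f)-contin-in-w,f} gives $p_{\sigma}^w(f_k)\longrightarrow p_{\sigma}^w(f)$ in $\norm{\cdot}_2$, while Theorem~\ref{thm:continuity-thm} guarantees that each $g_k:=p_{\sigma}^w(f_k)$ has a continuous $\sigma$-monotonic representative. It therefore suffices to approximate an arbitrary continuous $\sigma$-monotonic $g$ on $Q$ in $\norm{\cdot}_2$ by elements of $C_{\sigma}^{\infty}(Q,\R)$; a diagonal choice of approximants then settles the first claim of~(ii), and the density statement is the special case $f=g\in L^2_{\sigma}(Q,\R)$ with $w\equiv 1$, where $p_{\sigma}^1(g)=g$ by Proposition~\ref{prop:elem-properties-of-p}(i).

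To carry out this last approximation I would use the affine bijection $T_r:Q\to Q^{-r}$ that contracts each edge $[a_i,b_i]$ onto $[a_i+r,b_i-r]$. Each coordinate of $T_r$ has positive slope, hence $T_r$ preserves $\le_{\sigma}$; consequently $g\circ T_r$ is again continuous and $\sigma$-monotonic, and $g\circ T_r\longrightarrow g$ uniformly---hence in $\norm{\cdot}_2$---as $r\to 0$. Given $\eps>0$ I would pick $r$ with $\norm{g\circ T_r-g}_2<\eps/2$, apply part~(i) to $g$ (with $w\equiv 1$) to obtain $h\in C_{\sigma}^{\infty}(Q^{-r},\R)$ with $\norm{h-g}_{\infty}$ small on $Q^{-r}$, and then transplant it as $h\circ T_r\in C_{\sigma}^{\infty}(Q,\R)$, which is again smooth and $\sigma$-monotonic and satisfies $\norm{h\circ T_r-g\circ T_r}_{\infty}=\norm{h-g}_{\infty}$ over $Q^{-r}$, so that $\norm{h\circ T_r-g}_2<\eps$.

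The main obstacle is precisely the boundary behaviour: mollification only yields a smooth $\sigma$-monotonic function on the shrunken box $Q^{-r}$, so part~(i) is inherently an interior statement and cannot be upgraded directly to all of $Q$. The order-preserving contraction $T_r$ is the device that circumvents this for the global $L^2$-statement, transplanting an interior smooth approximant back onto $Q$ at the cost only of the error $\norm{g\circ T_r-g}_2$, which uniform continuity keeps small. The one genuinely $\sigma$-specific point to watch throughout is the pair of monotonicity-preservation checks $x\le_{\sigma}y\Rightarrow x-z\le_{\sigma}y-z$ and $x\le_{\sigma}y\Rightarrow T_rx\le_{\sigma}T_ry$, which is exactly where the equality constraint on $\dot{I}$ and the sign-weighted inequality on $\hat{I}$ in the definition of $\le_{\sigma}$ are used.
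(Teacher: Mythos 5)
Your proof is correct, and while part~(i) coincides with the paper's own argument -- mollify the continuous $\sigma$-monotonic representative furnished by Theorem~\ref{thm:continuity-thm}, using non-negativity of the kernel for isotonicity and uniform continuity for uniform convergence on the compact set $Q^{-r}$ -- your part~(ii) takes a genuinely different route around the boundary loss. The paper goes \emph{outward}: it picks a continuous $f_0$ with $(\ol{c}/\ul{c})\norm{f_0-f}_2\le\eps/3$, extends the continuous representative $f_0^*$ of $p_{\sigma}^w(f_0)$ to the enlarged box $Q^{1/n}$ by the constant extension $f_0^*\circ p_Q$ along the metric projection onto $Q$ (verifying that this extension stays continuous and $\sigma$-monotonic), mollifies that extension so that the result is already in $C_{\sigma}^{\infty}(Q,\R)$, and controls the error by Young's inequality together with the $\norm{\cdot}_{2,w}$-contractivity of $p_{\sigma}^w$ from Proposition~\ref{prop:properties-of-p}. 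You instead go \emph{inward}: you keep the interior approximant of part~(i) on $Q^{-r}$ and transplant it back to $Q$ through the order-preserving affine contraction $T_r$, paying only the additional error $\norm{g\circ T_r-g}$, which uniform continuity controls; your reduction to continuous targets via Lemma~\ref{lm:p^w(f)-contin-in-w,f} (with the constant weight sequence $w_n=w$) plays exactly the role of the paper's appeal to contractivity. Both devices are sound, and the required checks on your side -- that the shift $x\mapsto x-z$ and the map $T_r$ preserve $\le_{\sigma}$, and that $h\circ T_r$ is smooth and $\sigma$-monotonic with $\sup_{x\in Q}|h(T_rx)-g(T_rx)|=\sup_{y\in Q^{-r}}|h(y)-g(y)|$ -- are all elementary and correct. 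Your variant is arguably more elementary (no Young's inequality, no verification that the projection extension is $\sigma$-monotonic) and in fact yields a slightly stronger by-product: every continuous $\sigma$-monotonic function on $Q$ is a \emph{uniform} limit, not merely a $\norm{\cdot}_2$-limit, of functions in $C_{\sigma}^{\infty}(Q,\R)$, since both of your error terms are sup-norm small. One cosmetic slip: your parenthetical justification that translation preserves $\le_{\sigma}$ speaks of coordinates ``scaled by a positive factor,'' which describes $T_r$ rather than the shift $x\mapsto x-z$; the fact itself (subtracting $\sigma_iz_i$ from both sides of $\sigma_ix_i\le\sigma_iy_i$, with equality on the $\dot{I}$-coordinates untouched) is of course trivially true and does not affect the argument.
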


\begin{proof}
We begin with a preparatory step showing that standard mollification essentially respects $\sigma$-monotonicity: more precisely, we show that if $g \in C_{\sigma}(X,\R)$ on a rectangular domain $X = \bigtimes_{i=1}^d [a_i',b_i']$ with $a_i' < b_i'$, then 
\begin{align}
\big( j_{\delta} * g \big)|_{X^{-\delta}} \in C_{\sigma}^{\infty}(X^{-\delta},\R)
\end{align}
for every $0 < \delta < \min_{i\in\{1,\dots,d\}} (b_i'-a_i')/2$. 
As usual, the convolution $j_{\delta}*g$ is given by
\begin{align}
\big( j_{\delta} * g \big)(x) := \int_{\R^d} j_{\delta}(x-y) \tilde{g}(y) \d y 
&= \int_{\R^d} j_{\delta}(y) \tilde{g}(x-y) \d y \notag \\ 
&= \int_{B_{\delta}(0)} j_{\delta}(y) \tilde{g}(x-y) \d y 
\label{eq:convolution-def}
\end{align}
for every $x \in \R^d$, where $\tilde{g}$ is the zero extension of $g$ from its domain $X$ to the whole of $\R^d$ and $j_{\delta}$ is the mollifier defined by $j_{\delta}(x) := (1/\delta)^{d} j(x/\delta)$ with a function $j \in C^{\infty}(\R^d,\R)$ such that
\begin{align}
\supp j \subset B_1(0) \qquad \text{and} \qquad j \ge 0 \qquad \text{and} \qquad \int_{\R^d} j(x) \d x = 1.  
\end{align}
%As usual, 
%$j_{\delta}$ is the mollifier defined by $j_{\delta}(x) := (1/\delta)^{d} j(x/\delta)$ with a function $j \in C^{\infty}(\R^d,\R)$ such that
%\begin{align}
%j \ge 0 \qquad \text{and} \qquad \supp j \subset B_1(0) \qquad \text{and} \qquad \int_{\R^d} j(x) \d x = 1.  
%\end{align}
%Also, the convolution $(j_{\delta}*g)(x)$ is defined for every $x \in \R^d$ by
%\begin{align*}
%\big( j_{\delta} * g \big)(x) := \int_{\R^d} j_{\delta}(x-y) \tilde{g}(y) \d y = \int_{\R^d} j_{\delta}(y) \tilde{g}(x-y) \d y 
%= \int_{B_{\delta}(0)} j_{\delta}(y) \tilde{g}(x-y) \d y, 
%\end{align*}
%where $\tilde{g}$ is the zero extension of $g$ from $X$ to the whole of $\R^d$.
It is well-known that $j_{\delta}*g \in C^{\infty}(\R^d,\R)$ and it thus remains to show that $( j_{\delta} * g )|_{X^{-\delta}}$ is $\sigma$-monotonic. So, let $x,x' \in X^{-\delta}$ with $0 < \delta < \min_{i\in\{1,\dots,d\}} (b_i'-a_i')/2$. We then have, of course,
\begin{align}
x-y, x'-y \in X \qquad (y \in B_{\delta}(0)).
\end{align}
So, by the assumed $\sigma$-monotonicity of $g$ on $X$ and by $j_{\delta} \ge 0$, it follows that 
\begin{align*}
j_{\delta}(y) \tilde{g}(x-y) = j_{\delta}(y) g(x-y) \le j_{\delta}(y) g(x'-y) = j_{\delta}(y) \tilde{g}(x'-y)
\qquad (y \in B_{\delta}(0))
\end{align*}
and therefore $( j_{\delta} * g )(x) \le ( j_{\delta} * g )(x')$ by virtue of~\eqref{eq:convolution-def}, as desired. With these preparatory considerations, the assertions~(i) and (ii) are now easy to prove.
\smallskip

(i) Suppose $f \in C(Q,\R)$ and $w \in C(Q,(0,\infty))$, write $f^* := p_{\sigma}^w(f)$ and let $0 < r < \min_{i\in\{1,\dots,d\}} (b_i-a_i)/2$. In view of the continuity theorem (Theorem~\ref{thm:continuity-thm}), $f^*$ has a unique representative $f_0^* \in C_{\sigma}(Q,\R)$ and therefore, by our preparatory step,
\begin{align}
\big( j_{1/n} * f_0^*)|_{Q^{-1/n}} \in C_{\sigma}^{\infty}(Q^{-1/n},\R)
\end{align}
for all $n \in \N$ with $1/n < \min_{i\in\{1,\dots,d\}} (b_i-a_i)/2$. Since $Q^{-r} \subset Q^{-1/n}$ for $1/n \le r$, we have in particular
\begin{align} \label{eq:C_sigma^infty-dense-(i),1}
\big( j_{1/n} * f_0^*)|_{Q^{-r}} \in C_{\sigma}^{\infty}(Q^{-r},\R)
\end{align}
for all $n \in \N$ with $1/n \le r$. Since, moreover, $Q^{-r}$ is a compact subset of $\operatorname{int} Q$, we further have
\begin{align} \label{eq:C_sigma^infty-dense-(i),2}
\norm{ \big( j_{1/n} * f_0^*)|_{Q^{-r}} - f^*|_{Q^{-r}} }_{\infty} = \sup_{x \in Q^{-r}} \big| \big( j_{1/n} * f_0^*)(x) - f_0^*(x) \big| 
\longrightarrow 0 
%\qquad (n \to \infty).
\end{align}
as $n \to \infty$. In view of~\eqref{eq:C_sigma^infty-dense-(i),1} and~\eqref{eq:C_sigma^infty-dense-(i),2}, the assertion~(i) is now clear. 
\smallskip

(ii) Suppose $f \in L^2(Q,\R)$ and $w \in C(Q,(0,\infty))$, write $f^* := p_{\sigma}^w(f)$ and let $\eps >0$. Choose $f_0 \in C(Q,\R)$ such that
\begin{align} \label{eq:C_sigma^infty-dense-(ii),1}
(\ol{c}/\ul{c}) \norm{f_0-f}_2 \le \eps/3,
\end{align} 
where $\ul{c} := \min_{x\in Q}w(x)$ and $\ol{c} := \max_{x \in Q} w(x)$. 
In view of the continuity theorem (Theorem~\ref{thm:continuity-thm}), $p_{\sigma}^w(f_0)$ has a unique representative $f_0^* \in C_{\sigma}(Q,\R)$ and it is easily verified that the constant extension of $f_0^*$ from $Q$ to the whole of $\R^d$ is continuous and $\sigma$-monotonic as well. In short, $f_0^* \circ p_Q \in C_{\sigma}(\R^d,\R)$, where $p_Q: \R^d \to \Q$ is the projection onto $Q$ defined by $(p_Q(x))_i = x_i$ in case $x_i \in [a_i,b_i]$ and 
\begin{align*}
\big(p_Q(x)\big)_i := a_i \qquad (x_i \in (-\infty,a_i)) 
\qquad \text{and} \qquad 
\big(p_Q(x)\big)_i := b_i \qquad (x_i \in (b_i,\infty))
\end{align*}
%\begin{align}
%\big(p_Q(x)\big)_i := 
%\begin{cases}
%a_i, \quad x_i \in (-\infty,a_i), \\
%x_i, \quad x_i \in [a_i,b_i], \\
%b_i, \quad x_i \in (b_i,\infty)
%\end{cases}
%\end{align}  
for $i \in \{1,\dots,d\}$ and $x \in \R^d$. In particular, $g_n := (f_0^* \circ p_Q)|_{Q^{1/n}} \in C_{\sigma}(Q^{1/n},\R)$ and therefore, by our preparatory step, 
\begin{align} \label{eq:C_sigma^infty-dense-(ii),2}
\big( j_{1/n} * g_n \big)|_Q \in C_{\sigma}^{\infty}(Q,\R). 
\end{align} 
With the help of Young's inequality and the fact that the zero extensions $\tilde{g}_n, \tilde{f}_0^*$ of $g_n$ and $f_0^*$ beyond their respective domains $Q^{1/n}$ and $Q$ differ at most on $Q^{1/n} \setminus Q$, we find
\begin{align}
\norm{ \big( j_{1/n} * g_n \big)|_Q - \big( j_{1/n} * f_0^* \big)|_Q }_2
&\le \norm{ j_{1/n} * g_n  - j_{1/n} * f_0^* }_2 = \big\| j_{1/n} * (\tilde{g}_n - \tilde{f_0^*}) \big\|_2 \notag \\
&\le \| \tilde{g}_n - \tilde{f_0^*} \|_2
\le \norm{f_0^*}_{\infty} \cdot \lambda(Q^{1/n}\setminus Q)^{1/2}
\end{align}
for every $n \in \N$, and with the help of the $\norm{\cdot}_{2,w}$-contractivity of $p_{\sigma}^w$ (Proposition~\ref{prop:properties-of-p}) and~\eqref{eq:C_sigma^infty-dense-(ii),1}, we find
\begin{align}
\norm{f_0^* - f^*}_2 \le (1/\ul{c}) \norm{f_0^*-f^*}_{2,w} \le (1/\ul{c}) \norm{f_0-f}_{2,w} \le (\ol{c}/\ul{c}) \norm{f_0-f}_2 \le \eps/3
\end{align}
for every $n \in \N$. 
Consequently, for $n$ large enough, we have
\begin{align} \label{eq:C_sigma^infty-dense-(ii),3}
\norm{ \big( j_{1/n} * g_n \big)|_Q - f^*}_2 
&\le \norm{f_0^*}_{\infty} \cdot \lambda(Q^{1/n}\setminus Q)^{1/2} + \norm{\big( j_{1/n} * f_0^* \big)|_Q  - f_0^*}_2 + \eps/3 \notag \\
&\le \eps. 
\end{align}
In view of~\eqref{eq:C_sigma^infty-dense-(ii),2} and~\eqref{eq:C_sigma^infty-dense-(ii),3}, the assertion~(ii) is now clear. 
\end{proof}

\section*{Acknowledgment}

I would like to thank Martin von Kurnatowski and Jan Schwientek as well as J\"urgen Franke, Patrick Link, Anke Stoll, and Rebekka Zache for interesting and inspiring discussions about incorporating monotonicity knowledge into machine-learning models. %the monotonization of machine-learning models. %Additionally, I would like thank J\"urgen Franke for pointing out reference~\cite{LiDu14} to me. 

\begin{small}

\end{small}

\end{document}